\newtoks\prt
\numberwithin{equation}{section}
\newtheorem{thm}{Theorem}[section]
\newtheorem{lemma}[thm]{Lemma}
\newtheorem{prop}[thm]{Proposition}
\theoremstyle{definition}
\newtheorem{definition}[thm]{Definition}
\newtheorem{remarks}[thm]{Remarks}
\def\eqn#1$$#2$${\begin{equation}\label#1#2\end{equation}}
\def\fra{\mathfrak{A}}
\def\A{\mathcal A}
\def\B{\mathcal B}
\def\C{\mathcal C}
\def\E{\mathcal E}
\def\e{e^\ast}
\def\s{u^\ast}
\def\t{v^\ast}
\def\E{E^\ast}
\def\H{\mathcal H}
\def\M{\mathcal M}
\def\ce{\mathbb C}
\def\lin{Lindel\"of}
\def\ep{\varepsilon}
\def\en{\mathbb N}
\def\ef{\mathbb F}
\def\er{\mathbb R}
\def\ov{\overline}
\def \Ch {\operatorname{Ch}}
\def \ext {\operatorname{ext}}
\def\wh{\widehat}
\def \reg {\partial _{\kern1pt\text{reg}}}
\def\la{\langle}
\def\ra{\rangle}
\def\re{\Re}
\def\di{\,\mbox{\rm d}}
\newcommand{\norm}[1]{\left\|#1\right\|}
\newcommand{\abs}[1]{\left| #1  \right|}
\renewcommand{\Re}{\operatorname{Re}}
\begin{document}

\title[Isomorphisms of subspaces of vector-valued continuous functions]{Isomorphisms of subspaces of vector-valued continuous functions}

\author{Jakub Rondo\v s and Ji\v r\'\i\ Spurn\'y}

\address{Charles University\\
Faculty of Mathematics and Physics\\
Department of Mathematical Analysis \\
Sokolovsk\'{a} 83, 186 \ 75\\Praha 8, Czech Republic}

\email{spurny@karlin.mff.cuni.cz}
\email{jakub.rondos@gmail.com}

\subjclass[2010]{47B38; 46A55}

\keywords{function space; vector-valued Banach-Stone theorem; Amir-Cambern theorem}

\thanks{The research was supported by the Research grant  GA\v{C}R 17-00941S}

\begin{abstract}
We deal with isomorphic Banach-Stone type theorems for closed subspaces of vector-valued continuous functions. Let $\ef=\er$ or $\ce$. For $i=1,2$, let $E_i$ be a reflexive Banach space over $\ef$ with a certain parameter $\lambda(E_i)>1$, which in the real case coincides with the Schaffer constant of $E_i$, let $K_i$ be a locally compact (Hausdorff) topological space and let $\H_i$ be a closed subspace of $\C_0(K_i, E_i)$ such that each point of the Choquet boundary $\Ch_{\H_i} K_i$ of $\H_i$ is a weak peak point. We show that if there exists an isomorphism $T\colon \H_1\to \H_2$ with $\norm{T}\cdot \norm{T^{-1}}<\min \lbrace \lambda(E_1), \lambda(E_2) \rbrace$, then $\Ch_{\H_1} K_1$ is homeomorphic to $\Ch_{\H_2} K_2$. Next we provide an analogous version of the weak vector-valued Banach-Stone theorem for subspaces, where the target spaces do not contain an isomorphic copy of $c_0$.
\end{abstract}

\maketitle

%%%%%%%%%%%%%%%%%%%%%%%%%%%%%%%%%%%%%%%%%%%%%%%%%%%

\section{Introduction}
We work within the framework of real or complex vector spaces and write $\ef$ for the respective field $\er$ or $\ce$. 
If $E$ is a Banach space then $E^*$ stands for its dual space. We denote by $B_E$ and $S_E$ the unit ball and sphere in $E$, respectively, and we write $\la\cdot,\cdot\ra\colon E^*\times E\to\ef$ for the duality mapping. For a locally compact (Hausdorff) space $K$, let $\C_0(K,E)$ denote the space of all continuous $E$-valued functions vanishing at infinity. We consider this space endowed with the sup-norm
\[\norm{f}_{\sup}=\sup_{x \in K} \norm{f(x)}, \quad f \in \C_0(K,E).\]
 If $K$ is compact, then this space will be denoted by $\C(K,E)$. For a compact space $K$, we identify the dual space $(\C(K,E))^*$ with the space $\M(K,\E)$ of all $\E$-valued Radon measures on $K$ endowed with the variation norm via Singer's theorem (see \cite[p.192]{Singer}). Thus $\M(K, \ef)$ is the usual set of Radon measures on $K$. Unless otherwise stated, we consider $\M(K,\E)$ endowed with the weak$^*$ topology given by this duality.

Our starting point is the classical Banach-Stone theorem which asserts that, given a pair of compact spaces $K$ and $L$, they are homeomorphic provided $\C(K,\ef)$ is isometric to $\C(L,\ef)$ (see \cite[Theorem 3.117]{faspol}).

The first direction of our research are the so called isomorphic Banach-Stone type theorems, where the assumption of the isometry between the spaces of continuous functions is replaced by an isomorphism $T:\C(K,\ef) \rightarrow \C(L,\ef)$ with $\norm{T}\cdot \norm{T^{-1}}$ being small.

A remarkable generalization of the Banach-Stone theorem in this way was given independently by Amir \cite{amir} and Cambern \cite{cambern}. They showed that compact spaces $K$ and $L$ are homeomorphic if there exists an isomorphism $T\colon \C(K,\ef)\to\C(L,\ef)$ with $\norm{T}\cdot \norm{T^{-1}}<2$. Alternative proofs were given by Cohen \cite{cohen} and Drewnowski \cite{drewnow}.

The latest result in the direction of the Amir-Cambern theorem is due to E.M. Galego and A.L.~Porto da Silva in \cite{galego-silva} who proved the following theorem. If $T$ is a function from $\C_0(K,\er)$ to $\C_0(S,\er)$, $T(0)=0$, and both $T$ and $T^{-1}$ are bijective coarse $(M,1)$-quasi-isometries with $M<\sqrt{2}$, then  $K$ and $S$ are homeomorphic and  there exists a homeomorphism $\phi$ from $S$ to $K$ and a continuous function $\lambda\colon S\to \{-1,1\}$ such that for all $s\in S$ and $f\in \C_0(K,\er)$ one has
\[
\norm{MTf(s)-\lambda(s)f(\phi(s))}\leq (M^2-1)\norm{f}+\delta,
\]
where $\delta$  does not depend on $f$ or $s$.

A generalization of the theorem of Amir and Cambern to the context of affine continuous functions on compact convex sets was given by Chu and Cohen in \cite{cc}. In order to explain their results we need a bit of terminology.
By a compact convex set we mean a compact convex subset of a locally convex (Hausdorff) space. Let $\fra(X,\ef)$ be the space of all continuous $\ef$-valued affine functions on a compact convex set $X$ endowed with the sup-norm. 
Let $\M^+(X)$ and $\M^1(X)$ stand for the sets of nonnegative and probability Radon measures on $X$, respectively. For any $\mu\in\M^1(X)$ there exists a unique point $r(\mu)\in X$ such that $\mu(a)=a(r(\mu))$, $a\in \fra(X,\ef)$, see \cite[Proposition~I.2.1]{alfsen}.
We call $r(\mu)$ the \emph{barycenter} of $\mu$, or alternatively, we say that $\mu$ \emph{represents} the point $r(\mu)$.
If $\mu,\nu\in\M^+(X)$, then $\mu\prec \nu$ if $\mu(k)\le \nu(k)$ for each convex continuous function $k$ on $X$. A measure $\mu\in\M^+(X)$ is \emph{maximal} if $\mu$ is $\prec$-maximal.

By the Choquet--Bishop--de-Leeuw representation theorem (see \cite[Theorem~I.4.8]{alfsen}), for each $x\in X$ there exists a maximal measure $\mu\in\M^1(X)$ with $r(\mu)=x$. If this measure is uniquely determined for each $x\in X$, the set $X$ is called a \emph{simplex}. It is called a \emph{Bauer simplex} if, moreover, the set $\ext X$ of extreme points of $X$ is closed. In this case, the space $\fra(X,\ef)$ is isometric to the space $\C(\ext X,\ef)$ (see \cite[Theorem~II.4.3]{alfsen}).
On the other hand, given a space $\C(K,\ef)$, it is isometric to the space $\fra(\M^1(K),\ef)$, see (\cite[Corollary~II.4.2]{alfsen}).

A reformulation of the result of Amir and Cambern for simplices reads as follows: Given Bauer simplices $X$ and $Y$, the sets $\ext X$ and $\ext Y$ are homeomorphic, provided there exists an isomorphism $T\colon \fra(X,\ef)\to\fra(Y,\ef)$ with $\norm{T}\cdot \norm{T^{-1}}<2$.

The aforementioned  Chu and Cohen proved in \cite{cc} that for  compact convex sets $X$ and $Y$, the sets $\ext X$ and $\ext Y$ are homeomorphic provided there exists an isomorphism $T\colon \fra(X,\er)\to\fra(Y,\er)$ with $\norm{T}\cdot \norm{T^{-1}}<2$ and
one of the following conditions hold:

(i) $X$ and $Y$ are  simplices such that their extreme points are weak peak points;

(ii) $X$ and $Y$ are metrizable and their extreme points are weak peak points;

(iii) $\ext X$ and $\ext Y$ are closed and extreme points of $X$ and $Y$ are split faces.

A point $x\in X$ is a \emph{weak peak point} if given $\ep\in (0,1)$ and an open set $U\subset X$ containing $x$, there exists $a$ in $B_{\fra(X,\er)}$ such that $\abs{a}<\ep$ on $\ext X\setminus U$ and $a(x)>1-\ep$, see \cite[p. 73]{cc}.

In \cite{lusppams}, it was showed that extreme points of $X$ and $Y$ are homeomorphic, provided there exists an isomorphism $T\colon \fra(X,\er)\to\fra(Y,\er)$ with $\norm{T}\cdot \norm{T^{-1}}<2$, extreme points are weak peak points and both $\ext X$ and $\ext Y$ are  \lin\ sets.

In \cite{dosp} the same result is proved without the assumption of the \lin\ property and paper \cite{rondos-spurny} provides an analogous result for the case of complex functions.
It turns out that this result is in a sense optimal since the bound $2$ cannot be improved (see \cite{cohen-bound2}, where a pair of nonhomeomorphic compact spaces $K_1,K_2$ for which there exists an isomorphism $T\colon \C(K_1,\er)\to \C(K_2, \er)$ with $\norm{T}\cdot\norm{T^{-1}}=2$ is constructed) and the assumption on weak peak points cannot be omitted (see \cite{hess}, where the author constructs for each $\ep\in (0,1)$ a pair of simplices $X_1,X_2$ such that $\ext X_1$ is not homeomorphic to $\ext X_2$ but there is an isomorphism $T\colon \fra(X_1,\er)\to \fra(X_2,\er)$ with $\norm{T}\cdot \norm{T^{-1}}<1+\ep$).

In \cite{rondos-spurny-spaces}, we have generalized the previous result by showing that for closed subspaces $\H_i \subset \C_0(K_i,\ef)$ for $i=1,2$, their Choquet boundaries are homeomorphic provided points in the Choquet boundaries are weak peak points and there exists an isomorphism $T\colon \H_1\to \H_2$ with $\norm{T}\cdot \norm{T^{-1}}<2$.  We recall that $x\in K_i$ is a \emph{weak peak point} (with respect to $\H_i$) if for a given $\ep\in (0,1)$ and a neighborhood $U$ of $x$ there exists a function $h\in B_{\H_i}$ such that $h(x)>1-\ep$ and $\abs{h}<\ep$ on $\Ch_{\H_i} K_i\setminus U$.

The first vector-valued version of the isomorphic Banach-Stone theorem is due to Cambern \cite{cambern-illinois}, who proved that if $E$ is a finite-dimensional Hilbert space and $\C_0(K_1, E)$ is isomorphic to $\C_0(K_2, E)$ by an isomorphism $T$ satisfying $\norm{T}\cdot\norm{T^{-1}}<\sqrt{2}$, then the locally compact spaces $K_1$ and $K_2$ are homeomorphic. 

Later in \cite{cambern-pacific}, Cambern proved the first result in the spirit of isomorphic vector-valued Banach-Stone theorem for infinite-dimensional Banach spaces. He showed that if $K_1$ and $K_2$ are compact spaces, $E$ is a uniformly convex Banach space and $T: \C(K_1, E) \rightarrow \C(K_2, E)$ is an isomorphism satisfying $\norm{T}\cdot\norm{T^{-1}}<(1-\delta(1))^{-1}$, then $K_1$ and $K_2$ are homeomorphic. Here $\delta:[0, 2] \rightarrow [0,1]$ denotes the modulus of convexity of $E$.  

Since then, there have been improvements in this area proved e.g. in \cite{behrends-cambern},  \cite{behrends-pacific} and \cite{jarosz-pacific}. 

Many of those results were recently unified and strengthened in \cite{cidralgalegovillamizar}, where it was showed that if $E$ is a real or complex reflexive Banach space with $\lambda(E)>1$, then for all locally compact spaces $K_1, K_2$, the existence of an isomorphism $T:\C_0(K_1, E) \rightarrow \C_0(K_2, E)$ with $\norm{T}\cdot\norm{T^{-1}}<\lambda(E)$ implies that the spaces $K_1, K_2$ are homeomorphic. Here

\[\lambda(E)=\inf \lbrace \max \lbrace \Vert e_1+\lambda e_2 \Vert: \lambda \in \ef, \abs{\lambda}=1 \rbrace: e_1, e_2 \in S_E \rbrace\]
is a parameter introduced by Jarosz in \cite{jarosz-pacific}.

It is easy to check that $\lambda(\ef)=2$, thus this result recovers the theorem of Amir and Cambern. The authors of \cite{cidralgalegovillamizar} also showed that the constant $\lambda(E)=2^{\frac{1}{p}}$ is the best possible for $E=l_p$, where $2 \leq p <\infty$. 

The properties of the parameter $\lambda$ and its relation to various other parameters of Banach spaces were described comprehensively in \cite{cidralgalegovillamizar}. Here we just mention that for a real Banach space $E$, the parameter $\lambda(E)$ is called the Schaffer constant of $E$, and the fact that $\lambda(E)>1$ implies that $E$ is reflexive (see \cite[Proposition 1 and Theorem 2]{KaMaTa} and \cite[Theorem 1.1]{james}). Also we will frequently use the fact that $1 \leq \lambda(E) \leq 2$ for each Banach space $E$.

The first main result of this paper is a generalization of the previous result in the following way.

\begin{thm}
	\label{main}
	For $i=1,2$, let $\H_i$ be a closed subspace of $\C_0(K_i, E_i)$ for some locally compact space $K_i$ and a reflexive Banach space $E_i$ over the same field $\ef$ satisfying $\lambda(E_i)>1$. Assume that each point of the Choquet boundary $\Ch_{\H_i} K_i$ of $\H_i$ is a weak peak point and let $T\colon \H_1\to \H_2$ be an isomorphism satisfying $\norm{T}\cdot\norm{T^{-1}}<\min \lbrace \lambda(E_1), \lambda(E_2) \rbrace$.
	Then $\Ch_{\H_1} K_1$ is homeomorphic to $\Ch_{\H_2} K_2$.
\end{thm}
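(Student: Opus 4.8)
The plan is to reconstruct the homeomorphism directly from the transpose $T^*\colon\H_2^*\to\H_1^*$ by tracking evaluation functionals, combining the scalar subspace argument of \cite{rondos-spurny-spaces} with the vector parameter $\lambda$ of \cite{cidralgalegovillamizar}. After rescaling I may assume $\norm{T}\le 1$ and $\norm{T^{-1}}=\rho$ with $\rho<\min\{\lambda(E_1),\lambda(E_2)\}$, so that $\rho^{-1}\norm{h}\le\norm{Th}\le\norm{h}$ for every $h$. For $i=1,2$, $x\in\Ch_{\H_i}K_i$ and $u^*\in S_{E_i^*}$, I introduce the evaluation functional $\phi^i_{x,u^*}\in\H_i^*$, $\phi^i_{x,u^*}(h)=\la u^*,h(x)\ra$. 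Reflexivity of $E_i$ guarantees that $u^*$ attains its norm at some $e\in S_{E_i}$, and the weak peak point property, combined with this norm attainment, produces for each neighbourhood $U$ of $x$ and each $\ep\in(0,1)$ a function $h\in B_{\H_i}$ peaked at $x$ in a direction approximating $e$ and with $\norm{h}<\ep$ on $\Ch_{\H_i}K_i\setminus U$. In particular $\norm{\phi^i_{x,u^*}}=1$, and these functionals play the role of point evaluations.

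The core of the argument is a \emph{concentration lemma}: for every $y\in\Ch_{\H_2}K_2$ and $u^*\in S_{E_2^*}$ the functional $T^*\phi^2_{y,u^*}$ is carried by a single point of $\Ch_{\H_1}K_1$. I would extend $T^*\phi^2_{y,u^*}$ by Hahn--Banach to $\C_0(K_1,E_1)$ and represent it, via Singer's theorem, by an $E_1^*$-valued boundary measure $\mu$ on $K_1$; the weak peak point hypothesis confines the relevant mass to $\overline{\Ch_{\H_1}K_1}$. To see that $\mu$ cannot split between two distinct boundary points $x_1\ne x_2$, I build, using the weak peak property at $x_1$ and $x_2$, functions peaking there in prescribed directions $e_1,e_2\in S_{E_1}$; choosing the phase $\lambda$, $\abs{\lambda}=1$, for which $\norm{e_1+\lambda e_2}\ge\lambda(E_1)$ (possible by the very definition of $\lambda(E_1)$ as an infimum over unit pairs) and testing the sandwich inequality $\rho^{-1}\norm{h}\le\norm{Th}\le\norm{h}$ forces $\rho\ge\lambda(E_1)$, a contradiction. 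The same scheme applied to $(T^*)^{-1}=(T^{-1})^*$ rules out splitting over $K_2$ and yields $\rho\ge\lambda(E_2)$ in the opposite case; this is exactly why both parameters, hence $\min\{\lambda(E_1),\lambda(E_2)\}$, are needed.

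Granting concentration, I define $\psi(y)$ to be the unique point of $\Ch_{\H_1}K_1$ supporting $T^*\phi^2_{y,u^*}$, and check it does not depend on the auxiliary direction $u^*$, since two directions yielding two different points would again split mass in $K_1$ and violate $\rho<\lambda(E_1)$. Running the identical construction for $(T^{-1})^*$ produces a map $\Ch_{\H_1}K_1\to\Ch_{\H_2}K_2$, and the sandwich inequalities show the two maps are mutually inverse, so $\psi$ is a bijection. Continuity of $\psi$ and of $\psi^{-1}$ follows from the uniformity of the mass estimates together with the weak$^*$ continuity of $y\mapsto T^*\phi^2_{y,u^*}$: if $y_\alpha\to y$ then the corresponding functionals converge weak$^*$, and concentration forces $\psi(y_\alpha)\to\psi(y)$ in $K_1$. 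Hence $\psi$ is the desired homeomorphism.

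I expect the decisive difficulty to lie in the concentration lemma, where the vector-valued directions and the subspace (weak peak point) structure must be handled at once: unlike the full space $\C_0(K,E)$, a subspace need not contain functions that are simultaneously peaked at a given point and aligned with a chosen direction, so the weak peak approximations have to be combined with reflexive norm attainment and the phase optimisation built into $\lambda$, and all error terms must be controlled uniformly in order to secure both injectivity and continuity.
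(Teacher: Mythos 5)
Your plan is built on the ``concentration lemma'': that $T^*\phi^2_{y,u^*}$ is carried by a single point of $\Ch_{\H_1}K_1$. This is false, and it is not what an isomorphism with small distortion gives you. Take $K_1=K_2=K$ infinite, $E_1=E_2=\er$, $\H_i=\C(K,\er)$, and $Tf=f+\delta\,\mu_0(f)\,g$ for a fixed probability measure $\mu_0$ with infinite support, a fixed $g\in S_{\C(K,\er)}$ and $\delta$ small: then $\norm{T}\cdot\norm{T^{-1}}$ is as close to $1$ as you like, yet $T^*\ep_y=\ep_y+\delta g(y)\mu_0$ is not a point mass whenever $g(y)\neq 0$. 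The correct statement --- and what the paper actually proves --- is a \emph{dominant-atom} statement: after normalizing so that $\norm{Tf}\ge P\norm{f}$ with $P>1$ and $\norm{T}<\min\{\lambda(E_1),\lambda(E_2)\}\le 2$, at most one point of $\Ch_{\H_1}K_1$ carries an atom of norm $>P$ of a Hahn--Banach extension of $T^*(\phi_2(y,v^*))$ carried by $\ov{\Ch_{\H_1}K_1}$, while the remaining mass can be spread arbitrarily. Since your definition of $\psi$, your mutual-inverse argument and your continuity argument all invoke literal concentration, they all collapse. Continuity is the clearest casualty: weak$^*$ convergence of $T^*\phi^2_{y_\alpha,u^*}$ says nothing about where the diffuse part of the limit's mass sits, so ``concentration forces $\psi(y_\alpha)\to\psi(y)$'' has no content once concentration is weakened to a dominant atom; the paper instead needs the quantitative net argument of its Claim 5, with peak functions at $x_0$ and at $x_\beta$, Lemma \ref{lambda} applied in $E_2$, and the Phelps maximum principle.

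There are two further structural gaps. First, \emph{existence} is never addressed: in the corrected dominant-atom formulation you must prove that for every $y\in\Ch_{\H_2}K_2$ there exist $u\in S_{E_1}$, $v^*\in S_{E_2^*}$ and a point $x$ whose atom exceeds $P$, and that every $x\in\Ch_{\H_1}K_1$ arises this way. This is where the paper uses machinery that has no counterpart in your sketch: the second-dual elements $a^{**}_{x,u}$ of Lemma \ref{faktor}, their first Borel class regularity (Lemmas \ref{first-borel} and \ref{pi}), the description of $\ext B_{\H^*}$ (Lemma \ref{extreme}), and the maximum principle for affine functions of the first Borel class (Lemma \ref{minim}), combined in Lemma \ref{principle} so that $\norm{T^{**}(a^{**}_{x,u})}$ is computed as a supremum over Choquet-boundary evaluations. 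Second, your no-splitting mechanism is the wrong one: atoms at distinct points $x_1\neq x_2$ are tested by peak functions $f_1\otimes e_1$ and $f_2\otimes e_2$ with essentially disjoint supports, so the quantity $\norm{e_1+\lambda e_2}_{E_1}$ never enters --- any phase combination has sup-norm about $1$. Indeed, for a \emph{fixed} direction $u^*$, uniqueness of an atom of norm $>P$ is trivial mass counting, because two such atoms would force $\norm{T^*(\phi_2(y,u^*))}\geq 2P>2>\norm{T}$. The genuinely hard point is consistency across \emph{different} directions $v^*$, and there the objects to be combined are the image vectors in $E_2$ sitting at the common point $y$, via Lemma \ref{lambda} applied in $E_2$ together with the exact norm identity $\norm{\alpha_1 a^{**}_{x_1,u_1}+\alpha_2 a^{**}_{x_2,u_2}}=\max\{\abs{\alpha_1},\abs{\alpha_2}\}$ of Lemma \ref{faktor}. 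Consequently the single-valuedness of the map $\Ch_{\H_2}K_2\to\Ch_{\H_1}K_1$ is governed by $\lambda(E_2)$, not by $\lambda(E_1)$ as you claim, so the parameter bookkeeping --- the whole reason $\min\{\lambda(E_1),\lambda(E_2)\}$ appears in the theorem --- is also incorrect in your argument.
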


The notions of Choquet boundary and weak peak points will be described in Section \ref{sectionaux}. For $\H=\C_0(K, E)$, the Choquet boundary of $\H$ coincides with $K$ and each point of $K$ is a weak peak point, see Remarks \ref{remarks}. The proof of the above result combines the methods of \cite{cidralgalegovillamizar} (which are in turn adapted from \cite{cambern-pacific}) with the methods developed in \cite{rondos-spurny-spaces}. The maximum principle for affine functions of the first Borel class (see Lemma \ref{minim}) even allows some technical simplifications compared to \cite{cidralgalegovillamizar} and \cite{cambern-pacific}, since we can construct the desired homeomorphism directly as a mapping from $K_1$ to $K_2$ (compare with the definition on pages 248 and 249 in \cite{cambern-pacific}). The reason for this is that with the use of the maximum principle it is much easier to prove that the desired mapping from $K_1$ to $K_2$ is surjective.

The isomorphic vector-valued Banach-Stone theorems for subspaces were treated by Al-Halees and Fleming in \cite{halees-fleming}, with quite different methods compared to ours. The authors use the notion of strong boundary, which is somewhat different from the Choquet boundary that we use. Also, their results work for those subspaces of vector-valued continuous functions that are so called $\C_0(K, \ef)$-modules, meaning that they are closed with respect to multiplication by functions from $\C_0(K, \ef)$. In problem (i) on page 213, they ask whether this module condition can be weakened or removed. We believe that our results give a positive answer to this problem.  

Next we turn our attention to the so called weak version of the Banach-Stone theorem. 

The first result in this area is due to Cengiz \cite{cengiz}, who showed that locally compact Hausdorff spaces $K_1$ and $K_2$ have the same cardinality provided that the spaces $\C_0(K_1, \ef)$ and $\C_0(K_2, \ef)$ are isomorphic.

In \cite{rondos-spurny}, we showed that if for $i=1,2$, $X_i$ is a compact convex set such that each point of $\ext X_i$ is a weak peak point, then the cardinality of $\ext X_1$ is equal to the cardinality of $\ext X_2$ provided that $\fra(X_1,\ce)$ and $\fra(X_2,\ce)$ are isomorphic. In \cite{rondos-spurny-spaces}, we provided an analogous result for the Choquet boundaries of closed subspaces of $\C_0(K_i,\ef)$.

In the area of weak vector-valued Banach-Stone type theorems, Candido and Galego in \cite{candidogalego} showed that if $K_1, K_2$ are locally compact Hausdorff spaces and $E$ is a Banach space having nontrivial Rademacher cotype, such that either $\E$ has the Radon-Nikodym property or $E$ is separable, then either both $K_1$ and $K_2$ are finite or $K_1$ and $K_2$ have the same cardinality provided that the spaces $\C_0(K_1, E)$ and $\C_0(K_2, E)$ are isomorphic. 

This result was improved by Galego and Rincón-Villamizar in \cite{GalegoVillamizar}, who showed that the same conclusion holds for Banach spaces not containing an isomorphic copy of $c_0$. The way to this improvement was using a nice characterization of Banach spaces not containing an isomorphic copy of $c_0$, see \cite[Theorem 6.7]{morrison2001functional}, and a result of Plebanek, see \cite[Theorem 3.3]{Plebanek2015}, which made it possible to remove the assumptions of separability and the Radon-Nikodym property. We prove an analogous result for closed subspaces of vector-valued continuous functions, whose Choquet boundaries consist of weak peak points.
In our setting, Plebanek's result is replaced by the maximum principle. 

Thus the second main result of this paper is the following.	
\begin{thm}
	\label{weak Banach-Stone}
	For $i=1,2$, let $\H_i$ be a closed subspace of $\C_0(K_i, E_i)$ for some locally compact space $K_i$ and a Banach space $E_i$ over the same field $\ef$. For $i=1,2$, let $E_i$ does not contain an isomorphic copy of $c_0$. Assume that each point of $\Ch_{\H_1} K_1$ and $\Ch_{\H_2} K_2$ is a weak peak point and let $T\colon \H_1\to \H_2$ be an isomorphism. Then either both sets $\Ch_{\H_1} K_1$ and $\Ch_{\H_2} K_2$ are finite or they have the same cardinality.
\end{thm}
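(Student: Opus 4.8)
The plan is to extract from the isomorphism a single cardinality estimate and then symmetrize. Concretely, I would prove
\[
\operatorname{card}(\Ch_{\H_2} K_2) \le \operatorname{card}(\Ch_{\H_1} K_1)
\]
whenever $\Ch_{\H_1} K_1$ is infinite; applying the identical argument to $T^{-1}$ (this is where the hypothesis $E_2 \not\supseteq c_0$ is used, just as $E_1 \not\supseteq c_0$ is used here) yields the reverse inequality, and the Cantor--Schr\"oder--Bernstein theorem then forces equality. Since the estimate also excludes the possibility that one boundary is finite while the other is infinite, this produces exactly the dichotomy of the statement, leaving only the harmless case in which both boundaries are finite.

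To build the estimate, for each $y \in \Ch_{\H_2} K_2$ I would use that $y$ is a weak peak point to select, for a fixed small $\varepsilon$, a function $h_y \in B_{\H_2}$ with $\norm{h_y(y)} > 1-\varepsilon$ and $\norm{h_y} < \varepsilon$ on $\Ch_{\H_2} K_2$ outside a prescribed neighbourhood of $y$. Transporting by the isomorphism, put $f_y = T^{-1} h_y \in \H_1$; this is a uniformly bounded family with $\norm{f_y} \ge \norm{T}^{-1}(1-\varepsilon) > 0$. The maximum principle (Lemma~\ref{minim}) applied on $K_1$ gives $\norm{f_y} = \sup_{x \in \Ch_{\H_1} K_1} \norm{f_y(x)}$, so there is a point $x(y) \in \Ch_{\H_1} K_1$ with $\norm{f_y(x(y))} \ge \delta_0$, where $\delta_0 = \tfrac12 \norm{T}^{-1}(1-\varepsilon)$ does not depend on $y$. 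This defines a location map $y \mapsto x(y)$ from $\Ch_{\H_2} K_2$ into $\Ch_{\H_1} K_1$.

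The heart of the proof is the claim that this map is finite-to-one: for every $x \in \Ch_{\H_1} K_1$ and every $\delta > 0$, the set $\lbrace y : \norm{f_y(x)} > \delta \rbrace$ is finite. Suppose not and choose distinct $y_1, y_2, \dots$ in it, setting $v_n = f_{y_n}(x) \in E_1$, so that $\norm{v_n} > \delta$. I would show that $\sum_n v_n$ is weakly unconditionally Cauchy in $E_1$. Since $f_{y_n} = T^{-1} h_{y_n}$ and since evaluation at $x$ is a bounded operator, and since weak unconditional Cauchyness is preserved by bounded maps and characterised by $\sup_F \norm{\sum_{n \in F} \pm h_{y_n}} < \infty$ over finite $F$, it suffices to bound these finite signed sums in $\C_0(K_2, E_2)$; by the maximum principle each such norm is computed over $\Ch_{\H_2} K_2$, where the off-neighbourhood smallness of the $h_{y_n}$ is available. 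A weakly unconditionally Cauchy series whose terms are bounded away from $0$ cannot exist in a space that contains no copy of $c_0$ (Bessaga--Pe\l czy\'nski, see \cite{morrison2001functional}), contradicting $\norm{v_n} > \delta$. Hence each fibre $\lbrace y : x(y) = x \rbrace \subseteq \lbrace y : \norm{f_y(x)} > \delta_0 \rbrace$ is finite, whence $\operatorname{card}(\Ch_{\H_2} K_2) \le \aleph_0 \cdot \operatorname{card}(\Ch_{\H_1} K_1) = \operatorname{card}(\Ch_{\H_1} K_1)$.

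The main obstacle is exactly the uniform bound on $\sup_F \norm{\sum_{n \in F} \pm h_{y_n}}$ along $\Ch_{\H_2} K_2$. The weak peak property controls each $h_{y_n}$ only outside its own neighbourhood, so the bound is immediate only if at each boundary point at most boundedly many of these neighbourhoods are active; when the chosen points $y_n$ accumulate, their neighbourhoods overlap and the naive estimate collapses. This is where the maximum principle does genuine work, legitimately confining every sup-norm computation to $\Ch_{\H_2} K_2$, and where one must choose the points $y_n$ recursively with rapidly shrinking neighbourhoods and summable tolerances so that the boundary overlap, and hence the partial sums, stay uniformly controlled. This construction is precisely what replaces the appeal to Plebanek's theorem available in the module setting. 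Once the finiteness lemma is established, the reduction of the first paragraph completes the proof.
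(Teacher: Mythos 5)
Your toolkit is the right one---weak peak functions, the maximum principle, a finite-to-one correspondence certified by a weakly unconditionally Cauchy series and the Bessaga--Pe\l czy\'nski characterization from \cite[Theorem 6.7]{morrison2001functional}, and Cantor--Schr\"oder--Bernstein at the end---but the architecture has a genuine gap which your last paragraph acknowledges without resolving. Your correspondence $y\mapsto x(y)$ is defined through a family of peak functions $h_y$ chosen \emph{in advance}, one for each $y\in\Ch_{\H_2}K_2$, and your key finiteness claim concerns the sets $\{y:\norm{f_y(x)}>\delta\}$ with $f_y=T^{-1}h_y$. This is not a statement about $T$ at all but about the chosen family, and for some admissible choices it is false. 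Take $\H_1=\H_2=\C([0,1],\ef)$ and $T=\id$: whatever neighbourhoods $U_y$ you prescribe, each $U_y$ contains a rational interval around $y$, so some fixed rational interval works for uncountably many $y$; hence there is a point $x_0$ lying in $U_y$ for uncountably many $y$, and for those $y$ nothing in the weak peak point definition prevents choosing $h_y$ equal to $1$ at $x_0$ as well as at $y$ (Urysohn). Then $\{y:\abs{f_y(x_0)}>\delta\}$ is uncountable, and no contradiction with $c_0\not\subset\ef$ can arise because the relevant series simply fails to be weakly unconditionally Cauchy---which is exactly the missing uniform bound on $\sup_F\norm{\sum_{n\in F}\pm h_{y_n}}$. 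Your proposed repair, choosing ``the points $y_n$ recursively with rapidly shrinking neighbourhoods and summable tolerances,'' is circular: the $y_n$ are not yours to choose (they enumerate a purportedly infinite fibre), and re-choosing $h_{y_n}$ with smaller neighbourhoods changes $f_{y_n}=T^{-1}h_{y_n}$ and thereby destroys the inequality $\norm{f_{y_n}(x)}>\delta$ that put $y_n$ in the fibre in the first place. A point-finiteness obstruction of the above kind shows this cannot be fixed by any a priori cleverness in selecting the $h_y$.

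The paper escapes this circularity by making the correspondence \emph{intrinsic}, independent of any choice of peak functions: for $x\in\Ch_{\H_1}K_1$ and $u\in S_{E_1}$ it uses the bidual elements $a^{**}_{x,u}\in\H_1^{**}$ of Lemma~\ref{faktor}, and sets $X_y=\{x:\sup_{u,v^*}\abs{\la T^{**}a^{**}_{x,u},\phi_2(y,v^*)\ra}>0\}$; equivalently, by \eqref{duality}, $x\in X_y$ iff the Hahn--Banach extensions $\mu$ of $T^*\phi_2(y,v^*)$ carried by $\ov{\Ch_{\H_1}K_1}$ have $\mu(\{x\})\neq0$. The maximum principle for functions of the first Borel class (Lemmas~\ref{minim} and~\ref{principle}) shows every $x$ lies in some $X_y$, and---this is the step your scheme cannot reach---when some $X_y^{\ep}$ is assumed infinite, the points $x_n$ are fixed \emph{first}, pairwise disjoint neighbourhoods $U_n$ are extracted as in Lemma~\ref{c0}, and only \emph{then} are peak functions $f_n\otimes u_n$ chosen, adapted to these disjoint sets with summable tolerances; disjointness yields $\norm{\sum_i\alpha_i f_i\otimes u_i}_{\sup}<2$, so $\sum_n T(f_n\otimes u_n)(y)$ is weakly unconditionally Cauchy in $E_2$ with terms of norm at least $\ep/2$, contradicting $c_0\not\subset E_2$. (The paper also treats the finite case separately, via Lemmas~\ref{l_inf}, \ref{c0} and Proposition~\ref{finite}, rather than through the correspondence.) To salvage your write-up you would have to replace the $h_y$-dependent fibres by such an intrinsic quantity; as written, the central finiteness lemma is both unproved and, for some admissible families $\{h_y\}$, false.
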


\section{Notation and auxiliary results}
\label{sectionaux}

Let $K$ be a locally compact Hausdorff space, $E$ be a Banach space and $\H \subset \C_0(K, E)$ be a subspace.
We will from now on implicitly assume that the dimension of both the spaces $E$ and $\H$ is at least 1. If $\H$ or $E$ has the dimension zero then the assumptions of our main results are never satisfied.

For $h \in \H$ and $\e \in \E$, $\e(h)$ is the element of $\C_0(K, \ef)$ defined by $\e(h)(x)=\la \e, h(x) \ra$ for $x \in K$. We define the canonical scalar function space $\A \subset \C_0(K, \ef)$ associated to $\H$ as the closed linear span of the set 
\[ \lbrace \e(h): \e \in \E, h \in \H \rbrace  \subset \C_0(K, \ef).\]
 Since both the spaces $\H$ and $E$ are of dimension at least 1 by the assumption, it follows that the dimension of $\A$ is at least 1 as well. 
 
 The spaces $B_{\E}$, $B_{\H^*}$ and $B_{\A^*}$ will be always equipped with the $w^*$-topology, unless otherwise stated.
 We consider evaluation mappings $i$, $\phi$ defined as 
 \[ i\colon K \to B_{\A^*}, x \mapsto i(x), \quad 
 \phi\colon K \times B_{\E} \to B_{\H^*}, (x, \e) \mapsto \phi(x, \e), \]
 where 
 \[ \la i(x), a \ra=a(x), \quad a \in \A, \quad \text{and} \quad
 \la \phi(x, \e), h \ra=\la \e, h(x) \ra=\e(h)(x), \quad h \in \H.\]
 The mappings $i$ and $\phi$ are continuous. Moreover, for each $x \in K$, $\e_1, \e_2 \in B_{\E}$ and $\alpha_1, \alpha_2 \in B_{\ef}$, if the element $\alpha_1\e_1+\alpha_2\e_2$ belongs to $B_{\E}$ then it holds that
 $\phi(x, \alpha_1\e_1+\alpha_2\e_2)=\alpha_1\phi(x, \e_1)+\alpha_2\phi(x, \e_2)$.
 
We define the \emph{Choquet boundary} $\Ch_{\H} K$ of $\H$ as the Choquet boundary of $\A$, that is, $\Ch_{\H} K$ is the set of those points $x \in K$ such that $i(x)$ is an extreme point of $B_{\A^{*}}$.

From now on, let $K$ be compact. 

 The symbol $\ep_x$ stands for the Dirac measure at the point $x\in K$. If $f \colon K \rightarrow \ef$ is a bounded Borel function and $\mu \in \M(K, \ef)$ then the symbol $\mu(f)$ stands for $\int_K f d\mu$.
 
 Let us now recall some notions of the theory of vector measures. Let $\mu$ be a set function defined on the Borel sets of $K$ with values in $E^*$. Then $\mu$ is called \emph{completely aditive}, if for every sequence of pairwise disjoint Borel sets $\lbrace A_i \rbrace_{i=1}^\infty$ it holds that
 \[ \mu(\bigcup_{i=1}^\infty A_i)=\sum_{i=1}^\infty \mu(A_i) \]
 in the sense of norm convergence in $E^*$. If $\mu$ is completely aditive, then it is called \emph{regular}, if for every $e \in E$ the scalar measure $\langle \mu, e \rangle$ defined by 
\[ \la \mu, e \ra(A)=\la \mu(A), e \ra, \quad A \subset K \text{ Borel},\]
 is both inner and outer regular in the usual sense. 
The variation  of $\mu$ is a measure defined by
\[\abs{\mu}(A)=\sup \{\sum_{i=1}^{n} \norm{\mu(A_i)}: \{ A_i \}_{i=1}^{n} \text{ is a Borel partition of } A\}, \quad A \subset K \text{ Borel}.\]
Finally, $\mu \in \M(K, \E)$ if $\mu$ is completely aditive, regular and $\abs{\mu}(K) < \infty$. The norm of $\mu \in \M(K, \E)$ is $\norm{\mu}=\abs{\mu}(K)$. It follows from the definitions that if $\mu \in \M(K, \E)$, then for every $e \in E$, the scalar measure $\la \mu, e \ra$ belongs to $\M(K, \ef)$.

 For $f \colon K \rightarrow \ef$ and $e \in E$, the function $f \otimes e \colon K \rightarrow E$ is defined by
 \[ (f \otimes e)(x)=f(x)e, \quad x \in K .\]
 If $f \in \C(K, \ef)$ and $e \in E$, then $f \otimes e \in \C(K, E)$ with $\norm{f \otimes e}=\norm{f}\norm{e}$, and it follows from the form of duality between $\M(K, \E)$ and $\C(K, E)^*$ (see \cite[pages 192 and 193]{Singer}) that 
  \begin{equation}
 \label{application}
 \la \mu, e \ra(f)=\langle \mu, f \otimes e \rangle, \quad \mu \in \M(K, \E).
 \end{equation}
 Also if $f \colon K \rightarrow \ef$ is a bounded Borel function, then for a vector measure $\mu \in \M(K, \E)$ and $e \in E$ we consider the application $\la \mu, f \otimes e \ra$ of $\mu$ on $f \otimes e$ given by \eqref{application}.
 
 Further, if $\mu \in \M(K, \ef)$ and $\e \in \E$, then the vector measure $\e \mu \in \M(K, \E)$ is defined by
 \[  \la \e \mu, h \ra= \mu(\e(h)), \quad h \in \C(K, E).\]

Later we will frequently use the fact that for a bounded Borel function $f \colon K \rightarrow \ef$, $\mu \in \M(K, \ef)$, $\e \in \E$ and $e \in E$ it holds that
\begin{equation}
\label{aplikace}
\la \e \mu, f \otimes e \ra=\la \e, e \ra \mu(f).
\end{equation}
By \eqref{application} we have $\la \e \mu, f \otimes e \ra=\la \e \mu, e \ra(f)$. Thus to prove \eqref{aplikace} it is enough to show that the scalar measures $\la \e, e \ra \mu$ and  $\la \e \mu, e \ra$ coincide in $\M(K, \ef)$. To this end, let $h \in \C(K, \ef)$. Then 
\begin{equation}
\nonumber
\la \e \mu, e \ra(h)=\la \e\mu, h \otimes e\ra=\mu(\e(h \otimes e))=\mu(\la \e, e \ra h)=\la \e, e \ra \mu(h).
\end{equation}
Thus $\la \e \mu, e \ra=\la \e, e \ra \mu$, and \eqref{aplikace} holds.

If $x \in K$, then each $\mu \in \M(K, \E)$ can be uniquely decomposed as $\mu=\psi \ep_x+\nu$, where $\psi \in \E$ and $\nu \in \M(K, \E)$ with $\nu(\{x\})=0$. To see this, first observe that $\mu|_{\{x\}}=\mu(\{x\})\ep_x $. Indeed, for a Borel set $A \subset K$ and $e \in E$ we have
\[\la \mu|_{\{x\}}(A), e \ra=\la \mu(A \cap \{x\}), e \ra=\la \mu(\{x\})\ep_x(A) , e \ra.\]
Thus if we denote $\psi=\mu(\{x\})$ and $\nu=\mu|_{K \setminus \{x\}}$, then 
\[\mu=\mu|_{\{x\}}+\mu|_{K \setminus \{x\}}=\psi\ep_x+\nu.\]
The uniqueness part is easy. Whenever we write a vector measure $\mu \in \M(K, \E)$ in the form $\mu=\psi \ep_x+\nu$, then we implicitly mean that $\psi \in \E$ and $\nu(\{x\})=0$.

The methods of \cite{cambern-pacific} heavily rely on the description of the second dual space of $\C(K, E)$. It was first shown by Kakutani \cite{kakutani} that the second dual space of $\C(K, \ef)$ is in the form $\C(Z, \ef)$, where $Z$ is a compact Hausdorff space depending on $K$. Moreover, there exists a natural mapping $t:K \rightarrow Z$ which maps $K$ onto the set of isolated points of $Z$, see \cite[Corollary 4.2]{daleslaustrauss}. 

Further, it was shown in \cite{camberngriem2} that if $E$ is a Banach space such that $\E$ has the Radon-Nikodym property (in particular, if $E$ is reflexive), then the space $\C(K, E)^{**}$ is isometrically isomorphic to the space $\C(Z, E^{**}_{\sigma^\ast})$, where $Z$ is the compact Hausdorff space satisfying $\C(K, \ef)^{**} \simeq \C(Z, \ef)$, and $E^{**}_{\sigma^\ast}$ denotes $E^{**}$ equipped with its weak$^*$ topology. The proof of this fact consists of the following series of isometries:
\begin{equation}
\nonumber
\begin{aligned}
&\C(K, E)^{**} \simeq \M(K, \E)^*\simeq \left[ \M(K, \ef) \hat{\otimes} \E \right]^* \simeq \left[ \E \hat{\otimes} \M(K, \ef) \right]^* \simeq\\&\simeq
 L(\E, \M(K, \ef)^*) \simeq L(\E, \C(Z, \ef)) \simeq \C(Z, E^{**}_{\sigma^*}),
\end{aligned}
\end{equation}
where the last isometry is defined for $F \in L(\E, \C(Z, \ef)) \simeq \C(Z, E^{**}_{\sigma^*})$ by the equality
\begin{equation}
\label{pom2}
\la F(z), \e \ra_{E^{**}, E^*}=F(\e)(z), \quad z \in Z, \e \in \E,
\end{equation}
see \cite[Theorem 2]{camberngriem2}.

Now suppose that $E$ is reflexive. For $F \in \C(K, E)^{**} \simeq \C(Z, E_{\sigma^\ast})$ and $\e \in \E$, the function $\e(F)$ defined for $z \in Z$ by $\e(F)(z)=\la \e, F(z) \ra$ belongs to $\C(Z, \ef)$, and by \eqref{pom2} it coincides with $F(\e)$, if $F$ is considered as the element of $L(\E, \C(Z, \ef))$.

 The following equality is frequently used by Cambern in \cite{cambern-pacific}:
\begin{equation}
\label{natural}
\begin{aligned}
\la F, \e \ep_{x} \ra_{\C(K, E)^{**}, \M(K, E^*)}=\la \ep_{tx}, \e(F) \ra_{\M(Z, \ef), \C(Z, \ef)}, \quad x \in K, \e \in \E,
\end{aligned}
\end{equation}
see page 252 in \cite{cambern-pacific}. 

For the sake of clarity we collect some of the results and arguments justifying \eqref{natural}. The scalar version of this equality follows from the work of Gordon \cite{gordon}, and was used by Cohen in \cite{cohen} to give a different proof of the theorem of Amir and Cambern. More specifically, Gordon proves that the action of $\Phi \in \C(Z, \ef) \simeq \C(K, \ef)^{**}$  on the Dirac measure $\ep_x \in \M(K, \ef)$ is given by 
\begin{equation}
\label{scalar}
\la \Phi, \ep_x \ra_{\C(K, \ef)^{**}, \M(K, \ef)} =\la \ep_{tx}, \Phi \ra_{\M(Z, \ef), \C(Z, \ef)}.
\end{equation}
 
Now, let $F \in \C(K, E)^{**} \simeq \left[ \E \hat{\otimes} \M(K, \ef) \right]^* \simeq L(\E, \M(K, \ef)^*) \simeq \C(Z, E_{\sigma^\ast})$, $x \in K$ and $\e \in \E$. Notice that the vector measure $\e\ep_x$ is in the setting of tensor products nothing else then the canonical tensor $\e \otimes \ep_x$. Thus by the form of the correspondence between $\left[\E \hat{\otimes} \M(K, \ef) \right]^*$ and $ L(\E, \M(K, \ef)^*)$  (see e.g. \cite[page 230, Corollary 2]{diesteluhl}) one obtains that
\begin{equation}
\label{pom1}
\begin{aligned}
&\la F, \e \ep_x \ra_{\C(K, E)^{**}, \M(K, E^*)}=
\la F, \e \otimes \ep_x \ra_{\left[\E \hat{\otimes} \M(K, \ef) \right]^*, \E \hat{\otimes} \M(K, \ef)}
=\\&=
\la F(\e), \ep_x \ra_{\M(K, \ef)^*, \M(K, \ef)}
=\la F(\e), \ep_x \ra_{\C(K, \ef)^{**}, \M(K, \ef)}.
\end{aligned}
\end{equation}

Thus we have 
\begin{equation}
\nonumber
\begin{aligned}
&\la F, \e \ep_{x} \ra_{\C(K, E)^{**}, \M(K, E^*)}=^{\eqref{pom1}}
\la F(\e), \ep_x \ra_{\C(K, \ef)^{**}, \M(K, \ef)} =^{\eqref{scalar}}\\&=
\la \ep_{tx}, F(\e) \ra_{\M(Z, \ef), \C(Z, \ef)}
=^{\eqref{pom2}}
\la \ep_{tx}, \e(F) \ra_{\M(Z, \ef), \C(Z, \ef)}, 
\end{aligned}
\end{equation}
which verifies \eqref{natural}. 

It is worth mentioning that in \cite[Section 6]{cidralgalegovillamizar}, there is given a similar description of the second dual space of $\C_0(K, E)$, where $K$ is just locally compact. We will need to use only the case when $K$ is compact, though. 

Now we collect several lemmas needed for the proofs of the main results. We start with generalizations of lemmas used in \cite{rondos-spurny-spaces} to the vector valued context. Lemma \ref{comp} will allow us to consider compact spaces in the proofs of the main results instead of locally compact spaces.

\begin{lemma}
\label{extreme}
Let $\H$ be a closed subspace of $\C(K, E)$ for some compact space $K$ and a Banach space $E$ and let $\phi\colon K \times B_{\E} \to B_{\H^*}$ be the evaluation mapping. Then
\[
\ext B_{\H^*}\subset \phi(\Ch_{\H} K \times \ext B_{\E}).
\]
\end{lemma}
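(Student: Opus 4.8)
The plan is to realise $\ext B_{\H^*}$ inside $\phi(K\times B_{\E})$ first, and then to sharpen both coordinates of the representation by one recurring device: whenever an extreme functional factors through an affine weak$^*$-continuous contraction, its fibre is a \emph{face} of the source ball, so it contains an extreme point of the source whose shape we already understand.

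First I would record that $\phi(K\times B_{\E})$ is weak$^*$-compact (being the continuous image of the compact set $K\times B_{\E}$), balanced (since $\phi(x,\alpha\e)=\alpha\phi(x,\e)$ for $|\alpha|\le1$), and norming, because $\sup_{(x,\e)}|\la\phi(x,\e),h\ra|=\sup_{x}\|h(x)\|=\|h\|$. Hence $B_{\H^*}=\overline{\co}^{\,w^*}\phi(K\times B_{\E})$, and Milman's theorem yields $\ext B_{\H^*}\subseteq\phi(K\times B_{\E})$. Fixing $F\in\ext B_{\H^*}$, I write $F=\phi(x,\e_0)$ with $x\in K$ and $\e_0\in B_{\E}$. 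To make the $\E$-coordinate extreme, consider $Q=\{\e\in B_{\E}\colon \phi(x,\e)=F\}$, which is nonempty, weak$^*$-compact and convex; it is a face of $B_{\E}$, for if $\e=\tfrac12(\e_1+\e_2)\in Q$ with $\e_1,\e_2\in B_{\E}$, then $F=\tfrac12(\phi(x,\e_1)+\phi(x,\e_2))$ with both terms in $B_{\H^*}$, so extremality of $F$ forces $\e_1,\e_2\in Q$. A Krein--Milman extreme point $\e$ of $Q$ is then extreme in $B_{\E}$, and $F=\phi(x,\e)$ with $\e\in\ext B_{\E}$.

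The decisive step, and the one I expect to be the main obstacle, is producing a \emph{Choquet boundary} point; the key realisation is that one must \emph{not} insist on keeping the original $x$. I would introduce the affine weak$^*$-continuous contraction $R\colon B_{\A^*}\to B_{\H^*}$ defined by $R(\eta)(h)=\eta(\e(h))$, which is well defined because $\e(h)\in\A$ with $\|\e(h)\|\le\|h\|$, and note $R(i(x))=F$. The fibre $P=\{\eta\in B_{\A^*}\colon R(\eta)=F\}$ is again a nonempty weak$^*$-compact convex face of $B_{\A^*}$, by the same extremality argument applied to $F$. Let $\zeta_0$ be an extreme point of $P$; being an extreme point of a face, it is extreme in $B_{\A^*}$. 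Applying Milman to the weak$^*$-compact norming set $\{\lambda i(y)\colon |\lambda|=1,\ y\in K\}$, whose weak$^*$-closed convex hull is $B_{\A^*}$, gives $\zeta_0=\lambda i(y)$ for some $|\lambda|=1$ and some $y\in K$; since $i(y)=\bar\lambda\zeta_0$ is then extreme, the very definition of the Choquet boundary gives $y\in\Ch_{\A}K=\Ch_{\H}K$. Unwinding $R(\zeta_0)=F$ yields $\la\lambda\e,h(y)\ra=\la F,h\ra$ for every $h$, i.e. $F=\phi(y,\lambda\e)$.

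Finally, since $|\lambda|=1$ and $\e\in\ext B_{\E}$, the functional $\lambda\e$ is again extreme in $B_{\E}$, so $F=\phi(y,\lambda\e)\in\phi(\Ch_{\H}K\times\ext B_{\E})$, completing the argument. I would emphasise that the whole difficulty is concentrated in the third step: the base point $x$ delivered by Milman may well fail to lie in the Choquet boundary, and the gain comes entirely from letting the face $P\subseteq B_{\A^*}$ select a better point $y$, with the unimodular phase $\lambda$ harmlessly absorbed into the extreme functional. The two refinement steps are then formally identical instances of the same ``fibre is a face'' principle, once in $B_{\E}$ and once in $B_{\A^*}$.
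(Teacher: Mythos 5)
Your proof is correct, and it takes a genuinely different route from the paper's. The paper works in an auxiliary space: it introduces the space $BA$ of continuous functions on $B_{\A^*}\times B_{\E}$ that are affine and homogeneous in each variable, embeds $\H$ isometrically into $BA$ via $Uh(a^*,\e)=\la a^*,\e(h)\ra$, shows by Milman's theorem and a separation argument that every extreme point of $B_{BA^*}$ is an evaluation $\ep_{(a^*,\e)}|_{BA}$, upgrades both coordinates to extreme points by a direct extremality argument (which requires verifying that $\ep_{(a_1^*,\e)}|_{BA}\neq\ep_{(a_2^*,\e)}|_{BA}$, using the functions $a\otimes e\in BA$), invokes the scalar lemma $\ext B_{\A^*}\subset\{\lambda i(x)\colon x\in\Ch_{\H}K,\ \lambda\in S_{\ef}\}$ from the earlier paper, and finally pulls a given $s\in\ext B_{\H^*}$ back through the restriction map $r=U^*\colon B_{BA^*}\to B_{\H^*}$, citing the fact that the fibre of an extreme point under a continuous affine surjection between compact convex sets contains an extreme point. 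You dispense with $BA$ altogether: Milman applied directly in $B_{\H^*}$ gives the crude representation $F=\phi(x,\e_0)$, and the two coordinates are then repaired successively by your fibre-is-a-face argument, once for $\phi(x,\cdot)\colon B_{\E}\to B_{\H^*}$ and once for $R\colon B_{\A^*}\to B_{\H^*}$. Your face device is in substance the same principle the paper cites from the literature, but you never need surjectivity of the affine map, since nonemptiness of the fibre is already guaranteed by the representation in hand; moreover, because you only seek \emph{some} extreme element of the fibre rather than proving the original coordinate extreme, the distinctness check in the paper's coordinate-upgrade step becomes unnecessary. What the paper's route buys is a single clean description of $\ext B_{BA^*}$ and the direct reuse of the scalar lemma; what your route buys is a shorter, self-contained argument (you rederive the description of $\ext B_{\A^*}$ inline via Milman) whose only external inputs are Krein--Milman and Milman.
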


\begin{proof}
Let $\A$ be the canonical scalar function space associated to $\H$. For technical reasons we consider the space $BA=BA_{hom}(B_{\A^*} \times B_{\E}, \ef) \subset \C(B_{\A^*} \times B_{\E}, \ef)$ of all continuous functions from $B_{\A^*} \times B_{\E}$ to $\ef$ that are affine and homogenous in both variables, endowed with the supremum norm. The space $\H$ is isometrically embedded in $BA$ by the mapping $U: \H \rightarrow BA$ defined by \[Uh(a^*, \e)=\la a^*, \e(h) \ra, \quad h \in \H, a^* \in B_{\A^*}, \e \in B_{\E}.\]
We consider the weak$^*$ topology on the dual unit ball $B_{BA^*}$.

First we show that $\ext B_{BA^*} \subset \{\ep_{(a^*, \e)}|_{BA}: a^* \in B_{\A^*}, \e \in B_{\E}\}$. Since the latter set is compact, by the Milman theorem it is enough to show that 
\[\ov{co}(\{\ep_{(a^*, \e)}|_{BA}: a^* \in B_{\A^*}, \e \in B_{\E}\})=B_{BA^*}.\]
Assuming the contrary, there exist 
\[s \in B_{BA^*} \setminus \ov{co}(\{\ep_{(a^*, \e)}|_{BA}: a^* \in B_{\A^*}, \e \in B_{\E}\}),\text{ } \alpha \in \er \text{ and }f \in BA\]
 such that 
\begin{equation}
\nonumber
\begin{aligned}
&\re \la s, f \ra>\alpha>\sup \lbrace \re f(a^*, \e): a^* \in B_{\A^*}, \e \in B_{\E}\}
=\\&=
\sup \lbrace \abs{f(a^*, \e)}: a^* \in B_{\A^*}, \e \in B_{\E}\}=\norm{f},
\end{aligned}
\end{equation}
by the homogenity of the function $f$. 
Thus we have
\begin{equation}
\nonumber
\begin{aligned}
&\alpha <\re \la s, f \ra
\leq \norm{s} \norm{f}\leq \alpha\norm{s} \leq \alpha.
\end{aligned}
\end{equation}
This contradiction proves that $\ext B_{BA^*} \subset \{\ep_{(a^*, \e)}|_{BA}: a^* \in B_{\A^*}, \e \in B_{\E}\}$. 

Now we show that $\ext B_{BA^*} \subset \{\ep_{(a^*, \e)}|_{BA}: a^* \in \ext B_{\A^*}, \e \in \ext B_{\E}\}$. Let $s \in \ext B_{BA^*}$. We know from above that $s$ is of the form $\ep_{(a^*, \e)}|_{BA}$ for some $a^* \in B_{\A^*}$ and $\e \in B_{\E}$. 
We show that $\e \neq 0$. If $\e=0$, then $s=0 \in \ext B_{BA^*}$, thus $B_{BA^*}=\{0\}$. This gives a contradiction with the fact that both $\A^*$ and $E^*$ are nonzero spaces.
Now we assume that there are distinct $a_1^*, a_2^*$ such that $a^*=\frac{1}{2}(a_1^*+a_2^*)$. Then 
\[ \ep_{(a^*, \e)}|_{BA}=\frac{1}{2}(\ep_{(a_1^*, \e)}|_{BA}+\ep_{(a_2^*, \e)}|_{BA}),\]
by the affinity of functions from $BA$.
Moreover, the points $\ep_{(a_i^*, \e)}|_{BA}$ are distinct for $i=1,2$. Indeed, since $a_1^* \neq a_2^*$, there exists $a \in \A$ such that $\la a_1^*, a \ra \neq \la a_2^*, a \ra$. Since $\e \neq 0$, there exists $e \in E$ such that $\la \e, e \ra \neq 0$. Now, the function $a \otimes e$ belongs to $BA$, and
\[(a \otimes e)(a_1^*, \e)=\la \e, e \ra \la a_1^*, a \ra \neq \la \e, e \ra \la a_2^*, a \ra=(a \otimes e)(a_2^*, \e).\]
We arrived at a contradiction with $s \in \ext B_{BA^*}$, and hence we obtained that $a^* \in \ext B_{\A^*}$. By observing that the roles of $B_{\A^*}$ and $B_{\E}$ are symmetric, we deduce that $a^* \neq 0$ and $\e \in \ext B_{\E}$. 

Now, the set of extreme points of $B_{\A^*}$ is contained in the set 
\[ \{ \lambda i(x): x \in \Ch_{\H} K, \lambda \in S_{\ef}) \},\]
where $i \colon K \rightarrow B_{\A^*}$ is the evaluation mapping, see \cite[Lemma 2.1]{rondos-spurny-spaces}. Thus if $a^* \in \ext B_{\A^*}$ and $\e \in \ext B_{\E}$, then there exist $\lambda \in S_{\ef}$ and $x \in \Ch_{\H} K$ such that $a^*=\lambda i(x)$, and then 
\[ \ep_{(a^*, \e)}|_{BA}=\ep_{(\lambda i(x), \e)}|_{BA}=\ep_{(i(x),\lambda \e)}|_{BA},\]
by the homogenity of functions from $BA$.
It follows that the set $\ext B_{BA^*}$ is contained in 
\begin{equation}
\nonumber
\begin{aligned}
&\{\ep_{(i(x), \lambda \e)}|_{BA}:x \in \Ch_{\H} K, \e \in \ext B_{\E}, \lambda \in S_{\ef}\}
=\\&=
\{\ep_{(i(x), \e)}|_{BA}:x \in \Ch_{\H} K, \e \in \ext B_{\E}\}.
\end{aligned}
\end{equation}

Now, let $r: B_{BA^*} \rightarrow B_{\H^*}$ be the restriction mapping. Then $r$ is a continuous affine surjection, thus for each $s \in \ext B_{\H^*}$, the set $r^{-1}(s)$ contains an extreme point of $B_{BA^*}$, see \cite[Proposition 2.72]{lmns} and \cite[page 37]{alfsen}. Thus there exist $x \in \Ch_{\H} K$ and $\e \in \ext B_{\E}$ such that $r(\ep_{(i(x), \e)}|_{BA})=s$. 

Let $h \in \H$. Then, since $r=U^*$ we obtain that

\begin{equation}
\nonumber
\begin{aligned}
&\la s, h \ra=\ep_{(i(x), \e)}(Uh)=\la i(x), \e(h) \ra=\la \e, h(x) \ra=\la \phi(x, \e), h \ra.
\end{aligned}
\end{equation}

Thus $s=\phi(x, \e)$, which finishes the proof.
\end{proof}

The following is a partial result on representing functionals on subspaces of $\C(K, E)$ by vector measures carried by the Choquet boundary. For more advanced results in this area we refer the reader to \cite{saab-tala} and \cite{batty-london}.

\begin{lemma}
\label{repre}
Let $\H$ be a subspace of $\C(K, E)$ for some compact space $K$ and a Banach space $E$. Then for any $s\in\H^*$ there exists a vector measure $\mu\in\M(\ov{\Ch_{\H} K}, \E)$ such that $\mu=s$ on $\H$ and $\norm{\mu}=\norm{s}$.
\end{lemma}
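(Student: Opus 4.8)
The plan is to realise $s$ as the barycenter of a scalar probability measure carried by the compact set $F=\phi(\ov{\Ch_{\H} K}\times B_\E)\subseteq B_{\H^*}$, to lift that measure through $\phi$ to the product $\ov{\Ch_{\H} K}\times B_\E$, and then to obtain the desired $\E$-valued measure by integrating out the $B_\E$-coordinate.

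First I would assume $\norm{s}=1$ (if $s=0$, take $\mu=0$), so that $s\in B_{\H^*}$. Since $\phi$ is continuous and its domain is compact, $F$ is $w^*$-compact, and Lemma~\ref{extreme} gives $\ext B_{\H^*}\subseteq\phi(\Ch_{\H} K\times\ext B_\E)\subseteq F$; hence $\ov{\co}\,F=B_{\H^*}$ by the Krein--Milman theorem. As the barycenter map $\M^1(F)\to B_{\H^*}$ is affine, $w^*$-continuous and has range $\ov{\co}\,F$, the point $s$ is the barycenter of some $\sigma\in\M^1(F)$. Because $\xi\mapsto\la\xi,h\ra$ is a $w^*$-continuous affine function on $B_{\H^*}$, this means
\[ \la s,h\ra=\int_F\la\xi,h\ra\,d\sigma(\xi),\qquad h\in\H. \]

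Next I would lift $\sigma$ along the continuous surjection $\phi\colon\ov{\Ch_{\H} K}\times B_\E\to F$. The map $g\mapsto g\circ\phi$ embeds $\C(F)$ isometrically into $\C(\ov{\Ch_{\H} K}\times B_\E)$, preserving positivity and the constant $1$, so the functional $g\circ\phi\mapsto\sigma(g)$ extends by the Hahn--Banach theorem to a norm-one functional on $\C(\ov{\Ch_{\H} K}\times B_\E)$ attaining its norm at $1$, hence positive. This yields a probability measure $\tilde\sigma$ on $\ov{\Ch_{\H} K}\times B_\E$ with $\phi_*\tilde\sigma=\sigma$, and therefore
\[ \la s,h\ra=\int_{\ov{\Ch_{\H} K}\times B_\E}\la\phi(x,\e),h\ra\,d\tilde\sigma(x,\e)=\int_{\ov{\Ch_{\H} K}\times B_\E}\la\e,h(x)\ra\,d\tilde\sigma(x,\e),\qquad h\in\H. \]

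Finally I would define, for a Borel set $A\subseteq\ov{\Ch_{\H} K}$, an element $\mu(A)\in\E$ by
\[ \la\mu(A),e\ra=\int_{A\times B_\E}\la\e,e\ra\,d\tilde\sigma(x,\e),\qquad e\in E. \]
Since $\norm{\e}\le1$ on $B_\E$, this is a well-defined functional with $\norm{\mu(A)}\le\tilde\sigma(A\times B_\E)$, and a Borel-partition estimate gives $\abs{\mu}(A)\le\tilde\sigma(A\times B_\E)$; complete additivity and regularity of $\mu$ are inherited from $\tilde\sigma$, so $\mu\in\M(\ov{\Ch_{\H} K},\E)$ with $\norm{\mu}\le1$. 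Pairing $\mu$ with a function $f\otimes e$ and using \eqref{application} shows that the Singer duality satisfies $\la\mu,h\ra=\int\la\e,h(x)\ra\,d\tilde\sigma(x,\e)$ for every $h\in\C(\ov{\Ch_{\H} K},E)$; by the previous display this equals $\la s,h\ra$ for $h\in\H$, so $\mu=s$ on $\H$, and then $\norm{s}=\norm{\mu|_\H}\le\norm{\mu}\le1=\norm{s}$ forces $\norm{\mu}=\norm{s}$. The step demanding the most care is this last one: translating the scalar barycentric identity into the language of $\E$-valued measures, i.e. checking that the integrated-out object is a genuine regular, completely additive vector measure whose variation does not exceed $\norm{s}$ and whose Singer pairing is given by the iterated integral; the barycenter representation and the Hahn--Banach lifting are routine.
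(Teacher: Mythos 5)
Your proof is correct, but it takes a genuinely different route from the paper's. The paper's argument is much shorter: it first observes that the restriction map $h\mapsto h|_{\ov{\Ch_{\H} K}}$ is an isometry of $\H$ onto $\B=\{h|_{\ov{\Ch_{\H} K}}:h\in\H\}$ --- this is immediate from the scalar maximum principle \cite[Theorem~2.3.8]{fleming-jamison-1} applied to the functions $\e(h)\in\A$, $\e\in S_{\E}$, since $\norm{h(x)}=\sup_{\e\in S_{\E}}\abs{\e(h)(x)}$ --- then transfers $s$ to a functional $t\in\B^*$ with $\norm{t}=\norm{s}$, and finally invokes Hahn--Banach together with Singer's identification $(\C(\ov{\Ch_{\H} K},E))^*=\M(\ov{\Ch_{\H} K},\E)$ to produce $\mu$ with $\norm{\mu}=\norm{t}=\norm{s}$. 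You instead extract the key ``no norm is lost on the boundary'' fact from Lemma~\ref{extreme} plus Krein--Milman (giving $B_{\H^*}=\ov{\co}\,\phi(\ov{\Ch_{\H} K}\times B_{\E})$), represent $s$ barycentrically by a probability measure on that compact set, lift it through $\phi$, and integrate out the $B_{\E}$-coordinate to build the vector measure by hand. Both routes work. The paper's buys brevity and delegates all measure-theoretic labor to Singer's theorem and Hahn--Banach; yours avoids citing the Fleming--Jamison maximum principle altogether (replacing it by Lemma~\ref{extreme}, which the paper proves anyway), at the cost of an explicit construction in which complete additivity, regularity, and the variation bound $\abs{\mu}(A)\le\tilde\sigma(A\times B_{\E})$ must be checked --- all of which you do, or indicate, correctly.

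Two small points should be patched. First, Lemma~\ref{extreme} is stated for \emph{closed} subspaces, whereas Lemma~\ref{repre} assumes only a subspace; this is harmless --- pass to the closure of $\H$, which has the same canonical scalar space $\A$, hence the same Choquet boundary, and canonically the same dual ball --- but it should be said. Second, your last step promotes the identity $\la\mu,f\otimes e\ra=\int\la\e,(f\otimes e)(x)\ra\,d\tilde\sigma(x,\e)$, verified via \eqref{application}, to all $h\in\C(\ov{\Ch_{\H} K},E)$; since both sides are norm-continuous linear functionals of $h$, this requires the (standard, but worth citing) fact that the linear span of the tensors $f\otimes e$ is sup-norm dense in $\C(\ov{\Ch_{\H} K},E)$. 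Neither gap is serious.
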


\begin{proof}
	Let $s\in \H^*$ be given. Let $\A$ be the canonical scalar function space of $\H$.
We write $\B$ for the space $\{h|_{\ov{\Ch_{\H} K}}: h \in \H \} \subset \C(\ov{\Ch_{\H} K},E)$.
	We show that the restriction mapping $r\colon \H \to \B$ given by $r(h)=h|_{\ov{\Ch_{\H} K}}$ is an isometric isomorphism. Indeed, since for every $\e \in S_{\E}$ we have $\e(h) \in \A$, it follows by \cite[Theorem~2.3.8]{fleming-jamison-1} that 
	\[\sup_{x \in K} \norm{h(x)} =\sup_{\e \in S_{\E}, x \in K} \abs{\e (h)(x)}=
	\sup_{\e \in S_{\E}, x \in \ov{\Ch_{\H} K}} \abs{\e (h)(x)}=\sup_{x \in \ov{\Ch_{\H} K}} \norm{h(x)}.\]
	Thus one can define $t\in\B^*$ by the formula
	\[
	\la t, a \ra=\la s, h \ra,\quad h\in\H\text{ satisfies }h|_{\ov{\Ch_{\H} K}}=a,\quad a\in \B.
	\]
	Then $\norm{t}=\norm{s}$. Using the Hahn-Banach theorem we find a measure
	\[
	\mu\in (\C(\ov{\Ch_{\H} K},E))^*=\M(\ov{\Ch_{\H} K}, \E)
	\]
	such that $\norm{\mu}=\norm{t}$ and $t=\mu$ on $\B$.
	Then $\norm{\mu}=\norm{s}$ and
	\[
    \la \mu, h \ra=\la t, h|_{\ov{\Ch_{\H} K}} \ra=\la s, h \ra,\quad h\in\H.
	\]
	This finishes the proof.
\end{proof}

The important topological notion is that of a function of the first Borel class. Thus we recall that, given a pair of topological spaces $K,L$, a function $f\colon K\to L$ is \emph{of the first Borel class} if $f^{-1}(U)$ is a countable union od differences of closed sets in $K$ for any $U\subset L$ open (see \cite{spurny-amh} or \cite[Definition~5.13]{lmns}). 

For a bounded Borel function $f\colon K\to \ef$ we define a function $\wh{f}:\M(K, \ef) \to \ef$ by $\wh{f}(\mu)=\int_K f d\mu$. 

\begin{lemma}
\label{first-borel}
Let $K$ be a compact space and $f\colon K\to \ef$ be a bounded function of the first Borel class. Then for any $e \in S_E$, the function $\wh{f} \otimes e\colon \M(K,\E  )\to \ef$ defined as
\[
(\wh{f} \otimes e)(\mu)=\la \mu, f \otimes e \ra,\quad \mu\in \M(K,\E),
\]
is of the first Borel class on any ball $rB_{\M(K,\E)}$, $r>0$.
\end{lemma}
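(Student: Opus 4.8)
The plan is to factor $\wh f\otimes e$ through the scalar functional $\wh f$ by means of a continuous reduction, and then to treat the resulting scalar statement by approximation with continuous functions. Concretely, I would first reduce to the scalar situation. For $\mu\in\M(K,\E)$, formula \eqref{application} gives
\[
(\wh f\otimes e)(\mu)=\la\mu,f\otimes e\ra=\la\mu,e\ra(f)=\int_K f\,d\la\mu,e\ra=\wh f\bigl(\la\mu,e\ra\bigr),
\]
so $\wh f\otimes e=\wh f\circ\Psi_e$, where $\Psi_e\colon\M(K,\E)\to\M(K,\ef)$ is defined by $\Psi_e(\mu)=\la\mu,e\ra$. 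The map $\Psi_e$ is weak$^*$-to-weak$^*$ continuous, since for every $g\in\C(K,\ef)$ we have $\la\Psi_e(\mu),g\ra=\la\mu,g\otimes e\ra$ with $g\otimes e\in\C(K,E)$, and because $\norm e=1$ it carries $rB_{\M(K,\E)}$ into $rB_{\M(K,\ef)}$. Since the preimage of a difference of closed sets under a continuous map is again a difference of closed sets, composing a function of the first Borel class with a continuous map again yields a function of the first Borel class; hence it suffices to prove that $\wh f$ is of the first Borel class on $rB_{\M(K,\ef)}$.

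For the scalar case I would first record the general principle that a pointwise limit of a sequence of continuous $\ef$-valued functions on an arbitrary topological space is of the first Borel class: if $\Phi_n\to\Phi$ pointwise with each $\Phi_n$ continuous, then for real scalars and $a\in\er$
\[
\{\Phi>a\}=\bigcup_{k,m\in\en}\ \bigcap_{n\ge m}\Bigl\{\Phi_n\ge a+\tfrac1k\Bigr\},
\]
which is a countable union of closed sets, so that preimages of open half-lines (and hence of all open sets, treating in the complex case the real and imaginary parts separately) are countable unions of differences of closed sets. Now, given the scalar $f$, I would produce a uniformly bounded sequence $(g_n)$ in $\C(K,\ef)$ with $g_n\to f$ pointwise on $K$; then for each fixed $\nu\in\M(K,\ef)$ the dominated convergence theorem applied to the finite measure $\abs\nu$ yields $\wh{g_n}(\nu)\to\wh f(\nu)$. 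As each $\wh{g_n}$ is weak$^*$-continuous, $\wh f$ is a pointwise limit of continuous functions on $rB_{\M(K,\ef)}$ and is therefore of the first Borel class by the principle just recorded.

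The delicate point, and the main obstacle, is the construction of the approximating sequence $(g_n)$, that is, passing from the preimage definition of the first Borel class to a representation of $f$ as a pointwise limit of continuous functions. This is transparent when $K$ is metrizable, but in general the two notions need not coincide on a non-metrizable compact space, and $rB_{\M(K,\ef)}$ is then itself non-metrizable. To handle the general case I would invoke the structure theory of the first Borel class from the cited references: a bounded function of the first Borel class is a uniform limit of simple functions $s=\sum_i c_i\chi_{A_i}$ whose level sets $A_i$ are ambiguous sets of the first class, for which $\nu\mapsto\nu(A_i)$ is a pointwise limit of weak$^*$-continuous functions on $rB_{\M(K,\ef)}$; since $\abs{\wh{s}(\nu)-\wh f(\nu)}\le\norm{s-f}_{\sup}\,r$ on the ball, a uniform approximation of $f$ yields a uniform approximation of $\wh f$, and the first Borel class is stable under uniform limits. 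Establishing the behaviour of these building blocks and invoking this stability is the heart of the matter, whereas the reduction in the first step is routine once \eqref{application} is available.
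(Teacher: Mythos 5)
Your first step coincides with the paper's own proof: the paper likewise writes $(\wh f\otimes e)(\mu)=\wh f(\la\mu,e\ra)$, observes that $\mu\mapsto\la\mu,e\ra$ is weak$^*$-weak$^*$ continuous and does not increase norm, and thereby reduces everything to the scalar statement that $\wh f$ is of the first Borel class on $rB_{\M(K,\ef)}$. The difference is that the paper takes this scalar statement as a black box, citing \cite[Lemma 2.4]{rondos-spurny-spaces}, whereas you attempt to prove it; your treatment of the metrizable case (uniformly bounded continuous approximants plus dominated convergence) and your remark that the first Borel class is stable under uniform limits are both correct.

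However, the mechanism you propose for the general case contains a false step, and it sits exactly at the point you yourself call the heart of the matter. It is not true that for an ambiguous first-class set $A\subset K$ the functional $\nu\mapsto\nu(A)$ is a pointwise limit of a sequence of weak$^*$-continuous functions on $rB_{\M(K,\ef)}$. Take $K=[0,\omega_1]$ with the order topology and $A=\{\omega_1\}$; then $A$ is closed and $K\setminus A$ is a difference of closed sets, so $A$ is ambiguous of the first class. If weak$^*$-continuous functions $\Phi_n$ converged pointwise on $B_{\M(K,\ef)}$ to $\nu\mapsto\nu(A)$, then composing with the continuous embedding $x\mapsto\ep_x$ would express $\chi_{\{\omega_1\}}$ as a pointwise limit of a sequence of continuous functions on $[0,\omega_1]$. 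That is impossible: every continuous scalar function on $[0,\omega_1]$ is constant on some tail $[\alpha_n,\omega_1]$, hence any pointwise limit of a sequence of them is constant on the tail $[\sup_n\alpha_n,\omega_1]$, on which $\chi_{\{\omega_1\}}$ is not constant. The same example shows that $\wh{\chi_A}$ is of the first Borel class on the ball (as Lemma \ref{first-borel} asserts) without being of the first \emph{Baire} class there; this is precisely why the paper and the references it builds on work with the preimage notion throughout and never pass through pointwise limits of sequences of continuous functions. The true building-block statement --- that $\nu\mapsto\nu(A)$ is of the first Borel class on $rB_{\M(K,\ef)}$ when $A$ is a difference of closed sets --- requires tools of a different kind (for instance, one can use that $\nu\mapsto\nu(F)$ is upper semicontinuous on positive measures for closed $F$, and transfer the Borel class from the compact set of Jordan decompositions down the continuous surjection $(\nu_1,\nu_2)\mapsto\nu_1-\nu_2$ via the Holick\'y--Spurn\'y theorem already quoted in Lemma \ref{pi}(b)). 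This is the content of \cite[Lemma 2.4]{rondos-spurny-spaces}, which your proposal in effect invokes, but in a strengthened form in which it is false.
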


\begin{proof}
For $\mu \in \M(K, \E)$ we have
\[(\wh{f} \otimes e)(\mu)=\la \mu, f \otimes e \ra=\langle \mu, e \rangle(f)=\wh{f}(\langle \mu, e \rangle).\]
Since the bounded function $f$ is of the first Borel class, the function $\wh{f}$ is of the first Borel class on any ball $rB_{\M(K, \ef)}$, $r>0$, see \cite[Lemma 2.4]{rondos-spurny-spaces}. Moreover, the mapping $\mu \mapsto \langle \mu, e \rangle$ is weak$^*$-weak$^*$ continuous, and does not increase norm. Hence the assertion follows. 	
\end{proof}

If $K$ is locally compact Hausdorff space and $\H$ is a closed subspace of $\C_0(K, E)$, then we say that a point $x \in K$ is a \emph{weak peak point} (with respect to $\H$) if for each $\ep \in (0, 1)$, $e \in E$ and a neighbourhood $U$ of $x$ there exists a function $h \in B_{\A}$ (where $\A$ is the canonical scalar function space associated to $\H$) such that $h(x)>1-\ep$, $\vert h \vert \leq \ep$ on $\Ch_{\H} K \setminus U$ and $h \otimes e \in \H$.

\begin{remarks}
\label{remarks}
\begin{itemize}
\item[(i)]	
If $E$ is the scalar field, then $\A=\H$ and the above definition coincides with the definition of the weak peak points of scalar subspaces.
\item[(ii)]If $K$ is a locally compact Hausdorff space and $\H=\C_0(K, E)$ then $\A=\C_0(K, \ef)$, $\Ch_{\H} K=K$ and by the Urysohn Lemma, each point of $K$ is a weak peak point.
\item[(iii)]If $X$ is a compact convex set in a locally convex space and $\H=\fra(X, E)$, the space of all affine $E$-valued continuous functions, then $\A=\fra(X, \ef)$, $\Ch_{\H} X=\ext X$ and a point $x \in \ext X$ is a weak peak point with respect to $\H$ if and only if it is a weak peak point in the sense of \cite[p. 73]{cc}.
\item[(iv)] More generally, if $\H$ is a closed subspace of $\C_0(K, E)$ such that $\e(h) \otimes e \in \H$ whenever $h \in \H$, $e \in E$ and $\e \in \E$, then the set of weak peak points of $\H$ coincides with the set of weak peak points of its canonical scalar function space $\A$.
\end{itemize}
\end{remarks}

Next we check that as in the scalar case, each weak peak point belongs to the Choquet boundary.

\begin{lemma}
\label{weakpeakchoquet}
Let $\H$ be a closed subspace of $\C(K, E)$ for some compact space $K$ and a Banach space $E$ and $x \in K$ be a weak peak point. Then $x \in \Ch_{\H} K$. 
\end{lemma}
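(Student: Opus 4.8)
The plan is to prove the statement by showing directly that $i(x)\in\ext B_{\A^*}$, where $\A$ is the canonical scalar function space associated to $\H$ and $i\colon K\to B_{\A^*}$ is the evaluation map; by definition $\Ch_\H K$ is exactly the set of points with $i(x)$ extreme. So I would assume $i(x)=\frac12(s_1+s_2)$ for some $s_1,s_2\in B_{\A^*}$ and aim to force $s_1=s_2=i(x)$. The first step is to replace these functionals by measures carried by the closure of the Choquet boundary: applying Lemma~\ref{repre} to the scalar space $\A$ (i.e.\ with $E=\ef$, so that its canonical scalar space is $\A$ itself and its Choquet boundary is $\Ch_\H K$), I obtain $\mu_1,\mu_2\in\M(\ov{\Ch_\H K},\ef)$ with $\mu_j=s_j$ on $\A$ and $\norm{\mu_j}=\norm{s_j}\le 1$. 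Setting $\mu=\frac12(\mu_1+\mu_2)$, I get $\mu\in B_{\M(\ov{\Ch_\H K},\ef)}$ with $\mu(a)=a(x)$ for every $a\in\A$.

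The heart of the argument is to use the weak peak property to show $\mu=\ep_x$. Fix $\ep\in(0,1)$ and a neighborhood $U$ of $x$, and by regularity of the compact Hausdorff space $K$ choose an open $W$ with $x\in W\subset\ov W\subset U$. Since $x$ is a weak peak point there is $h\in B_\A$ with $\abs{h(x)}>1-\ep$ and $\abs h\le\ep$ on $\Ch_\H K\setminus W$; by continuity $\abs h\le\ep$ on $\ov{\Ch_\H K\setminus W}$, which contains $\ov{\Ch_\H K}\setminus\ov W$. Splitting $\abs{h(x)}=\abs{\mu(h)}\le\int\abs h\,d\abs{\mu}$ over $\ov W$ and its complement and estimating by the variation then gives $1-\ep<\abs{\mu}(\ov W)+\ep$, whence $\abs{\mu}\bigl(\ov{\Ch_\H K}\setminus U\bigr)\le\abs{\mu}\bigl(\ov{\Ch_\H K}\setminus\ov W\bigr)<2\ep$. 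Letting $U$ shrink to $x$ and $\ep\to 0$, and invoking inner regularity of $\abs{\mu}$ on compact subsets of $\ov{\Ch_\H K}\setminus\{x\}$, I conclude $\abs{\mu}\bigl(\ov{\Ch_\H K}\setminus\{x\}\bigr)=0$, so $\mu=c\,\ep_x$ for some scalar $c$. Testing against $h$, for which $\abs{h(x)}>0$, forces $c=1$, i.e.\ $\mu=\ep_x$.

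Finally, I would conclude through extremality of the Dirac measure. Since $\frac12(\mu_1+\mu_2)=\ep_x$, the points $\mu_1,\mu_2$ lie in $B_{\M(\ov{\Ch_\H K},\ef)}=B_{\C(\ov{\Ch_\H K},\ef)^*}$, whose extreme points are precisely the scalar multiples $\lambda\ep_y$ with $\abs{\lambda}=1$; as $\ep_x$ is such an extreme point, I get $\mu_1=\mu_2=\ep_x$. Restricting back to $\A$ yields $s_1=s_2=i(x)$, so $i(x)\in\ext B_{\A^*}$ and therefore $x\in\Ch_\H K$.

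I expect the main obstacle to be the passage in the second step from the Choquet boundary to its closure: the weak peak condition controls $h$ only on $\Ch_\H K$, whereas the representing measure $\mu$ is carried by $\ov{\Ch_\H K}$. The refinement $x\in W\subset\ov W\subset U$ is exactly what bridges this gap, since it guarantees that every point of $\ov{\Ch_\H K}$ lying outside $U$ is a limit of points of $\Ch_\H K$ lying outside $W$, so that the estimate $\abs h\le\ep$ propagates to the closure by continuity; everything else is a routine estimate with the variation norm together with the standard description of the extreme points of the dual ball of a space of continuous functions.
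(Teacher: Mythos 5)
Your proof is correct and follows essentially the same route as the paper's: representing measures carried by $\ov{\Ch_{\H} K}$ obtained from the scalar version of Lemma~\ref{repre}, the weak-peak estimate forcing the averaged representing measure to equal $\ep_x$, and extremality of Dirac measures in the dual ball of $\C(\ov{\Ch_{\H} K},\ef)$ to conclude $s_1=s_2=i(x)$. The only differences are organizational (the paper first proves, as a standalone step, that every norm-at-most-one $\A$-representing measure carried by $\ov{\Ch_{\H} K}$ is $\ep_x$, and then runs the extreme-point argument) and your use of an intermediate open set $W$ with $\ov{W}\subset U$ where the paper simply takes $U$ closed; both handle the passage from $\Ch_{\H} K$ to its closure identically.
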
 
\begin{proof}
Suppose that $x$ is a weak peak point. Let $\A$ be the canonical scalar function space associated to $\H$ and suppose that $\mu \in \M(\ov{\Ch_{\H} K}, \ef)$ with $\norm{\mu}\leq 1$ is a measure $\A$-representing the point $x$ in the sense that
\[h(x)=\mu(h), \quad h \in \A.\]
We fix an arbitrary closed neighborhood $U$ of $x$ and $\varepsilon>0$. Then there is a function $h \in B_{\A}$ satisfying
$$h(x)>1-\ep \quad \text{and} \quad \vert h \vert < \ep \text{ on } \Ch_{\H} K \setminus U.$$
Since $h$ is continuous and $U$ is closed, it even holds that
$\vert h \vert \leq \ep$ on the set $\overline{\Ch_{\H} K} \setminus U \subset \overline{\Ch_{\H} K \setminus U}$.
So, we have
\begin{equation}
\nonumber
\begin{aligned}
&1-\ep < h(x)=\int_K h~d\mu \leq \int_K \vert h \vert ~d\abs{\mu} =\int_{\overline{\Ch_{\H} K}} \vert h \vert ~d\abs{\mu}
=\\&=
\int_{\overline{\Ch_{\H} K} \cap U} \vert h \vert ~d\abs{\mu} + \int_{\overline{\Ch_{\H} K} \setminus U} \vert h\vert ~d\abs{\mu} \leq \abs{\mu}(U)+\ep.
\end{aligned}
\end{equation}

In other words, $\abs{\mu}(U)>1-2\ep$. Since $\ep>0$ is chosen arbitrarily, we have that $\abs{\mu}(U)=1$. Hence $\abs{\mu}(V)=1$ for each closed neighborhood $V$ of $x$. Since $\norm{\mu}\leq 1$, from this it easily follows that $\mu=\lambda\ep_x$ for some $\lambda \in S_{\ef}$. Thus
\[h(x)=\mu(h)=\lambda h(x), \quad h \in \A.\]
Since $x$ is a weak peak point, there exists a function $h \in \A$ such that $h(x) \neq 0$, which implies that $\lambda=1$.

We claim that from here it follows that $x$ belongs to the Choquet boundary of $K$. Indeed, suppose that $i(x)=\frac{1}{2}s_1+\frac{1}{2}s_2$ for some $s_1, s_2 \in B_{\A^*}$, where $i:K \rightarrow B_{\A*}$ is the evaluation mapping. By the scalar version of Lemma \ref{repre} there exist measures $\mu_1, \mu_2 \in \M(\ov{\Ch_{\H} K}, \ef)$ extending $s_1$ and $s_2$ respectively on $\A$ with $\norm{\mu_i}=\norm{s_i}$ for $i=1, 2$. Then $\mu=\frac{1}{2}\mu_1+\frac{1}{2}\mu_2$ $\A$-represents the point $x$. Thus we know from above that $\mu=\ep_x$. Since the set of Dirac measures is contained in the set of extreme points of $B_{\M(K, \ef)}$, it follows that also $\mu_1$ and $\mu_2$ are equal to $\ep_x$. From this it follows that $s_1=s_2=i(x)$.
\end{proof}

For a closed subspace $\H \subset \C(K, E)$ we write $\H^\perp$ for the set of all vector measures $\mu \in \M(K, \E)$ that are identically zero on $\H$.
 
\begin{lemma}\label{chix}
Let $\H$ be a closed subspace of $\C(K, E)$ for some compact space $K$ and a Banach space $E$. Let $x\in K$ be a weak peak point. Then for any
\[
\mu\in \M(\ov{\Ch_{\H} K}, \E)\cap \H^\perp
\]
holds 
\[\la \mu, \chi_{\{x\}} \otimes e \ra=0, \quad e \in S_E.\]
\end{lemma}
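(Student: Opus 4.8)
The plan is to reduce the vector-valued assertion to a scalar atomicity statement and then kill the putative atom using the peaking functions supplied by the weak peak point property. Fix $e\in S_E$ and set $\lambda=\la\mu,e\ra\in\M(\ov{\Ch_{\H} K},\ef)$; this is a \emph{regular} scalar measure (by the definition of regularity of $\mu$) with $\norm{\lambda}\le\norm{\mu}$, since $\norm{e}=1$. By \eqref{application}, applied with the bounded Borel function $f=\chi_{\{x\}}$, we have $\la\mu,\chi_{\{x\}}\otimes e\ra=\lambda(\chi_{\{x\}})=\lambda(\{x\})$, so it suffices to prove that $\lambda(\{x\})=0$.

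First I would fix $\ep\in(0,1)$ and shrink a neighborhood of $x$ so that $\lambda$ charges it negligibly off $\{x\}$. Using outer regularity of the variation $\abs{\lambda}$, namely $\abs{\lambda}(\{x\})=\inf\{\abs{\lambda}(V):V\supset\{x\}\text{ open}\}$, I choose an open $V\supset\{x\}$ with $\abs{\lambda}(V\setminus\{x\})<\ep$, and then, since a compact Hausdorff space is regular, a closed neighborhood $U$ of $x$ with $U\subseteq V$, so that $\abs{\lambda}(U\setminus\{x\})<\ep$. Next I apply the weak peak point property to $\ep$, $e$ and $U$ to obtain $h\in B_{\A}$ with $h(x)>1-\ep$, $\abs{h}\le\ep$ on $\Ch_{\H} K\setminus U$, and $h\otimes e\in\H$. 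Exactly as in the proof of Lemma~\ref{weakpeakchoquet}, continuity of $h$ and closedness of $U$ upgrade the last bound to $\abs{h}\le\ep$ on $\ov{\Ch_{\H} K}\setminus U$.

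The heart of the argument is the estimate. Since $\mu\in\H^\perp$ and $h\otimes e\in\H$, equation \eqref{application} gives $0=\la\mu,h\otimes e\ra=\lambda(h)=\int_{\ov{\Ch_{\H} K}}h\,d\lambda$. Isolating the point mass yields $\lambda(\{x\})\,h(x)=-\int_{\ov{\Ch_{\H} K}\setminus\{x\}}h\,d\lambda$, and splitting the remaining integral over $U\setminus\{x\}$ and $\ov{\Ch_{\H} K}\setminus U$ produces
\[
\abs{\lambda(\{x\})}(1-\ep)\le\int_{U\setminus\{x\}}\abs{h}\,d\abs{\lambda}+\int_{\ov{\Ch_{\H} K}\setminus U}\abs{h}\,d\abs{\lambda}\le\abs{\lambda}(U\setminus\{x\})+\ep\norm{\lambda}\le\ep(1+\norm{\mu}),
\]
where I use $\abs{h}\le1$ on $U\setminus\{x\}$, $\abs{h}\le\ep$ off $U$, and $\norm{\lambda}\le\norm{\mu}$ (reading $h(x)>1-\ep$ as $\abs{h(x)}>1-\ep$ in the complex case). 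Letting $\ep\to0$ forces $\lambda(\{x\})=0$, which is exactly what was to be shown.

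The main obstacle I anticipate is controlling the contribution over $U\setminus\{x\}$, where the peaking function $h$ is \emph{not} required to be small. This is precisely what the preliminary shrinking of $U$ handles: regularity of the scalar measure $\la\mu,e\ra$ lets us arrange $\abs{\lambda}(U\setminus\{x\})<\ep$ before invoking the weak peak point property, after which the two tail terms both become $O(\ep)$ and the remaining computation is routine. The only other points meriting care are the regularity/normality step that produces a genuine closed neighborhood $U\subseteq V$, and the passage from $\Ch_{\H} K\setminus U$ to $\ov{\Ch_{\H} K}\setminus U$ via continuity, both of which are standard.
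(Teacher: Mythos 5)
Your proof is correct and follows essentially the same route as the paper's: scalarizing via $\lambda=\la \mu, e\ra$ and isolating the atom at $x$ is just the scalar form of the paper's decomposition $\mu=\psi\ep_x+\nu$ with $\nu(\{x\})=0$, and your outer-regularity choice of $U$ with $\abs{\lambda}(U\setminus\{x\})<\ep$ matches the paper's choice of a closed neighborhood with $\abs{\la \nu,e\ra}(U)\le\ep$. The concluding estimate (peak function plus $\mu\in\H^\perp$, splitting the integral over $U\setminus\{x\}$ and over $\ov{\Ch_{\H} K}\setminus U$) is the same in both, so no further comparison is needed.
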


\begin{proof}
Let $\mu \in \M(\ov{\Ch_{\H} K}, \E)\cap \H^\perp, e \in S_E$ be arbitrary and $\ep \in (0,1)$ be given. We write $\mu=\psi\ep_x+\nu$, where $\psi \in \E$ and $\nu(\lbrace x \rbrace)=0$. Let $U$ be a closed neighbourhood of $x$ such that $\vert \langle \nu, e \rangle \vert(U) \leq \ep$. We find $h \in B_{\A}$, where $\A$ is the canonical scalar function space of $\H$, such that $h(x)>1-\ep$, $\vert h \vert \leq \ep$ on $\Ch_{\H} K \setminus U$ and $h \otimes e \in \H$. Then $\abs{h}\le \ep$ on
\[
\ov{\Ch_{\H} K}\setminus U\subset \ov{\Ch_{\H} K\setminus U},
\]
and thus
\[
\begin{aligned}
\abs{\la \mu, \chi_{\{x\}} \otimes e \ra}&=\abs{\la \mu, \chi_{\{x\}} \otimes e \ra-\la \mu, h \otimes e \ra}\le\abs{\psi\ep_x(\chi_{\{x\}}\otimes e-h\otimes e)}+\abs{\la \nu, h\otimes e \ra}\\
&\le \abs{\la \psi, e \ra}(1-h(x))+\int_{\ov{\Ch_{\H} K}\cap U} \abs{h}\di\abs{\langle \nu, e \rangle}+\int_{\ov{\Ch_{\H} K}\setminus U} \abs{h}\di\abs{\langle \nu, e \rangle}\\
&\le \Vert \psi \Vert \ep+\ep+\ep\norm{\langle \nu, e \rangle}\le \ep(1+\norm{\mu}).
\end{aligned}
\]
Hence $\la \mu, \chi_{\{x\}} \otimes e \ra=0$. 
\end{proof}

\begin{lemma}\label{faktor}
Let $\H$ be a closed subspace of $\C(K,E)$ for some compact space $K$ and a Banach space $E$ and let $\pi\colon \M(K, \E)\to \H^*$ be the restriction mapping. Let $x\in K$ be a weak peak point and $e \in S_E$. For each $\mu\in \M(K,\E)$ we define $(\wh{\chi_{\{x\}}} \otimes e)(\mu)=\la \mu, \chi_{\{x\}} \otimes e \ra$.  Then there exists $a_{x, e}^{**}\in \H^{**}$ such that \[\la a_{x,e}^{**}, \pi(\mu) \ra=(\wh{\chi_{\{x\}}} \otimes e)(\mu)\] for any measure $\mu \in \M(K, \E)$ carried by $\ov{\Ch_{\H} K}$.

Moreover, if $x_1$ and $x_2$ are distinct weak peak points in $K$, $e_1, e_2 \in S_E$ and $\alpha_1, \alpha_2 \in \ef$ are arbitrary, then
$\norm{\alpha_1 a_{x_1,e_1}^{\ast\ast}+\alpha_2 a_{x_2,e_2}^{\ast\ast}} = \max \lbrace \abs{\alpha_1}, \abs{\alpha_2} \rbrace$.
\end{lemma}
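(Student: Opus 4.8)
The plan is to build $a_{x,e}^{**}$ by factoring the functional $\wh{\chi_{\{x\}}}\otimes e$ through the restriction map $\pi$, and then to read off the norm identity from an explicit formula for the action of the resulting functional on representing measures.

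First I would work on the space $N:=\M(\ov{\Ch_{\H} K}, \E)$ of vector measures carried by $\ov{\Ch_{\H} K}$ and consider the linear functional $\Phi\colon N\to\ef$ given by $\Phi(\mu)=\la \mu, \chi_{\{x\}}\otimes e\ra=\la\mu(\{x\}),e\ra$. Since $\norm{\mu(\{x\})}\le \abs{\mu}(\{x\})\le\norm{\mu}$ and $\norm e=1$, the functional $\Phi$ is bounded with $\norm\Phi\le 1$. The crucial input is Lemma~\ref{chix}, which guarantees that $\Phi$ vanishes on $N\cap\H^\perp$; this is precisely the kernel of $\pi|_N$. Now $\pi|_N\colon N\to\H^*$ is a quotient map, and by Lemma~\ref{repre} every $s\in\H^*$ has a preimage $\mu\in N$ with $\norm\mu=\norm s$, so the induced map $N/(N\cap\H^\perp)\to\H^*$ is an isometry. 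Consequently $\Phi$ factors through this quotient and yields a well-defined functional $a_{x,e}^{**}\in\H^{**}$ with $\norm{a_{x,e}^{**}}\le 1$, satisfying $\la a_{x,e}^{**},\pi(\mu)\ra=\la\mu,\chi_{\{x\}}\otimes e\ra$ for every $\mu\in N$, which is exactly the asserted identity. I expect this factorization step to be the conceptual heart of the argument: everything hinges on the vanishing property from Lemma~\ref{chix}, without which $\Phi$ would not descend to $\H^*$.

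For the norm identity, set $a^{**}:=\alpha_1 a_{x_1,e_1}^{**}+\alpha_2 a_{x_2,e_2}^{**}$. By linearity and the defining identity, for every $\mu\in N$ one has
\[
\la a^{**},\pi(\mu)\ra=\alpha_1\la\mu(\{x_1\}),e_1\ra+\alpha_2\la\mu(\{x_2\}),e_2\ra.
\]
Since $x_1\ne x_2$, the singletons $\{x_1\}$ and $\{x_2\}$ are disjoint, so additivity of the variation gives $\norm{\mu(\{x_1\})}+\norm{\mu(\{x_2\})}\le\abs\mu(\{x_1,x_2\})\le\norm\mu$. For each $s\in B_{\H^*}$ I pick, via Lemma~\ref{repre}, a representing measure $\mu\in N$ with $\pi(\mu)=s$ and $\norm\mu=\norm s\le 1$; then
\[
\abs{\la a^{**},s\ra}\le\max\lbrace\abs{\alpha_1},\abs{\alpha_2}\rbrace\bigl(\norm{\mu(\{x_1\})}+\norm{\mu(\{x_2\})}\bigr)\le\max\lbrace\abs{\alpha_1},\abs{\alpha_2}\rbrace,
\]
which gives the upper bound $\norm{a^{**}}\le\max\lbrace\abs{\alpha_1},\abs{\alpha_2}\rbrace$.

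For the matching lower bound I would test $a^{**}$ against concrete measures. Fix $j\in\{1,2\}$; by the Hahn--Banach theorem choose $\psi\in S_{\E}$ with $\la\psi,e_j\ra=1$, and recall that each weak peak point lies in $\Ch_{\H} K\subset\ov{\Ch_{\H} K}$ by Lemma~\ref{weakpeakchoquet}, so that $\mu:=\psi\ep_{x_j}\in N$ with $\norm\mu=1$. Because $x_1\ne x_2$ we have $\mu(\{x_i\})=\psi$ when $i=j$ and $\mu(\{x_i\})=0$ otherwise, whence $\la a^{**},\pi(\mu)\ra=\alpha_j$ while $\pi(\mu)\in B_{\H^*}$. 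This forces $\norm{a^{**}}\ge\abs{\alpha_j}$ for $j=1,2$, hence $\norm{a^{**}}\ge\max\lbrace\abs{\alpha_1},\abs{\alpha_2}\rbrace$, completing the identity. The only real bookkeeping here is the disjointness of the two singletons; the weak peak point hypothesis enters only indirectly, through membership in the Choquet boundary and through Lemma~\ref{chix} used in the construction.
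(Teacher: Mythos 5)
Your proposal is correct and follows essentially the same route as the paper: you construct $a_{x,e}^{**}$ by factoring $\wh{\chi_{\{x\}}}\otimes e$ through $\pi$ using the vanishing property of Lemma~\ref{chix} together with the norm-preserving liftings of Lemma~\ref{repre}, prove the upper bound by evaluating on such a lifting and using disjointness of the singletons, and get the lower bound by testing against $\pi(\psi\ep_{x_j})$ with a Hahn--Banach functional $\psi$ normed at $e_j$, exactly as the paper does with its measure $\e\ep_{x_1}$. Your quotient-map phrasing of the factorization and your testing at both points $x_1,x_2$ (rather than the paper's reduction to $\abs{\alpha_1}\ge\abs{\alpha_2}$) are only cosmetic differences.
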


\begin{proof}
The element $\wh{\chi_{\{x\}}} \otimes e$ is contained in $(\M(K, \E))^*$. In order to find the required element $a_{x,e}^{**}\in \H^{**}$ it is enough to realize that for any $\mu\in \M(\ov{\Ch_{\H} K},\E)\cap \H^\perp$ we have $\la \mu, \chi_{\{x\}} \otimes e \ra=0$ (see Lemma~\ref{chix}). Thus for $s\in \H^*$ we can define
\[
\la a_{x,e}^{**}, s \ra=(\wh{\chi_{\{x\}}} \otimes e)(\mu),
\]
where $\mu$ is an arbitrary measure in $\pi^{-1}(s)$ carried by $\ov{\Ch_{\H} K}$ (see Lemma \ref{repre}).

Now suppose that $x_1$ and $x_2$ are distinct weak peak points in $K$, $e_1, e_2 \in S_E$ and $\alpha_1, \alpha_2 \in \ef$. If $s \in \H^\ast$, then by Lemma \ref{repre} there exists $\mu \in \pi^{-1}(s) \cap \M(\ov{\Ch_{\H} K}, \E)$ with $\norm{\mu}=\norm{s}$, and we have
\begin{equation}
\nonumber
\begin{aligned}
&\abs{\la \alpha_1 a_{x_1,e_1}^{\ast\ast}+\alpha_2 a_{x_2,e_2}^{\ast\ast}, s \ra}=\abs{(\alpha_1 (\wh{\chi_{\{x_1\}}} \otimes e_1)+\alpha_2( \wh{\chi_{\{x_2\}}} \otimes e_2))(\mu)}
=\\&=
\abs{\la \mu, \alpha_1 (\chi_{\{x_1\}} \otimes e_1)+\alpha_2 (\chi_{\{x_2\}} \otimes e_2) \ra}=\abs{\alpha_1 \la \mu(\lbrace x_1 \rbrace), e_1 \ra+\alpha_2\la \mu(\lbrace x_2 \rbrace), e_2 \ra}
\leq \\& \leq 
\abs{\alpha_1}\abs{\la \mu(\lbrace x_1 \rbrace), e_1 \ra}+\abs{\alpha_2}\abs{\la \mu(\lbrace x_2 \rbrace), e_2 \ra} 
\leq \\& \leq
 \max \lbrace \abs{\alpha_1}, \abs{\alpha_2} \rbrace (\norm{\mu (\lbrace x_1 \rbrace)} \norm{e_1}+\norm{\mu (\lbrace x_2 \rbrace)} \norm{e_2}) \leq \max \lbrace \abs{\alpha_1}, \abs{\alpha_2} \rbrace \norm{\mu}
 =\\&=
 \max \lbrace \abs{\alpha_1}, \abs{\alpha_2} \rbrace \norm{s}.
\end{aligned}
\end{equation}
To prove the reverse inequality, first observe that for $x \in \ov{\Ch_{\H} K}$, $\e \in B_{\E}$ and $h \in \H$ we have
\begin{equation}
\nonumber
\la \pi(\e\ep_x), h \ra=\la \e\ep_x, h \ra=\ep_x(\e(h))=\la \e, h(x) \ra=\la \phi(x, \e), h \ra.
\end{equation} 
Thus 
\begin{equation}
\label{projekt}
\pi(\e\ep_x)=\phi(x, \e)~\text{in}~\H^*.
\end{equation}
Now, suppose that $\abs{\alpha_1} \geq \abs{\alpha_2}$. There exists $\e \in S_{\E}$ such that $\la \e, e_1 \ra=1$. The measure $\e\ep_{x_1}$ is carried by $\ov{\Ch_{\H} K}$, thus by the definition of $a_{x_1, e_1}^{**}$, \eqref{aplikace} and \eqref{projekt} we have
\begin{equation}
\nonumber
\begin{aligned}
&\norm{\alpha_1 a_{x_1,e_1}^{\ast\ast}+\alpha_2 a_{x_2,e_2}^{\ast\ast}} \geq \abs{\la \alpha_1 a_{x_1,e_1}^{\ast\ast}+\alpha_2 a_{x_2,e_2}^{\ast\ast}, \phi(x_1, \e)\ra}
=\\&=
\abs{(\alpha_1 (\wh{\chi_{\{x_1\}}} \otimes e_1)+\alpha_2( \wh{\chi_{\{x_2\}}} \otimes e_2))(\e\ep_{x_1})}=\abs{\alpha_1}\abs{\la \e, e_1 \ra}=\abs{\alpha_1}.
\end{aligned}
\end{equation}
The proof is finished.
\end{proof}

\begin{lemma}
\label{pi}
Let $\H$ be a closed subspace of $\C(K, E)$ for some compact space $K$ and a Banach space $E$ and $\pi\colon \M(K,\E)\to \H^*$ be the restriction mapping.
Let $\wh{f}\in \M(K,\E)^*$ and $a^{**}\in \H^{**}$ satisfy $\la \wh{f}, \mu \ra=\la a^{**},\pi(\mu) \ra$ for $\mu \in \M(\ov{\Ch_{\H} K}, \E)$.
\begin{itemize}
\item [(a)] Then for any $s\in \H^*$ and $\mu\in \pi^{-1}(s) \cap \M(\ov{\Ch_{\H} K}, \E)$ holds
\[
\la a^{**}, s\ra=\la \wh{f}, \mu \ra.
\]
\item [(b)] For any $r>0$, if $\wh{f}$ is of the first Borel class on $rB_{\M(\ov{\Ch_{\H} K},\E)}$, then $a^{**}$ is of the first Borel class on $rB_{\H^*}$.
\end{itemize}
\end{lemma}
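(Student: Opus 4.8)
The first assertion is a direct consequence of the hypothesis. Given $s\in \H^*$ and $\mu\in \pi^{-1}(s)\cap \M(\ov{\Ch_{\H} K}, \E)$, we have $\pi(\mu)=s$ by the choice of $\mu$, and since $\mu$ is carried by $\ov{\Ch_{\H} K}$ the assumed identity $\la \wh{f}, \mu \ra=\la a^{**},\pi(\mu) \ra$ applies and yields $\la \wh{f}, \mu \ra=\la a^{**}, s\ra$. This is exactly (a); no further work is needed, and in particular it shows that the value $\la a^{**}, s\ra$ does not depend on the particular representing measure $\mu$ chosen in $\pi^{-1}(s)\cap \M(\ov{\Ch_{\H} K}, \E)$.

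For (b) the plan is to exhibit $a^{**}$ as a factor of $\wh{f}$ through the restriction map and then invoke the behaviour of first Borel class functions under continuous surjections of compact spaces. Fix $r>0$ and put $X=rB_{\M(\ov{\Ch_{\H} K},\E)}$ and $Y=rB_{\H^*}$, both compact in their respective weak$^*$ topologies. The map $\pi$ is weak$^*$-to-weak$^*$ continuous and does not increase the norm, so $\pi(X)\subset Y$; moreover Lemma~\ref{repre} shows that every $s\in Y$ admits $\mu\in \M(\ov{\Ch_{\H} K}, \E)$ with $\pi(\mu)=s$ and $\norm{\mu}=\norm{s}\le r$, whence $\mu\in X$. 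Thus $\pi|_X\colon X\to Y$ is a continuous surjection between compact spaces. By part (a), applied to each $\mu\in X$ (which is automatically carried by $\ov{\Ch_{\H} K}$), we obtain $\la \wh{f}, \mu \ra=\la a^{**},\pi(\mu) \ra$, that is,
\[
\wh{f}|_{X}=(a^{**}|_{Y})\circ(\pi|_{X}).
\]

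It then remains to pass the first Borel class property from $\wh{f}|_X$ down to $a^{**}|_Y$ through the continuous surjection $\pi|_X$, and this is the crux of the argument. Being a continuous surjection between compact Hausdorff spaces, $\pi|_X$ is a closed map, so images of closed sets are closed and, by complementation, preimages of closed (resp.\ open) sets descend correctly: for an open $U\subset\ef$ the set $(a^{**}|_Y)^{-1}(U)$ has as its preimage the countable union of differences of closed sets $(\wh{f}|_X)^{-1}(U)$, and one wants to transport this to an analogous representation of $(a^{**}|_Y)^{-1}(U)$ in $Y$, which makes $a^{**}|_Y$ of the first Borel class. I would carry this out as the known invariance of the first Borel class under continuous surjections of compact spaces, in the form available in the literature (cf.\ \cite{lmns} and \cite{spurny-amh}). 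The \emph{main obstacle} is precisely this descent: the individual locally closed pieces in the decomposition of $(\wh{f}|_X)^{-1}(U)$ need not be unions of fibres of $\pi|_X$, and it is the closedness of $\pi|_X$ that allows one to reduce to fibre-saturated pieces and push them forward; everything else in the proof is formal.
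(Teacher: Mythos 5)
Your proposal is correct and follows essentially the same route as the paper: part (a) is the same one-line computation, and part (b) factors $\wh{f}$ through the weak$^*$-weak$^*$ continuous surjection $\pi\colon rB_{\M(\ov{\Ch_{\H} K},\E)}\to rB_{\H^*}$ (surjectivity coming from Lemma~\ref{repre}) and then invokes the known invariance of the first Borel class under continuous surjections of compact spaces, which the paper cites as \cite[Theorem 10]{HoSp} and \cite[Theorem~5.26(d)]{lmns}. Your closing heuristic about closed maps and fibre-saturated pieces is not how that cited theorem is actually proved and would not suffice as a self-contained argument, but since you explicitly defer that step to the literature, the proof stands as the paper's does.
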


\begin{proof}
(a) Given $s\in\H^*$ and $\mu\in\pi^{-1}(s) \cap \M(\ov{\Ch_{\H} K}, \E)$, we have
\[
\la a^{**}, s \ra=\la a^{**}, \pi(\mu)\ra=\la \wh{f}, \mu\ra.
\]

(b) For any $r>0$, the mapping $\pi\colon rB_{\M(\ov{\Ch_{\H} K},\E)}\to rB_{\H^*}$ is a weak$^*$-weak$^*$ continuous surjection (see Lemma~\ref{repre}). By \cite[Theorem 10]{HoSp} (see also \cite[Theorem~5.26(d)]{lmns}), if $\wh{f}$ is of the first Borel class on $rB_{\M(\ov{\Ch_{\H} K},\E)}$, $a^{**}$ is of the first Borel class on $rB_{\H^*}$.
\end{proof}

\begin{lemma}
\label{minim}
Let $f\colon X\to \ef$ be an affine function of the first Borel class on a compact convex set $X$. Then
\[
\sup_{x\in X} \abs{f(x)}=\sup_{x\in \ext X}\abs{f(x)}.
\]
\end{lemma}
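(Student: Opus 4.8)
The inequality $\sup_{x\in\ext X}\abs{f(x)}\le \sup_{x\in X}\abs{f(x)}$ is immediate from $\ext X\subset X$, so the whole content lies in the reverse estimate; I may assume $M:=\sup_{x\in\ext X}\abs{f(x)}<\infty$ and, since this is the only case relevant to the applications, that $f$ is bounded. The plan is to fix an arbitrary $x_0\in X$ and prove $\abs{f(x_0)}\le M$. First I would reduce to a real maximum principle: choosing $\lambda\in\ef$ with $\abs{\lambda}=1$ and $\lambda f(x_0)=\abs{f(x_0)}$ and replacing $f$ by $\lambda f$ (which changes neither $\abs f$, nor affinity, nor membership in the first Borel class, nor the value of $M$), I may assume $f(x_0)=\abs{f(x_0)}\ge 0$. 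Then $g:=\re f$ is a real-valued affine function of the first Borel class with $g\le\abs f\le M$ on $\ext X$ and $g(x_0)=f(x_0)$, so it suffices to show $g(x_0)\le M$.

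Next I would invoke Choquet theory. By the Choquet--Bishop--de Leeuw theorem (\cite[Theorem~I.4.8]{alfsen}) there is a maximal measure $\mu\in\M^1(X)$ with $r(\mu)=x_0$. The key analytic input is the barycentric formula for bounded affine functions of the first Borel class: such functions are strongly affine, i.e. $g(r(\nu))=\nu(g)$ for every $\nu\in\M^1(X)$; applying this to $\nu=\mu$ gives
\[
g(x_0)=g(r(\mu))=\mu(g)=\int_X g\,\dd\mu .
\]
It then remains to bound this integral by $M$. Since $g$ is of the first Borel class, the superlevel set $\{g>M\}$ is of type $F_\sigma$, and by the choice of $M$ it is disjoint from $\ext X$. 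Because $\mu$ is maximal it is carried by the extreme points, so $\mu(\{g>M\})=0$; hence $g\le M$ holds $\mu$-almost everywhere and
\[
g(x_0)=\int_X g\,\dd\mu\le M .
\]
Combining this with the first paragraph yields $\abs{f(x_0)}=g(x_0)\le M$, and since $x_0\in X$ was arbitrary the lemma follows.

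The main obstacle is exactly the passage from continuous affine functions --- for which the identity $g(r(\mu))=\mu(g)$ is the very definition of the barycenter --- to merely first-Borel-class ones, and it splits into the two facts used above. The strong affinity of bounded affine functions of the first Borel class must be extracted from the barycentric calculus of integral-representation theory (\cite{lmns}, \cite{spurny-amh}); this is where boundedness of $f$ is genuinely used. The more delicate point is the vanishing $\mu(\{g>M\})=0$: a maximal measure is a priori only guaranteed to annihilate \emph{Baire} sets disjoint from $\ext X$, whereas $\{g>M\}$ is in general merely Borel. This is precisely where the hypothesis that $f$ is of the \emph{first} Borel class (rather than an arbitrary Borel function) is indispensable, as it forces $\{g>M\}$ to be $F_\sigma$ and thereby brings it within the scope of the barycentric calculus for such functions. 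I would therefore isolate the statement $\mu(\{g>M\})=0$ as the technical heart of the argument and cite \cite{lmns} and \cite{spurny-amh} for it.
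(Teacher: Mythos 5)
Your outline is genuinely different from the paper's proof, and it contains a real gap at its central step. The paper disposes of the lemma in two lines: by \cite[Theorem~2.3]{koumou} every function of the first Borel class has the point of continuity property, and \cite[Corollary~1.5]{dosp} is a maximum principle for affine functions with that property; all the difficulty is absorbed into \cite{dosp}. Your route instead runs through maximal measures, and the decisive step --- ``because $\mu$ is maximal it is carried by the extreme points, so $\mu(\{g>M\})=0$'' --- is exactly the point that cannot be justified the way you propose. The Bishop--de~Leeuw theorem gives the vanishing of a maximal measure only on \emph{Baire} sets disjoint from $\ext X$. In a non-metrizable compact convex set a closed set need not be $G_\delta$, hence need not be Baire, so an $F_\sigma$ set is not ``within the scope of the barycentric calculus'' as you assert. (A further wrinkle: with the paper's definition of the first Borel class, $\{g>M\}$ is only a countable union of sets of the form closed-intersect-open, not $F_\sigma$; but this is immaterial, because by inner regularity of Radon measures either version of your claim is equivalent to the assertion that maximal measures vanish on all \emph{compact} sets disjoint from $\ext X$.) That assertion is precisely the known obstruction in this subject: it holds when $X$ is metrizable (where $\ext X$ is $G_\delta$ and the classical Baire-one argument you are emulating goes through) and under extra hypotheses such as $\ext X$ Lindel\"of --- which is exactly why the Lindel\"of assumption appears in \cite{lusppams} --- but it is not available for general compact convex sets, and no reference you name (nor \cite{lmns}, nor \cite{spurny-amh}) supplies it. The need to circumvent this failure is the very reason the point-of-continuity machinery of \cite{dosp} was developed. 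So your proof, as written, silently assumes the hardest part of the lemma and establishes it only in the metrizable (or Lindel\"of-boundary) case.

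Two secondary points, neither of them the fatal one. First, you assume $f$ is bounded, which is not among the hypotheses; once one assumes $M=\sup_{\ext X}\abs{f}<\infty$, boundedness of $f$ on all of $X$ is in effect part of the conclusion, so it cannot simply be added (though it does hold in the paper's applications, where the functions are restrictions of elements of a bidual to dual balls). Second, the strong affinity of bounded affine functions of the first Borel class, which you invoke for the identity $g(x_0)=\mu(g)$, is itself a nontrivial theorem: it is classical for Baire-one functions, but the first Borel class is strictly larger in the non-metrizable setting and this step would need a precise citation. If you want a self-contained proof along Choquet-theoretic lines, it works verbatim only for metrizable $X$; in general you should follow the paper and quote \cite[Theorem~2.3]{koumou} together with \cite[Corollary~1.5]{dosp}.
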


\begin{proof}
The assertion follows from \cite[Corollary~1.5]{dosp} since any function of the first Borel class has the point of continuity property (see \cite[Theorem~2.3]{koumou}).
\end{proof}

\begin{lemma}
	\label{comp}
	Let $\H$ be a closed subspace of $\C_0(K, E)$ for some locally compact space $K$ and a Banach space $E$. Let $J=K\cup\{\alpha\}$ be the one-point compactification of $K$, where $\alpha$ is the point at infinity.
	Let
	\[
	\widetilde{\H}=\{h\in \C(J,E)\colon h|_{K}\in \H\ \&\ h(\alpha)=0\}.
	\]
	Then $\widetilde{\H}$ is a closed subspace of $\C(J,E)$ isometric to $\H$ such that $\Ch_{\H} K$ is homeomorphic to $\Ch_{\widetilde{\H}} J$ and a point $x \in \Ch_{\H} K$ is a weak peak point with respect to $\H$ if and only if it is a weak peak point with respect to $\widetilde{\H}$.
\end{lemma}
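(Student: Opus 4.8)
The plan is to carry out everything through the extension-by-zero operation. First I would introduce the map $\iota\colon \C_0(K,E)\to\C(J,E)$ defined by $(\iota f)|_K=f$ and $(\iota f)(\alpha)=0$; since $f$ vanishes at infinity, $\iota f$ is continuous on $J$, and because $\iota f$ vanishes at $\alpha$ we have $\norm{\iota f}_{\sup}=\norm{f}_{\sup}$, so $\iota$ is a linear isometry. By the very definition of $\widetilde{\H}$, the map $\iota$ sends $\H$ onto $\widetilde{\H}$, whence $\widetilde{\H}=\iota(\H)$ is isometric to $\H$; being the isometric image of the complete (hence closed) space $\H$, it is a closed subspace of $\C(J,E)$. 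This settles the first assertion.

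Next I would identify the canonical scalar function space $\widetilde{\A}$ of $\widetilde{\H}$. For $\e\in\E$ and $h\in\H$, the scalar function $\e(\iota h)$ agrees with $\e(h)$ on $K$ and vanishes at $\alpha$; that is, $\e(\iota h)=\iota_0(\e(h))$, where $\iota_0\colon\C_0(K,\ef)\to\C(J,\ef)$ is the scalar extension by zero. Since $\iota_0$ is an isometric isomorphism onto the closed subspace $\{a\in\C(J,\ef)\colon a(\alpha)=0\}$, it carries closed linear spans to closed linear spans, and therefore $\widetilde{\A}=\iota_0(\A)$.

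I would then compare the two Choquet boundaries. Write $i\colon K\to B_{\A^*}$ and $\widetilde{i}\colon J\to B_{\widetilde{\A}^*}$ for the evaluation maps. The adjoint $\rho$ of the surjective isometry $\iota_0|_\A\colon\A\to\widetilde{\A}$ is an affine weak$^*$-homeomorphism of $B_{\widetilde{\A}^*}$ onto $B_{\A^*}$, and for $x\in K$ and $a\in\A$ one computes $\la \rho(\widetilde{i}(x)),a\ra=\la \widetilde{i}(x),\iota_0 a\ra=(\iota_0 a)(x)=a(x)=\la i(x),a\ra$, so $\rho(\widetilde{i}(x))=i(x)$; on the other hand $\widetilde{i}(\alpha)=0$ because every element of $\widetilde{\A}$ vanishes at $\alpha$. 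Consequently $\widetilde{i}(x)$ is extreme in $B_{\widetilde{\A}^*}$ if and only if $i(x)$ is extreme in $B_{\A^*}$, while $\widetilde{i}(\alpha)=0$ is never extreme (as $\widetilde{\A}\neq\{0\}$). Hence $\Ch_{\widetilde{\H}} J=\Ch_{\H} K$ as subsets of $J$, and since $K$ is open in $J$ the subspace topologies coincide, so the identity is the desired homeomorphism.

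Finally, for the weak peak point equivalence I would exploit that $K$ is open in $J$: neighborhoods of a point $x\in\Ch_{\H} K$ in $K$ and in $J$ correspond under intersection with $K$, and on $\Ch_{\H} K\subset K$ the functions $h$ and $\iota_0 h$ coincide. Given data $\ep, e, U$ witnessing the peak property on one side, applying $\iota_0$ (respectively, restriction to $K$) produces the required $\widetilde{h}=\iota_0 h\in B_{\widetilde{\A}}$ (respectively, $h=\widetilde{h}|_K\in B_{\A}$) with the same values at $x$ and the same bound $\ep$ on $\Ch_{\H} K\setminus U$; the tensor condition transfers because $(\iota_0 h)\otimes e=\iota(h\otimes e)$, so $h\otimes e\in\H$ if and only if $(\iota_0 h)\otimes e\in\widetilde{\H}$. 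I expect no serious obstacle here: the content is essentially bookkeeping, and the only points needing a little care are the identification $\widetilde{\A}=\iota_0(\A)$ and the verification that the adjoined point $\alpha$ neither enters the Choquet boundary nor disturbs the neighborhood correspondence, both of which follow from the openness of $K$ in $J$ together with $\widetilde{f}(\alpha)=0$.
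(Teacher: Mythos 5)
Your proof is correct and follows essentially the same route as the paper: extend by zero to get an isometric isomorphism $\H\to\widetilde{\H}$, identify the canonical scalar space $\widetilde{\A}$ as the extension-by-zero image of $\A$, and transfer the Choquet boundary and the weak peak point property through that identification. The only difference is that the paper delegates the scalar part (isometry of $\A$ and $\widetilde{\A}$ and the homeomorphism of their Choquet boundaries) to a citation of an earlier paper, whereas you prove it inline via the adjoint of the surjective isometry, checking that $\widetilde{i}(\alpha)=0$ is never extreme; your argument is a correct self-contained version of exactly what is being cited.
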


\begin{proof}
	Clearly, any function $h\in\H$ has the unique extension $f_h\in \widetilde{\H}$ and the mapping $h\mapsto f_h$ is an isometric isomorphism. Thus $\widetilde{\H}$ is a closed subspace of $\C(J,E)$.
	
	If $\A$ is the canonical scalar function space of $\H$, then clearly, the canonical scalar function space of $\widetilde{\H}$ is of the form
	\[\widetilde{\A}=\lbrace g \in \C(J, \ef): g|_{K}\in \A\ \&\ g(\alpha)=0\}.\]
	It was proved in \cite[Lemma 2.8]{rondos-spurny-spaces} that the spaces $\A$ and $\widetilde{\A}$ are isometric and Choquet boundaries of $\A$ and $\widetilde{\A}$ are homeomorphic. It is now clear that the respective weak peak points in $\Ch_{\H} K$ and $\Ch_{\widetilde{\H}} J$ coincide. This finishes the proof.
\end{proof}

The next lemma describes a property of the parameter $\lambda$ that is crucial for the proof of the main theorem of \cite{cidralgalegovillamizar}.

\begin{lemma}
	\label{lambda}
	Let $E$ be a Banach space. Let $r \in \en$ and $\eta>0$ be fixed and suppose that $\lbrace e_i \rbrace_{i=1}^{2^r} \subset E$ satisfy $\norm{e_i} \geq \eta$ for each $1 \leq i \leq 2^r$. Then there exist $\lbrace \alpha_i \rbrace_{i=1}^{2^r} \subset \ef$ with $\max \lbrace \abs{\alpha_i}: 1 \leq i \leq 2^r \rbrace \leq 1$ such that
	\[\norm{\sum_{i=1}^{2^r} \alpha_i e_i} \geq \eta \lambda(E)^r.\]
\end{lemma}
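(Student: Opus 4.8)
The plan is to argue by induction on $r$, reducing the inductive step to a single estimate about two vectors that comes straight from the definition of $\lambda(E)$. For the base case $r=0$ there is one vector $e_1$ with $\norm{e_1}\ge\eta$, and the choice $\alpha_1=1$ gives $\norm{\alpha_1 e_1}\ge\eta=\eta\lambda(E)^0$. For the inductive step I would split the $2^{r+1}$ given vectors into the two halves $\{e_i\}_{i=1}^{2^r}$ and $\{e_i\}_{i=2^r+1}^{2^{r+1}}$, apply the induction hypothesis to each half to obtain coefficients of modulus at most $1$ producing vectors $u=\sum_{i\le 2^r}\alpha_i e_i$ and $v=\sum_{i>2^r}\alpha_i e_i$ with $\norm{u},\norm{v}\ge\eta\lambda(E)^r$, and then glue the two combinations together with a single unimodular scalar.

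The heart of the matter is the following two-vector claim: for any nonzero $u,v\in E$ there exists $\beta\in\ef$ with $\abs{\beta}=1$ and $\norm{u+\beta v}\ge\lambda(E)\min\{\norm{u},\norm{v}\}$. Granting it, in the inductive step I pick such a $\beta$ for the vectors $u,v$ above, keep the coefficients $\alpha_i$ on the first half and replace $\alpha_i$ by $\beta\alpha_i$ on the second half; since $\abs{\beta}=1$ all coefficients still have modulus at most $1$, the resulting combination equals $u+\beta v$, and its norm is at least $\lambda(E)\cdot\eta\lambda(E)^r=\eta\lambda(E)^{r+1}$, completing the induction. Note that $u$ and $v$ are genuinely nonzero because $\lambda(E)\ge 1$ forces $\eta\lambda(E)^r>0$.

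The main obstacle is that the definition of $\lambda(E)$ only provides control for two \emph{unit} vectors, whereas $u$ and $v$ will in general have different norms. To prove the two-vector claim I would assume $\norm{u}\le\norm{v}$ (the roles of $u,v$ are symmetric because $\norm{u+\beta v}=\norm{v+\beta^{-1}u}$ and $\abs{\beta^{-1}}=1$), write $p=u/\norm{u}$, $q=v/\norm{v}\in S_E$ and $\rho=\norm{v}/\norm{u}\ge1$, so that $\norm{u+\beta v}=\norm{u}\,\norm{p+\beta\rho q}$. The quantity $g(t)=\max\{\norm{p+\beta t q}:\abs{\beta}=1\}$ is a maximum over the compact set $\{\abs{\beta}=1\}$ of convex functions of $t$, hence convex, and it is even (in the complex case one absorbs phases into $\beta$, while in the real case the substitution $t\mapsto-t$ merely swaps $\beta=\pm1$); a convex even function on $\er$ is non-decreasing on $[0,\infty)$, so $g(\rho)\ge g(1)\ge\lambda(E)$, the last inequality being exactly the definition of $\lambda(E)$ applied to $p,q\in S_E$. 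Choosing $\beta$ attaining the maximum $g(\rho)$ yields $\norm{u+\beta v}\ge\lambda(E)\norm{u}=\lambda(E)\min\{\norm{u},\norm{v}\}$, as required.
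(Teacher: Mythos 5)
Your proof is correct. Note that the paper itself gives no proof of this lemma: it is stated as a known property of Jarosz's parameter, imported from \cite{cidralgalegovillamizar}, so there is no in-paper argument to compare yours against. Your route is the natural one for this statement (and essentially the classical one going back to Jarosz): dyadic induction, splitting the $2^{r+1}$ vectors into two halves and gluing the two inductively obtained combinations with one unimodular scalar. The genuine content you had to supply is precisely the two-vector claim, since the definition $\lambda(E)=\inf\{\max\{\norm{e_1+\lambda e_2}\colon \abs{\lambda}=1\}\colon e_1,e_2\in S_E\}$ only speaks about \emph{unit} vectors, and the vectors $u,v$ produced by the induction hypothesis have unequal norms in general. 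Your handling of this — writing $\norm{u+\beta v}=\norm{u}\,\norm{p+\beta\rho q}$ with $\rho\ge 1$ and observing that $g(t)=\max\{\norm{p+\beta t q}\colon\abs{\beta}=1\}$ is convex (a pointwise maximum of convex functions, attained by compactness of $\{\abs{\beta}=1\}$) and even (reparametrizing $\beta\mapsto-\beta$), hence non-decreasing on $[0,\infty)$, so that $g(\rho)\ge g(1)\ge\lambda(E)$ — is correct and is exactly the right monotonicity statement; it yields $\norm{u+\beta v}\ge\lambda(E)\min\{\norm{u},\norm{v}\}$ as needed. The remaining details (the symmetry reduction to $\norm{u}\le\norm{v}$ via $\norm{u+\beta v}=\norm{v+\beta^{-1}u}$, the fact that $\abs{\beta\alpha_i}=\abs{\alpha_i}\le1$ preserves the coefficient bound, and the nonvanishing of $u,v$ because $\lambda(E)\ge1$ and $\eta>0$) all check out, so your argument stands as a complete, self-contained proof of the cited lemma.
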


%%%%%%%%%%%%%%%%%%%%%%%%%%%%%%%%%%%%%%%%%%%%%%%%%%%%%%%%%%%%%%%%%%%%%%%%%%%%%%%%%%
\section{Isomorphisms with a small bound}

\emph{Proof of Theorem \ref{main}}
We first assume that the spaces $K_1, K_2$ are compact.
Secondly, we suppose that there exists an $\ep>0$ such that $\Vert Tf \Vert \geq (1+\ep)\Vert f \Vert$ for $f \in \H_1$ and $\Vert T \Vert < \min \lbrace \lambda(E_1), \lambda(E_2) \rbrace$ (otherwise we replace $T$ by the isomorphism $(1+\ep)\norm{T^{-1}}T$). We fix $P$ such that $1<P<1+\ep$. Hence $T$ satisfies $\Vert Tf \Vert > P\norm{f}$ for $f \in \H_1, f \neq 0$. 

\emph{Claim 1.: For any $a^{**}\in \H_1^{**}\setminus \{0\}$ and $b^{**}\in \H_2^{**}\setminus \{0\}$ we have $\norm{T^{**}a^{**}}>P\norm{a^{**}}$ and $\norm{(T^{-1})^{**}b^{**}}>\frac{1}{\min \lbrace \lambda(E_1), \lambda(E_2) \rbrace}\norm{b^{**}}$.}

For the proof see \cite[Lemma~4.2]{rondos-spurny}.

For $i=1, 2$, the space $\C(K_i, E_i)^{**}$ is of the form $\C(Z_i, (E_i)_{\sigma^\ast})$, where $Z_i$ is a compact Hausdorff space depending on $K_i$, and $t_i$ denotes the natural mapping from $K_i$ into $Z_i$. We recall that for $f \in \C(Z_i, (E_i)_{\sigma^*})$ and $\e_i \in \E_i$, the function $\e_i(f)$ defined for $z \in Z_i$ as $\e_i(f)(z)=\la \e_i, f(z) \ra$ belongs to $\C(Z_i, \ef)$. 

Further, for $i=1,2$, let $\pi_i\colon \M(K_i,\E_i)\to \H_i^{*}$ be the restriction mapping, $\pi_i^\ast: \H_i^{\ast\ast} \to \M(K_i, \E_i)^* \simeq \C(K_i, E_i)^{**} \simeq \C(Z_i, (E_i)_{\sigma^*})$ be its adjoint mapping and let $\phi_i\colon K_i \times B_{\E_i} \to B_{\H_i^*}$ be the evaluation mapping. It follows from the Hahn-Banach theorem that the mappings $\pi_1^*, \pi_2^*$ are into isometries. By \eqref{projekt}, for each $\s \in B_{\E_1}$, $\t \in B_{\E_2}$, $x \in K_1$ and $y \in K_2$ it holds that
\begin{equation}
\label{projekce}
\pi_1(\s\ep_x)=\phi_1(x, \s)\quad\text{in } \H_1^*\quad \text{and}\quad \pi_2(\t\ep_y)=\phi_2(y, \t) \quad\text{in } \H_2^*.
\end{equation} 

For each $x\in \Ch_{\H_1} K_1$ we consider the function ${\chi}_{\{x\}}$. 
Let for $u \in S_{E_1}$, $\wh{\chi_{\{x\}}} \otimes u \colon \M(K_1, \E_1)\to \ef$ be defined as in Lemma~\ref{first-borel} and let $a_{x, u}^{**}\in \H_1^{**}$ satisfy 
\[\la a_{x, u}^{**}, \pi_1(\mu) \ra=(\wh{\chi_{\{x\}}} \otimes u)(\mu)\]
 for $\mu$ carried by $\ov{\Ch_{\H_1} K_1}$ (see Lemma~\ref{faktor}). Then $a_{x, u}^{**}$ is of the first Borel class on $rB_{\H_1^*}$ for any $r>0$, see Lemma~\ref{first-borel} and~\ref{pi}(b).
Analogously we define for $y\in\Ch_{\H_2} K_2$ and $v \in S_{E_2}$ the function $\wh{\chi_{\{y\}}} \otimes v$ and the element $b_{y, v}^{**}\in \H_2^{**}$.  

We start the proof with the following series of equalities for $x \in \Ch_{\H_1} K_1, y \in \Ch_{\H_2} K_2$, $u \in S_{E_1}$ and $\t \in S_{\E_2}$:

\begin{equation}
\nonumber
\begin{aligned}
&\langle \t, \pi_2^\ast(T^{\ast\ast}(a_{x,u}^{\ast\ast}))(t_2y) \rangle_{\E_2, E_2}
=\t(\pi_2^*(T^{\ast\ast}(a_{x,u}^{\ast\ast})))(t_2y)
=\\&=\la \ep_{t_2y},\t(\pi_2^*(T^{\ast\ast}(a_{x,u}^{\ast\ast}))) \ra_{\M(Z_2, \ef), \C(Z_2, \ef)}
=\\&=^{\eqref{natural}}
\langle \pi_2^\ast(T^{\ast\ast}(a_{x,u}^{\ast\ast})), \t \ep_y \rangle_{\C(K_2, E_2)^{\ast\ast}, \M(K_2, E^*_2)}
=\\&=
\langle T^{\ast\ast}(a_{x,u}^{\ast\ast}), \pi_2(\t \ep_y) \rangle_{\H_2^{\ast\ast},\H_2^{\ast}}
=^{\eqref{projekce}}
\langle T^{\ast\ast}(a_{x,u}^{\ast\ast}), \phi_2(y, \t) \rangle_{\H_2^{\ast\ast},\H_2^{\ast}}.
\end{aligned}
\end{equation}

Suppose that $\mu \in \pi_1^{-1}(T^\ast(\phi_2(y, \t)))$ is a Hahn-Banach extension of $T^\ast(\phi_2(y, \t))$ carried by $\ov{\Ch_{\H_1} K_1}$ written in the form $\mu=\psi \ep_{x}+\nu$. Then we have
\begin{equation}
\nonumber
\begin{aligned}
&\langle T^{\ast\ast}(a_{x,u}^{\ast\ast}), \phi_2(y, \t) \rangle_{\H_2^{\ast\ast},\H_2^{\ast}}=\langle a_{x,u}^{\ast\ast}, T^\ast(\phi_2(y, \t)) \rangle_{\H_1^{**},\H_1^{\ast}}
=\\&=
\langle a_{x,u}^{\ast\ast}, \pi_1(\mu) \rangle_{\H_1^{**},\H_1^{\ast}}=
\langle \wh{\chi_{\{x\}}} \otimes u, \mu \rangle_{\C(K_1, E_1)^{\ast\ast},\M(K_1, E_1^*)} 
=\\&=
\langle \wh{\chi_{\{x\}}} \otimes u, \psi\ep_x+\nu \rangle_{\C(K_1, E_1)^{\ast\ast},\M(K_1, E_1^*)} =\langle \psi, u \rangle_{E_1^*, E_1}=
\langle \mu(\{x\}), u \rangle_{E_1^*, E_1}.
\end{aligned}
\end{equation}

Thus using the above notation, we have
\begin{equation}
\label{duality}
\begin{aligned}
&\la\t, \pi_2^\ast(T^{\ast\ast}(a_{x,u}^{\ast\ast}))(t_2y) \rangle_{\E_2, E_2}=\langle \pi_2^\ast(T^{\ast\ast}(a_{x,u}^{\ast\ast})), \t \ep_y \rangle_{\C(K_2, E_2)^{\ast\ast},\M(K_2, E_2^*)}
=\\&=
\langle T^{\ast\ast}(a_{x,u}^{\ast\ast}), \phi_2(y, \t) \rangle_{\H_2^{\ast\ast},\H_2^{\ast}}=\langle \psi, u \rangle_{\E_1, E_1}=\langle \mu(\{x\}), u \rangle_{\E_1, E_1}.
\end{aligned}
\end{equation}

Similarly, if $v \in S_{E_2}$, $\s \in S_{\E_1}$ and  $\mu \in \pi_2^{-1}((T^\ast)^{-1}(\phi_1(x, \s)))$ is a Hahn-Banach extension of
$(T^\ast)^{-1}(\phi_1(x, \s))$ carried by $\ov{\Ch_{\H_2} K_2}$ in the form $\mu=\psi \ep_{y}+\nu$, then we have the following:
\begin{equation}
\label{duality2}
\begin{aligned}
&\langle  \s, \pi_1^\ast((T^{-1})^{\ast\ast}(b_{y,v}^{\ast\ast}))(t_1x) \rangle_{\E_1, E_1}
=\\&=
\langle \pi_1^\ast((T^{-1})^{\ast\ast}(b_{y,v}^{\ast\ast})), \s \ep_x \rangle_{\C(K_1, E_1)^{\ast\ast},\M(K_1, E_1^*)}
=\\&=
\langle (T^{-1})^{\ast\ast}(b_{y,v}^{\ast\ast}), \phi_1(x, \s) \rangle_{\H_1^{\ast\ast},\H_1^{\ast}}=\la \psi, v \ra_{\E_2, E_2}=\langle \mu(\{y\}), v \rangle_{\E_2, E_2}.
\end{aligned}
\end{equation}
	
\begin{definition}
For $x \in \Ch_{\H_1} K_1$ and $y \in \Ch_{\H_2} K_2$ we define relations $\rho_1$ and $\rho_2$ as follows:
\begin{equation}
\label{rho}
\nonumber
\begin{aligned}
\rho_1(x)=&\{y\in \Ch_{\H_2} K_2, \exists v \in S_{E_2},\exists \s \in S_{\E_1} \colon \\&
\abs{ \langle (T^{\ast\ast})^{-1}(b_{y, v}^{\ast\ast}), \phi_1(x, \s) \rangle} >\frac{1}{\min \lbrace \lambda(E_1), \lambda(E_2) \rbrace} \},\\
\rho_2(y)=&\left\{x\in \Ch_{\H_1} K_1, \exists u \in S_{E_1},\exists \t \in S_{\E_2}: \abs{\langle T^{\ast\ast}(a_{x, u}^{\ast\ast}), \phi_2(y, \t) \rangle} >P\right\}.\\
\end{aligned}
\end{equation}
\end{definition}

In the rest of the proof we show that $\rho_1$ is the desired homeomorphism from $\Ch_{\H_1} K_1$ to $\Ch_{\H_2} K_2$, with $\rho_2$ being its inverse. 

First note that using \eqref{duality} and \eqref{duality2}, we have the following equivalent descriptions of the relations $\rho_1$ and $\rho_2$.
\begin{lemma}
	\label{ekvi}
	Let $x \in \Ch_{\H_1} K_1, y \in \Ch_{\H_2} K_2$. Then the following assertions are equivalent:
	\begin{itemize}
		\item[(i)] \[\rho_2(y)=x;\]
		\item[(ii)] \[\sup_{u \in S_{E_1}, \t \in S_{\E_2}} \abs{\la \t, \pi_2^\ast(T^{\ast\ast}(a_{x,u}^{\ast\ast}))(t_2y) \rangle}>P;\]
		\item[(iii)] there exists a point $\t \in S_{\E_2}$ such that whenever 
		\[\mu \in \pi_1^{-1}(T^\ast(\phi_2(y, \t))) \cap \M(K_1, \E_1)\]
		 is a Hahn-Banach extension of $T^\ast(\phi_2(y, \t))$ which is carried by $\ov{\Ch_{\H_1} K_1}$, then $\norm{\mu(\{x\})}>P$.
	\end{itemize}
\end{lemma}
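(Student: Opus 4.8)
The plan is to reduce everything to the chain of identities recorded in \eqref{duality}, which for fixed $x\in\Ch_{\H_1} K_1$, $y\in\Ch_{\H_2} K_2$, $u\in S_{E_1}$ and $\t\in S_{\E_2}$ asserts that the three scalars $\la\t,\pi_2^\ast(T^{\ast\ast}(a_{x,u}^{\ast\ast}))(t_2y)\ra$, $\la T^{\ast\ast}(a_{x,u}^{\ast\ast}),\phi_2(y,\t)\ra$ and $\la\mu(\{x\}),u\ra$ all coincide, for every Hahn--Banach extension $\mu\in\pi_1^{-1}(T^\ast(\phi_2(y,\t)))$ of $T^\ast(\phi_2(y,\t))$ that is carried by $\ov{\Ch_{\H_1} K_1}$. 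In particular the first two quantities, being free of $\mu$, show that this common value does not depend on the chosen extension. I would open the proof by invoking this identity, so that conditions (i)--(iii) become three ways of saying that one of these scalars can be pushed above $P$.

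For the equivalence of (i) and (ii) I would use only the first of these equalities together with the elementary fact that a supremum of nonnegative reals exceeds $P$ precisely when one of its members does (the index sets $S_{E_1}$ and $S_{\E_2}$ being nonempty by the standing dimension assumption). Indeed, since $\la\t,\pi_2^\ast(T^{\ast\ast}(a_{x,u}^{\ast\ast}))(t_2y)\ra=\la T^{\ast\ast}(a_{x,u}^{\ast\ast}),\phi_2(y,\t)\ra$ for each pair $(u,\t)$, the supremum in (ii) exceeds $P$ exactly when some pair $(u,\t)$ makes the modulus of the right-hand side exceed $P$, which is the defining condition of (i), that is, $x\in\rho_2(y)$.

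The equivalence of (i) and (iii) is where the dual-norm identity $\norm{\mu(\{x\})}=\sup_{u\in S_{E_1}}\abs{\la\mu(\{x\}),u\ra}$ enters. For (i)$\Rightarrow$(iii) I would retain the $\t$ and $u$ witnessing (i) and observe that for \emph{any} extension $\mu$ carried by $\ov{\Ch_{\H_1} K_1}$ one has $\norm{\mu(\{x\})}\ge\abs{\la\mu(\{x\}),u\ra}=\abs{\la T^{\ast\ast}(a_{x,u}^{\ast\ast}),\phi_2(y,\t)\ra}>P$ by \eqref{duality}, which is exactly (iii) with this $\t$. Conversely, for (iii)$\Rightarrow$(i) I would fix the witnessing $\t$, produce a single extension $\mu$ carried by $\ov{\Ch_{\H_1} K_1}$ via Lemma~\ref{repre}, and use $\norm{\mu(\{x\})}>P$ together with the dual-norm formula to extract one $u\in S_{E_1}$ with $\abs{\la\mu(\{x\}),u\ra}>P$; applying \eqref{duality} once more converts this into the defining condition of (i).

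The only point demanding care is the asymmetry between the universal quantifier over $\mu$ in the statement of (iii) and the existential use of $\mu$ in the converse direction. This is harmless precisely because \eqref{duality} shows that $\la\mu(\{x\}),u\ra$, and hence $\norm{\mu(\{x\})}$, is independent of which boundary-carried extension $\mu$ is selected, so the two readings agree. I do not expect a genuine obstacle: once \eqref{duality}, the existence of boundary-carried extensions (Lemma~\ref{repre}) and the dual-norm identity are in hand, the three equivalences are formal manipulations of suprema and quantifiers.
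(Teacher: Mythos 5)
Your proposal is correct and follows essentially the same route as the paper: both rest entirely on the chain of equalities \eqref{duality}, use Lemma~\ref{repre} to produce a boundary-carried Hahn--Banach extension for (iii)$\Rightarrow$(i), and extract the witnessing $u\in S_{E_1}$ from the dual-norm identity. The paper's proof is merely terser, leaving implicit the points you spell out (the independence of $\la\mu(\{x\}),u\ra$ from the choice of extension and the supremum manipulations), so there is nothing substantive to add.
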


\begin{proof}
It follows by \eqref{duality} that the assertions (i) and (ii) are equivalent and that (i) implies (iii). 	
On the other hand, if (iii) holds, then pick an arbitrary $\mu \in \pi_1^{-1}(T^\ast( \phi_2(y, \t)))$, a Hahn-Banach extension of $T^\ast( \phi_2(y, \t))$ carried by $\ov{\Ch_{\H_1} K_1}$ (such a $\mu$ exists by Lemma \ref{repre}). Then there exists a point $u \in S_{E_1}$ such that $\abs{\langle \mu(\{x\}), u \rangle}>P$. Then again by \eqref{duality}, 
\[\abs{\langle T^{\ast\ast}(a_{x, u}^{\ast\ast}), \phi_2(y, \t) \rangle}=\abs{\la \mu(\{x\}), u \rangle}>P,\] 
that is, $\rho_2(y)=x$.
\end{proof}

Similarly we have the following characterization.
\begin{lemma}
	\label{ekvi2}
	Let $x \in \Ch_{\H_1} K_1, y \in \Ch_{\H_2} K_2$. Then the following assertions are equivalent:
	\begin{itemize}
		\item[(i)] \[\rho_1(x)=y;\]
		\item[(ii)] \[\sup_{v \in S_{E_2}, \s \in S_{\E_1}} \abs{\la \s, \pi_1^\ast((T^{-1})^{\ast\ast}(b_{y,v}^{\ast\ast}))(t_1x) \ra}>(\min \lbrace \lambda(E_1), \lambda(E_2) \rbrace)^{-1};\]
		\item[(iii)] there exists a point $\s \in S_{\E_1}$ such that whenever 
		\[\mu \in \pi_2^{-1}((T^\ast)^{-1}(\phi_1(x, \s))) \cap \M(K_2, \E_2)\] is a Hahn-Banach extension of $(T^\ast)^{-1}(\phi_1(x, \s))$ which is carried by $\ov{\Ch_{\H_2} K_2}$, then $\norm{\mu(\{y\})}>(\min \lbrace \lambda(E_1), \lambda(E_2) \rbrace)^{-1}$.
	\end{itemize}
\end{lemma}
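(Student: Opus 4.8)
The plan is to mirror the proof of Lemma~\ref{ekvi} essentially verbatim, with the roles of the two subspaces interchanged: the governing identity is now \eqref{duality2} in place of \eqref{duality}, the operator $T^{\ast\ast}$ is replaced by $(T^{-1})^{\ast\ast}=(T^{\ast\ast})^{-1}$, and the threshold $P$ is replaced by $(\min\{\lambda(E_1),\lambda(E_2)\})^{-1}$. For the equivalence (i)$\iff$(ii) I would simply observe that, by the first equality in \eqref{duality2}, the quantity $\langle\s,\pi_1^\ast((T^{-1})^{\ast\ast}(b_{y,v}^{\ast\ast}))(t_1x)\rangle$ coincides with $\langle(T^{\ast\ast})^{-1}(b_{y,v}^{\ast\ast}),\phi_1(x,\s)\rangle$ for every admissible pair $(v,\s)$; since a strict supremum exceeds a constant precisely when some admissible pair does, (ii) is a verbatim restatement of the defining condition $\rho_1(x)=y$.

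For (i)$\Rightarrow$(iii) I would fix a pair $(v,\s)$ witnessing $\rho_1(x)=y$ and take this $\s$ as the point demanded in (iii). For any Hahn--Banach extension $\mu=\psi\ep_y+\nu$ of $(T^\ast)^{-1}(\phi_1(x,\s))$ carried by $\ov{\Ch_{\H_2}K_2}$, equation \eqref{duality2} gives $\langle\mu(\{y\}),v\rangle=\langle(T^{\ast\ast})^{-1}(b_{y,v}^{\ast\ast}),\phi_1(x,\s)\rangle$, whose modulus exceeds $(\min\{\lambda(E_1),\lambda(E_2)\})^{-1}$; as $\norm{v}=1$, this forces $\norm{\mu(\{y\})}\ge|\langle\mu(\{y\}),v\rangle|>(\min\{\lambda(E_1),\lambda(E_2)\})^{-1}$. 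Conversely, for (iii)$\Rightarrow$(i) I would invoke Lemma~\ref{repre} to produce some Hahn--Banach extension $\mu$ of $(T^\ast)^{-1}(\phi_1(x,\s))$ carried by $\ov{\Ch_{\H_2}K_2}$; by hypothesis $\norm{\mu(\{y\})}$ exceeds the bound, so some $v\in S_{E_2}$ satisfies $|\langle\mu(\{y\}),v\rangle|>(\min\{\lambda(E_1),\lambda(E_2)\})^{-1}$, and \eqref{duality2} converts this back into $|\langle(T^{\ast\ast})^{-1}(b_{y,v}^{\ast\ast}),\phi_1(x,\s)\rangle|>(\min\{\lambda(E_1),\lambda(E_2)\})^{-1}$, i.e.\ $\rho_1(x)=y$.

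The one point I expect to require a moment's care is the well-definedness implicit in (iii): a priori $\norm{\mu(\{y\})}$ might depend on the chosen extension. But \eqref{duality2} shows that for each fixed $v$ the scalar $\langle\mu(\{y\}),v\rangle$ equals the extension-independent number $\langle(T^{\ast\ast})^{-1}(b_{y,v}^{\ast\ast}),\phi_1(x,\s)\rangle$; taking the supremum over $v\in S_{E_2}$ shows that $\norm{\mu(\{y\})}$ is itself independent of the extension. Hence the universal quantifier ``whenever $\mu\ldots$'' in (iii) collapses to the existential one, which is exactly what makes the two implications above line up cleanly. I expect this observation, rather than any estimate, to be the only genuinely delicate step.
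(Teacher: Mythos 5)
Your proposal is correct and follows essentially the same route as the paper: the paper proves Lemma~\ref{ekvi} from \eqref{duality} and then states Lemma~\ref{ekvi2} with the remark ``similarly,'' meaning exactly the mirrored argument via \eqref{duality2}, the identification $(T^{-1})^{\ast\ast}=(T^{\ast\ast})^{-1}$, and Lemma~\ref{repre} for the existence of an extension in the direction (iii)$\Rightarrow$(i), all of which you reproduce faithfully. Your closing observation that \eqref{duality2} makes $\norm{\mu(\{y\})}$ independent of the chosen Hahn--Banach extension, so the universal and existential readings of (iii) coincide, is precisely the point implicitly used in the paper's proof of Lemma~\ref{ekvi} as well.
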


\emph{Claim 2. $\rho_1$ and $\rho_2$ are mappings.}

We show that $\rho_2(y)$ is at most single-valued for each $y\in \Ch_{\H_2} K_2$. Suppose that there are distinct $x_1, x_2 \in \Ch_{\H_1} K_1$ such that $\rho_2(y)=x_i$ for $i=1, 2$. By Lemma~\ref{ekvi}(ii) this means that there exist points $u_i \in S_{E_1}, i=1, 2$, such that \[\norm{\pi_2^\ast(T^{\ast\ast}(a_{x_i,u_i}^{\ast\ast}))(t_2y)}_{E_2}>P, \quad i=1, 2.\]

Then by Lemma \ref{lambda} there exist $\alpha_1, \alpha_2 \in \ef$ with $\vert \alpha_i \vert \leq 1$ for $i=1, 2$, such that
\[ \norm{\alpha_1\pi_2^\ast(T^{\ast\ast}(a_{x_1,u_1}^{\ast\ast}))(t_2y)+\alpha_2\pi_2^\ast(T^{\ast\ast}(a_{x_2,u_2}^{\ast\ast}))(t_2y)}_{E_2} \geq P \lambda(E_2).\]
Thus 
\begin{equation}
\nonumber
\begin{aligned}
&\norm{\pi_2^\ast(T^{\ast\ast}(\alpha_1a_{x_1,u_1}^{\ast\ast}+\alpha_2a_{x_2,u_2}^{\ast\ast}))}_{\sup} 
\geq \\& \geq
\norm{\alpha_1\pi_2^\ast(T^{\ast\ast}(a_{x_1,u_1}^{\ast\ast}))(t_2y)+\alpha_2\pi_2^\ast(T^{\ast\ast}(a_{x_2,u_2}^{\ast\ast}))(t_2y)}_{E_2} \geq P \lambda(E_2) > \lambda(E_2).
\end{aligned}
\end{equation}

On the other hand, the function $\alpha_1 a_{x_1, u_1}^{\ast\ast}+\alpha_2 a_{x_2, u_2}^{\ast\ast} \in \H_1^{\ast\ast}$ satisfies 
\[\norm{\alpha_1 a_{x_1, u_1}^{\ast\ast}+\alpha_2 a_{x_2, u_2}^{\ast\ast}} \leq 1\]
 by Lemma \ref{faktor}. Thus we have obtained a contradiction with $\norm{\pi_2^\ast(T^{\ast\ast})}=\norm{T^{\ast\ast}}<\min \lbrace \lambda(E_1), \lambda(E_2) \rbrace \leq \lambda(E_2)$,  and hence $\rho_2$ is a mapping.
Analogously we show that $\rho_1(x)$ is at most single-valued for each $x\in \Ch_{\H_1} K_1$.

Next we use the maximum principle to check that the mappings $\rho_1$ and $\rho_2$ are surjective.

\begin{lemma}
\label{principle}
The following assertions hold.
\begin{itemize}
\item[(i)] 
If $x \in \Ch_{\H_1} K_1$ and $u \in S_{E_1}$, then 
\begin{equation}
\nonumber
\begin{aligned}
&\norm{ T^{**}(a_{x, u}^{**})}=\sup_{y \in \Ch_{\H_2} K_2, \t \in S_{\E_2}} \abs{\langle T^{**}(a_{x, u}^{**}), \phi_2(y, \t) \rangle}
=\\&=
\sup_{y \in \Ch_{\H_2} K_2} \norm{\pi_2^*(T^{**}(a_{x, u}^{**}))(t_2y)}.
\end{aligned}
\end{equation}
\item[(ii)] 
If $y \in \Ch_{\H_2} K_2$ and $v \in S_{E_2}$, then
\begin{equation}
\nonumber
\begin{aligned}
&\norm{ (T^{-1})^{**}(b_{y, v}^{**})}=\sup_{x \in \Ch_{\H_1} K_1, \s \in S_{\E_1}} \abs{\langle (T^{-1})^{**}(b_{y, v}^{**}), \phi_1(x, \s) \rangle}
=\\&=
\sup_{x \in \Ch_{\H_1} K_1} \norm{\pi_1^*((T^{-1})^{**}(b_{y, v}^{**}))(t_1x)}.
\end{aligned}
\end{equation}
\end{itemize}
\end{lemma}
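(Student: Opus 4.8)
The two assertions are symmetric, so I will describe the argument for (i); part (ii) follows verbatim after interchanging the roles of the two spaces and replacing \eqref{duality} by \eqref{duality2}. The guiding idea is to regard $b^{**}:=T^{**}(a_{x,u}^{**})\in\H_2^{**}$ as a bounded, hence affine, function on the weak$^*$-compact convex set $B_{\H_2^*}$ and to apply the maximum principle, Lemma~\ref{minim}: the norm of $b^{**}$ is its supremum over $B_{\H_2^*}$, and the maximum principle will let me replace this by the supremum over $\ext B_{\H_2^*}$, a set we control through Lemma~\ref{extreme}.

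The only step I expect to require genuine care is verifying that $b^{**}$ is of the first Borel class on each ball $rB_{\H_2^*}$, which is exactly what Lemma~\ref{minim} needs. Here I would write $b^{**}=a_{x,u}^{**}\circ T^*$ and use that the adjoint $T^*\colon B_{\H_2^*}\to\norm{T}B_{\H_1^*}$ is weak$^*$-weak$^*$ continuous, while $a_{x,u}^{**}$ is of the first Borel class on $\norm{T}B_{\H_1^*}$ as noted above (Lemmas~\ref{first-borel} and~\ref{pi}(b)). Since the preimage under a continuous map of a countable union of differences of closed sets is again of this form, composition with the continuous map $T^*$ preserves the first Borel class, so $b^{**}$ is of the first Borel class on $B_{\H_2^*}$ and on every ball.

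Granting this, Lemma~\ref{minim} applied to the affine first-Borel-class function $b^{**}$ on $X=B_{\H_2^*}$ yields $\norm{b^{**}}=\sup_{B_{\H_2^*}}\abs{b^{**}}=\sup_{\ext B_{\H_2^*}}\abs{b^{**}}$. I would then run the sandwich: by Lemma~\ref{extreme} the extreme points of $B_{\H_2^*}$ lie in $\phi_2(\Ch_{\H_2}K_2\times\ext B_{\E_2})$, and since $\ext B_{\E_2}\subset S_{\E_2}$ while each $\phi_2(y,\t)$ with $\t\in S_{\E_2}$ lies in $B_{\H_2^*}$, one gets
\[
\norm{b^{**}}=\sup_{\ext B_{\H_2^*}}\abs{b^{**}}\le\sup_{y\in\Ch_{\H_2}K_2,\,\t\in S_{\E_2}}\abs{\la b^{**},\phi_2(y,\t)\ra}\le\norm{b^{**}},
\]
forcing equality throughout. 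This is precisely the first displayed identity of (i).

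For the second identity I would fix $y$ and invoke \eqref{duality}, which identifies $\la\t,\pi_2^*(T^{**}(a_{x,u}^{**}))(t_2y)\ra$ with $\la T^{**}(a_{x,u}^{**}),\phi_2(y,\t)\ra$ for every $\t\in S_{\E_2}$. Since $E_2$ is reflexive, $\pi_2^*(b^{**})(t_2y)$ is an element of $E_2=E_2^{**}$, whose norm is computed as $\sup_{\t\in S_{\E_2}}\abs{\la\t,\cdot\ra}$; taking the supremum over $\t\in S_{\E_2}$ thus converts the left-hand sides into $\norm{\pi_2^*(T^{**}(a_{x,u}^{**}))(t_2y)}$, and taking the supremum over $y$ completes the proof of (i). Part (ii) is obtained identically, with \eqref{duality2} in place of \eqref{duality} and the roles of $\H_1,\H_2$ (and of $T$, $T^{-1}$) exchanged.
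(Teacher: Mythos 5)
Your proposal is correct and takes essentially the same route as the paper: both verify that $T^{**}(a_{x,u}^{**})=a_{x,u}^{**}\circ T^*$ is of the first Borel class on $B_{\H_2^*}$ via the weak$^*$-weak$^*$ continuity of $T^*$, then combine the maximum principle (Lemma~\ref{minim}) with Lemma~\ref{extreme} in a sandwich argument, and finish with \eqref{duality} and the fact that the norm of $\pi_2^*(T^{**}(a_{x,u}^{**}))(t_2y)\in E_2$ is the supremum of $\abs{\la \t,\cdot\ra}$ over $\t\in S_{\E_2}$. The only cosmetic difference is your use of the ball $\norm{T}B_{\H_1^*}$ where the paper uses $2B_{\H_1^*}$; both are valid since $\norm{T}\le 2$.
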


\begin{proof}
We prove (i), the proof of (ii) is similar. First we show that the element $T^{**}a_{x, u}^{**}\in \H_2^{**}$ is of the first Borel class on $B_{\H_2^*}$.

Indeed, we know that $a_{x, u}^{**}$ is of the first Borel class on any ball in $\H_1^*$, in particular on $2B_{\H_1^*}$. Since $T^*$ is a weak$^*$-weak$^*$ homeomorphism, $T^*(B_{\H_2^*})\subset 2B_{\H_1^*}$ and $T^{**}(a_{x, u}^{**})=a_{x, u}^{**}\circ T^*$, it follows that $T^{**}(a_{x, u}^{**})$ is of the first Borel class on $B_{\H_2^*}$ as well. Thus by Lemma~\ref{minim} and~\ref{extreme} we have
\begin{equation}
\nonumber
\begin{aligned}
&\Vert T^{**}(a_{x, u}^{**}) \Vert =\sup_{h^* \in B_{\H_2^*}} \abs{\langle T^{**}(a_{x, u}^{**}), h^* \rangle}
=\\&=
\sup_{h^* \in \ext B_{\H_2^*}} \abs{\langle T^{**}(a_{x, u}^{**}), h^* \rangle} = \sup_{y \in \Ch_{\H_2} K_2, \t \in S_{\E_2}} \abs{\langle T^{**}(a_{x, u}^{**}), \phi_2(y, \t) \rangle}
=^{\eqref{duality}}\\&=
\sup_{y \in \Ch_{\H_2} K_2, \t \in S_{\E_2}} \abs{\la \t,  \pi_2^\ast(T^{\ast\ast}(a_{x, u}^{\ast\ast}))(t_2y) \rangle}=\sup_{y \in \Ch_{\H_2} K_2} \norm{\pi_2^*(T^{**}(a_{x, u}^{**}))(t_2y)}.
\end{aligned}
\end{equation}	
\end{proof}

Let $L_1$ and $L_2$ denote the domain of $\rho_1$ and $\rho_2$, respectively.

\emph{Claim 3.: The mappings $\rho_1\colon L_1\to \Ch_{\H_2} K_2$ and $\rho_2\colon L_2\to \Ch_{\H_1} K_1$ are surjective.}
Let $x\in \Ch_{\H_1} K_1$ be given and choose arbitrary $u \in S_{E_1}$. By Lemma \ref{faktor} we know that $\norm{a_{x, u}^{**}}=1$. Thus by Lemma \ref{principle} we have

\[
\begin{aligned}
&P < \Vert T^{**}(a_{x, u}^{**}) \Vert = \sup_{y \in \Ch_{\H_2} K_2, \t \in S_{\E_2}} \abs{\langle T^{**}(a_{x, u}^{**}), \phi(y, \t) \rangle}.
\end{aligned}
\]
Thus there exist $y \in \Ch_{\H_2} K_2$ and $\t \in S_{\E_2}$ such that $P<\abs{\langle T^{**}(a_{x, u}^{**}), \phi(y, \t) \rangle}$, that is, $\rho_2(y)=x$.
Analogously we check that $\rho_1$ is surjective.

\emph{Claim 4.: We have $L_1=\Ch_{\H_1} K_1$ and $L_2=\Ch_{\H_2} K_2$ and $\rho_2(\rho_1(x))=x$, $x\in\Ch_{\H_1} K_1$, and $\rho_1(\rho_2(y))=y$, $y\in\Ch_{\H_2} K_2$.}

Suppose that $y \in \Ch_{\H_2} K_2$, $\rho_2(y)=x$, but $x \notin L_1$ or $\rho_1(x) \neq y$. In both cases we obtain by Lemma \ref{ekvi2}(ii) that for all $v \in S_{E_2}$, $\norm{\pi_1^*((T^{**})^{-1}(b_{y, v}^{**}))(t_1x)}\leq (\min \lbrace \lambda(E_1), \lambda(E_2) \rbrace)^{-1}$. For $v \in S_{E_2}$ we denote 
\[ Q_v=\sup_{\tilde{x} \in \Ch_{\H_1} K_1} \norm{\pi_1^*((T^{**})^{-1}(b_{y, v}^{**})(t_1\tilde{x})}=^{\text{Lemma } \ref{principle}\text{(ii)}}\norm{(T^{-1})^{**}(b_{y,v}^{**})}\]
and 
\[Q=\sup_{v \in S_{E_2}} Q_v.\]

We know that $\rho_1$ is surjective. This by Lemma \ref{ekvi2}(ii) means that \[Q >(\min \lbrace \lambda(E_1), \lambda(E_2) \rbrace)^{-1}.\] Let $\ep>0$ satisfy 

\[\ep<\frac{2P-\min \lbrace \lambda(E_1), \lambda(E_2) \rbrace}{\min \lbrace \lambda(E_1), \lambda(E_2) \rbrace P}\quad\text{and}\quad Q -\ep> (\min \lbrace \lambda(E_1), \lambda(E_2) \rbrace)^{-1}.\]

By the definition of $Q$, let $v \in S_{E_2}$ and $\tilde{x} \in \Ch_{\H_1} K_1$ be such that the vector $u_1=\pi_1^*((T^{**})^{-1}(b_{y, v}^{**})(t_1\tilde{x})$ satisfies $\norm{u_1}>Q -\ep$. Since $Q -\ep> (\min \lbrace \lambda(E_1), \lambda(E_2) \rbrace)^{-1}$, this by Lemma \ref{ekvi2}(ii) means that $\rho_1(\tilde{x})=y$. Thus $\tilde{x} \neq x$.
We denote $u_2=\frac{u_1}{\norm{u_1}} \in S_{E_1}$.

Now we consider the element $T^{**}(a_{\tilde{x}, u_2}^{**})$. Since $\norm{a_{\tilde{x}, u_2}^{**}}=1$ by Lemma \ref{faktor}, we know that \[\norm{T^{**}(a_{\tilde{x}, u_2}^{**})} > P.\] This by Lemma \ref{principle}(i) means that there exists $\tilde{y} \in \Ch_{\H_2} K_2$ such that 
\[\norm{\pi_2^*(T^{**}(a_{\tilde{x}, u_2}^{**})(t_2\tilde{y})}>P.\] 
Thus there exists $\t \in S_{\E_2}$ such that
\[\abs{\la \t, \pi_2^*(T^{**}(a_{\tilde{x}, u_2}^{**})(t_2\tilde{y}) \ra}>P.\] 
Hence $\rho_2(\tilde{y})=\tilde{x}$ by Lemma \ref{ekvi}(ii). Thus $y \neq \tilde{y}$, since $\rho_2(\tilde{y})=\tilde{x}\neq x =\rho_2(y)$. 
Now, if we pick $\mu \in \pi_1^{-1}(T^*(\phi_2(\tilde{y}, \t)))$, a Hahn-Banach extension of $T^*(\phi_2(\tilde{y}, \t))$ carried by $\ov{\Ch_{\H_1} K_1}$, and write it in the form $\mu=\psi \ep_{\tilde{x}}+\nu$, where $\psi \in \E_1$ and $\nu \in \M(\ov{\Ch_{\H_1} K_1}, \E)$ with $\nu(\{\tilde{x}\})=0$, then by \eqref{duality},
\[\abs{\langle \psi, u_2 \rangle}=\abs{\la \t, \pi_2^*(T^{**}(a_{\tilde{x}, u_2}^{**})(t_2\tilde{y}) \ra}>P.\]
Thus $\norm{\psi}>P$ and 
\[\abs{\langle \psi, u_1 \rangle}=\norm{u_1} \abs{\langle \psi, u_2 \rangle}>(Q-\ep) P.\]
Now we have
\begin{equation}
\nonumber
\begin{aligned}
0&=\la \wh{\chi_{y}} \otimes v, \t\ep_{\tilde{y}} \ra_{\C(K_2, E_2)^{**}, \M(K_2, \E_2)}=\la b_{y, v}^{**}, \pi_2(\t \ep_{\tilde{y}}) \ra_{\H_2^{**}, \H_2^*}
=^{\eqref{projekce}}\\&=
 \langle b_{y, v}^{**}, \phi_2(\tilde{y}, \t) \rangle_{\H_2^{**}, \H_2^*}=
\langle (T^{**})^{-1}b_{y, v}^{**}, T^*\phi_2(\tilde{y}, \t) \rangle_{\H_1^{**}, \H_1^*}
=\\&=
\langle (T^{**})^{-1}b_{y, v}^{**}, \pi_1(\mu) \rangle_{\H_1^{**}, \H_1^*}=
\langle \pi_1^*((T^{**})^{-1}b_{y, v}^{**}), \mu \rangle_{\C(K_1, E_1)^{**}, \M(K_1, \E_1)}
=\\&=
\langle \pi_1^*((T^{**})^{-1}b_{y, v}^{**}), \psi \ep_{\tilde{x}}+\nu \rangle_{\C(K_1, E_1)^{**}, \M(K_1, \E_1)}
=^{\eqref{duality2}}\\&=
\langle \psi,\pi_1^*((T^{**})^{-1}b_{y, v}^{**})(t_1\tilde{x}) \rangle_{\E_1, E_1}+\langle \pi_1^*((T^{**})^{-1}b_{y, v}^{**}),\nu \rangle_{\C(K_1, E_1)^{**}, \M(K_1, \E_1)}
=\\&=
\langle \psi, u_1 \rangle_{\E_1, E_1}+ \langle \pi_1^*((T^{**})^{-1}b_{y, v}^{**}),\nu \rangle_{\C(K_1, E_1)^{**}, \M(K_1, \E_1)}.
\end{aligned}
\end{equation}	
On the other hand, we know that $\abs{\langle \psi, u_1 \rangle} >(Q-\ep) P$ and $\norm{\nu} \leq \norm{\mu}-\norm{\psi}<\min \lbrace \lambda(E_1), \lambda(E_2) \rbrace-P$, and thus

\begin{equation}
\nonumber
\begin{aligned}
\abs{\langle \pi_1^*(T^{**})^{-1}b_{y, v}^{**},\nu \rangle} &\leq \norm{\pi_1^*(T^{**})^{-1}b_{y, v}^{**}}\norm{\nu}
\leq \norm{(T^{**})^{-1}b_{y, v}^{**}}(\norm{\mu}-\norm{\psi})
<\\&<
Q_{v}(\min \lbrace \lambda(E_1), \lambda(E_2) \rbrace-P) \leq Q(\min \lbrace \lambda(E_1), \lambda(E_2) \rbrace-P).
\end{aligned}
\end{equation}

Thus it is necessary that $(Q -\ep)P \leq Q(\min \lbrace \lambda(E_1), \lambda(E_2) \rbrace-P)$, that is, 
\[\ep \geq \frac{Q(2P-\min \lbrace \lambda(E_1), \lambda(E_2) \rbrace)}{P} \geq \frac{2P-\min \lbrace \lambda(E_1), \lambda(E_2) \rbrace}{\min \lbrace \lambda(E_1), \lambda(E_2) \rbrace P}.\]
This contradicts the choice of $\ep$ and shows that $x \in L_1$ and $\rho_1(x)=y$.

Now, let $x\in\Ch_{\H_1} K_1$ be given. Then there exists $y\in L_2$ such that $\rho_2(y)=x$. Then $y=\rho_1(\rho_2(y))=\rho_1(x)$, which means that $x\in L_1$.

Let $y\in\Ch_{\H_2} K_2$ be given. Then we can find $x\in L_1=\Ch_{\H_1} K_1$ with $\rho_1(x)=y$ and further we can select $\wh{y}\in L_2$ such that $\rho_2(\wh{y})=x$. Then
\[
y=\rho_1(x)=\rho_1(\rho_2(\wh{y}))=\wh{y}\in L_2.
\]
Hence $L_2=\Ch_{\H_2} K_2$.

Finally, if $x\in \Ch_{\H_1} K_1$, we find $y\in \Ch_{\H_2} K_2$ with $\rho_2(y)=x$ and obtain
\[
\rho_2(\rho_1(x))=\rho_2(\rho_1(\rho_2(y)))=\rho_2(y)=x.
\]

Till now we have proved that $\rho_1\colon \Ch_{\H_1} K_1\to \Ch_{\H_2} K_2$ is a bijection with $\rho_2$ being its inverse.
Now we check that $\rho_1$ is a homeomorphism. To this end, note that the definition of the mappings $\rho_1$ and $\rho_2$ may be now be rewritten in the following way:
\begin{equation}
\label{prepis}
\begin{aligned}
\rho_1(x)=&\{y\in \Ch_{\H_2} K_2, \forall v \in S_{E_2} \exists \s \in S_{\E_1} \colon\\&
 \abs{\langle (T^{\ast\ast})^{-1}(b_{y, v}^{\ast\ast}), \phi_1(x, \s) \rangle} >\frac{1}{\min \lbrace \lambda(E_1), \lambda(E_2) \rbrace} \}, \quad x \in \Ch_{\H_1} K_1,\\
\rho_2(y)&=\{x\in\Ch_{\H_1} K_1, \forall u \in S_{E_1} \exists \t \in S_{\E_2}\colon\\&
\abs{\langle T^{\ast\ast}(a_{x, u}^{\ast\ast}), \phi_2(y, \t) \rangle} >P\}, \quad y \in \Ch_{\H_2} K_2.\\
\end{aligned}
\end{equation}
We show that the formula above holds for $\rho_2$, the proof for $\rho_1$ is similar. Suppose that $y \in \Ch_{\H_2} K_2$ and $\rho_2(y)=x$. If $u \in S_{E_1}$ is arbitrary, then by Lemma \ref{principle} we obtain that
\[
\begin{aligned}
&P < \Vert T^{**}(a_{x, u}^{**}) \Vert = \sup_{\tilde{y} \in \Ch_{\H_2} K_2, \t \in S_{\E_2}} \abs{\langle T^{**}(a_{x, u}^{**}), \phi_2(\tilde{y}, \t) \rangle}.
\end{aligned}
\]
Thus there exist $\tilde{y} \in \Ch_{\H_2} K_2$ and $\t \in S_{\E_2}$ such that $\abs{\langle T^{**}(a_{x, u}^{**}), \phi_2(\tilde{y}, \t) \rangle}>P$. This means that $\rho_2(\tilde{y})=x$. But since we know that $\rho_2$ is a bijection, this means that $y=\tilde{y}$. 

\emph{Claim 5.: The mapping $\rho_2$ is continuous.}

Assuming the contrary, there exists a net $\lbrace y_\beta: \beta \in B \rbrace \subset \Ch_{\H_2} K_2$ such that $y_\beta \rightarrow y_0 \in \Ch_{\H_2} K_2$ but $x_\beta=\rho_2(y_\beta) \nrightarrow \rho_2(y_0)=x_0$. Then there exists a closed neighbourhood $V$ of $x_0$ such that for each $\beta_0 \in B$ there exists $\beta \geq \beta_0$ such that $x_\beta \notin V$. 

Fix a point $u \in S_{E_1}$. Since $\rho_2(y_0)=x_0$, by \eqref{prepis} and \eqref{duality} there exists $\t_0 \in S_{\E_2}$ such that whenever $\mu_0 \in \pi_1^{-1}(T^*(\phi_2(y_0, \t_0)))$ is a Hahn-Banach extension of $T^*(\phi_2(y_0, \t_0))$ carried by $\ov{\Ch_{\H_1} K_1}$, then $\abs{\la \mu_0(\lbrace x_0 \rbrace), u \ra}>P$. We pick such a  $\mu_0$ and write it in the form $\mu_0=\psi_0 \ep_{x_0}+\nu_0$, where $\psi_0 \in \E_1$ and $\nu_0 \in \M(\ov{\Ch_{\H_1} K_1}, \E_1)$ with $\nu_0(\lbrace x_0 \rbrace)=0$. Then $\abs{\langle \psi_0, u \rangle}=\abs{\la \mu_0(\lbrace x_0 \rbrace), u \ra}>P$.

Now, choose $\ep \in (0, \frac{1}{2})$ such that $ \frac{1+3\ep}{1-\ep}<P$. Then, since \[\norm{\mu_0} \leq \min \lbrace \lambda(E_1), \lambda(E_2) \rbrace \leq 2,\] we have
\[\frac{1+\ep (\norm{\mu_0}+1)}{1-\ep}\leq \frac {1+3\ep}{1-\ep}<P,\] 
and we may choose a closed neighbourhood $V_1$ of $x_0$ such that $V_1 \subset V$ and 
\[\vert \langle \nu_0, u \rangle \vert(V_1)<P(1-\ep)-(1+\ep (\Vert \mu_0 \Vert+1)).\]
Since $x_0$ is a weak peak point, we may find a function $h_0 \in B_{\A_1}$, where $\A_1$ is the canonical scalar function space of $\H_1$, such that $h_0 \otimes u \in B_{\H_1}$, $h_0(x_0)>1-\ep$ and $\abs{h_0}< \ep$ on $\Ch_{\H_1} K_1 \setminus V_1$. Then, since $h_0$ is continuous, $\abs{h_0} \leq  \ep$ on the set 
\[ \ov{\Ch_{\H_1} K_1 \setminus V_1} \supseteq \ov{\Ch_{\H_1} K_1} \setminus V_1.\]
Then we have 
\begin{equation}
\nonumber
\begin{aligned}
&\abs{\langle \t_0, T(h_0 \otimes u)(y_0) \rangle}=\abs{\langle \t_0\ep_{y_0}, T(h_0 \otimes u) \rangle}=^{\eqref{projekce}}\abs{\langle \phi_2(y_0, \t_0), T(h_0 \otimes u)\rangle}
=\\&=
\abs{\langle T^*\phi_2(y_0, \t_0), h_0 \otimes u \rangle}=
\abs{\langle \mu_0, h_0 \otimes u \rangle}=\abs{\langle \psi_0\ep_{x_0} +\nu_0, h_0 \otimes u \rangle}
=\\&=
\abs{h_0(x_0)\langle \psi_0, u \rangle+ \int_{\ov{\Ch_{\H_1} K_1} \cap V_1} h_0~d\langle \nu_0, u \rangle+ \int_{\ov{\Ch_{\H_1} K_1} \setminus V_1} h_0~ d\langle \nu_0, u \rangle}
\geq \\& \geq
h_0(x_0)\abs{\la \psi_0, u \ra}-\int_{\ov{\Ch_{\H_1} K_1} \cap V_1} \abs{h_0} ~d\abs{\langle \nu_0, u \rangle}-\int_{\ov{\Ch_{\H_1} K_1} \setminus V_1} \abs{h_0}~ d\abs{\langle \nu_0, u \rangle}
> \\& >
(1-\ep)P-(P(1-\ep)-(1+\ep \Vert \mu_0 \Vert+\ep))-\ep\norm{\mu_0} = 1 + \ep.
\end{aligned}
\end{equation}
Thus $\norm{T(h_0 \otimes u)(y_0)}>1+\ep$. Since $y_\beta \rightarrow y_0$ and $T(h_0 \otimes u)$ is continuous, there exists a $\beta_0 \in B$ such that for all $\beta \geq \beta_0$ we have $\norm{T(h_0 \otimes u)(y_\beta)}>1+\ep$. Thus we can fix a $\beta \in B$ satisfying that $\norm{T(h_0 \otimes u)(y_\beta)}>1+\ep$ and $x_\beta=\rho_2(y_\beta) \notin V$. Then again by \eqref{prepis} and \eqref{duality} there exists $\t_\beta \in S_{\E_2}$ such that whenever $\mu_\beta \in \pi_1^{-1}(T^*(\phi_2(y_\beta, \t_\beta)))$ is a Hahn-Banach extension of $T^*(\phi_2(y_\beta, \t_\beta))$ carried by $\ov{\Ch_{\H_1} K_1}$, then $\abs{\langle \mu_\beta(\lbrace x_\beta \rbrace), u \rangle}> P$. We pick such a  $\mu_\beta$ and write it in the form $\mu_\beta=\psi_\beta \ep_{x_\beta}+\nu_\beta$, where $\psi_\beta \in \E_1$ and $\nu_\beta \in \M(\ov{\Ch_{\H_1} K_1}, \E_1)$ with $\nu_\beta(\lbrace x_\beta \rbrace)=0$. Then $\abs{\langle \psi_\beta, u \rangle}=\abs{\langle \mu_\beta(\lbrace x_\beta \rbrace), u \rangle}>P$. 
Next choose a closed neighbourhood $V_2$ of $x_\beta$ disjoint from $V$ such that
\[\vert \langle \nu_\beta, u \rangle \vert(V_2)<P(1-\ep)-(1+\ep (\Vert \mu_{\beta} \Vert+1)).\]
Since $x_\beta$ is a weak peak point, we may find a function $h_\beta \in B_{\A_1}$ such that $h_\beta \otimes u \in B_{\H_1}$, $h_\beta(x_\beta)>1-\ep$ and $\abs{h_\beta}< \ep$ on $\Ch_{\H_1} K_1 \setminus V_2$. Then as above we obtain that $\norm{T(h_\beta \otimes u)(y_\beta)}>1+\ep$. Now, by Lemma \ref{lambda} there exist $\alpha_1, \alpha_2 \in \ef$ such that $\abs{\alpha_1} \leq 1, \abs{\alpha_2} \leq 1$ and 
\begin{equation}
\nonumber
\begin{aligned}
&\norm{T(\alpha_1 (h_0 \otimes u)+\alpha_2(h_\beta \otimes u))}_{\sup} \geq \norm{\alpha_1 T(h_0 \otimes u)(y_\beta)+\alpha_2 T(h_\beta \otimes u)(y_\beta)}
>\\&>
(1+\ep)\lambda(E_2). 
\end{aligned}
\end{equation}
On the other hand, by the Phelps maximum principle (see \cite[Theorem 2.3.8]{fleming-jamison-1}) we have 
\begin{equation}
\nonumber
\begin{aligned}
&\norm{\alpha_1(h_0 \otimes u)+\alpha_2(h_\beta \otimes u)}_{\sup}\leq \norm{u}\norm{\alpha_1h_0+\alpha_2h_\beta}_{\sup}
=\\&=
\sup_{x \in K_1} \vert \alpha_1h_0(x)+\alpha_2 h_\beta(x) \vert=
\sup_{x \in \Ch_{\H_1} K_1} \vert \alpha_1h_0(x)+\alpha_2 h_\beta(x) \vert
 \leq 1+\ep.
\end{aligned}
\end{equation}
Thus, since $\Vert T \Vert \leq \min \lbrace \lambda(E_1), \lambda(E_2) \rbrace$ we obtain that 
\[\Vert T(\alpha_1(h_0 \otimes u)+\alpha_2(h_\beta \otimes u)) \Vert \leq \min \lbrace \lambda(E_1), \lambda(E_2) \rbrace (1+\ep) \leq \lambda(E_2)(1+\ep).\] This contradiction proves that $\rho_2$ is continuous. 

Analogously we would verify that $\rho_1$ is continuous.

This finishes the proof for the compact case. Now we assume that $K_1, K_2$ are locally compact and consider their one-point compactifications $J_i=K_i\cup\{\alpha_i\}$, where, for $i=1,2$, $\alpha_i$ is the point representing infinity. The spaces $\H_i$ are then closed subspaces of $\C(J_i,\ef)$ satisfying $h(\alpha_i)=0$, $h\in \H_i$. By Lemma~\ref{comp}, the assumption on weak peak points for $\Ch_{\H_i} J_i$ is satisfied. Thus the compact case implies the existence of a homeomorphism between $\Ch_{\H_1} J_1$ and $\Ch_{\H_2} J_2$. Since $\Ch_{\H_i} K_i$ is homeomorphic to $\Ch_{\H_i} J_i$ by Lemma \ref{comp}, the theorem follows.

\section{Cardinality of Choquet boundaries}
\label{card}

Before embarking on the proof of Theorem \ref{weak Banach-Stone} we prove several lemmas concerning the case when the Choquet boundaries are finite, inspired by \cite[Lemma 2.2]{candidogalego}.

\begin{lemma}
\label{l_inf}
Let $\H$ be a closed subspace of $\C(K, E)$ such that $\Ch_{\H} K$ is finite and each point of $\Ch_{\H} K$ is a weak peak point. Then $\H$ is isometrically isomorphic to $\ell^{\infty}(\Ch_{\H} K, E)$.
\end{lemma}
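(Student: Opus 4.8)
The plan is to show that the evaluation map
\[
\Phi\colon \H\to \ell^\infty(\Ch_{\H} K, E),\qquad \Phi(h)=\bigl(h(x)\bigr)_{x\in\Ch_{\H} K},
\]
is an isometric isomorphism. Since $\Ch_{\H} K=\{x_1,\dots,x_n\}$ is finite, the target space is just $E^n$ equipped with the maximum norm, and $\Phi$ is a well-defined bounded linear operator.

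First I would check that $\Phi$ is an isometry. As $\Ch_{\H} K$ is finite it is closed, so $\ov{\Ch_{\H} K}=\Ch_{\H} K$. Arguing exactly as in the proof of Lemma~\ref{repre}, for every $h\in\H$ and $\e\in S_{\E}$ one has $\e(h)\in\A$, and hence by \cite[Theorem~2.3.8]{fleming-jamison-1}
\[
\norm{h}_{\sup}=\sup_{\e\in S_{\E},\,x\in K}\abs{\e(h)(x)}=\sup_{\e\in S_{\E},\,x\in \ov{\Ch_{\H} K}}\abs{\e(h)(x)}=\sup_{x\in\ov{\Ch_{\H} K}}\norm{h(x)}=\max_{1\le i\le n}\norm{h(x_i)},
\]
where the last equality uses $\ov{\Ch_{\H} K}=\{x_1,\dots,x_n\}$. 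This is precisely $\norm{\Phi(h)}=\norm{h}$, so $\Phi$ is an isometry; in particular it is injective and its range $\Phi(\H)$ is a \emph{closed} subspace of $E^n$.

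It then remains to prove that $\Phi$ is onto, and since $\Phi(\H)$ is closed it suffices to show that $\Phi(\H)$ is dense in $E^n$. This is where the weak peak point hypothesis enters. Fix a target $(e_1,\dots,e_n)\in E^n$ and $\delta>0$, and choose pairwise disjoint open neighbourhoods $U_i$ of the points $x_i$. For each $i$, applying the definition of a weak peak point at $x_i$ with the vector $e_i$, the neighbourhood $U_i$, and a small parameter $\eta>0$, I obtain $g_i\in B_{\A}$ with $g_i(x_i)>1-\eta$, $\abs{g_i}\le\eta$ on $\Ch_{\H} K\setminus U_i$, and $g_i\otimes e_i\in\H$. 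Setting $h=\sum_{i=1}^n g_i\otimes e_i\in\H$ and using that the $U_i$ are disjoint (so $x_j\in\Ch_{\H} K\setminus U_i$ for every $i\ne j$), a direct estimate gives
\[
\norm{h(x_j)-e_j}\le\abs{g_j(x_j)-1}\,\norm{e_j}+\sum_{i\ne j}\abs{g_i(x_j)}\,\norm{e_i}\le\eta\sum_{i=1}^n\norm{e_i}
\]
for every $j$, whence $\norm{\Phi(h)-(e_1,\dots,e_n)}\le\eta\sum_{i=1}^n\norm{e_i}$. Choosing $\eta$ small enough makes the right-hand side smaller than $\delta$, which proves density and therefore surjectivity.

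The isometry is the routine part, being immediate from the boundary characterisation of the norm once one observes that a finite Choquet boundary is closed. The main point is the surjectivity: the crucial feature of the weak peak point definition exploited here is that the scalar peak function $g_i$ can be chosen so that the genuinely vector-valued function $g_i\otimes e_i$ lies in $\H$, which is exactly what permits the simultaneous interpolation of prescribed values $e_i\in E$ at the finitely many boundary points.
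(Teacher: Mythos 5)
Your proof is correct. The isometry step is identical to the paper's (both invoke the maximum principle \cite[Theorem~2.3.8]{fleming-jamison-1} applied to the functions $\e(h)\in\A$, $\e\in S_{\E}$), but your surjectivity argument takes a genuinely different route. The paper first observes that, since $r$ is an isometry, $\A$ embeds isometrically into $\ell^\infty(\Ch_{\H}K,\ef)$ and is therefore finite-dimensional; norm-compactness of $B_{\A}$ then lets it extract a cluster point of the net of approximate peak functions $\{h_{U,\ep}\}$, producing for each $x\in\Ch_{\H}K$ and $e\in E$ an \emph{exact} peak function $h_{x,e}\in B_{\A}$ with $h_{x,e}(x)=1$, $h_{x,e}=0$ on $\Ch_{\H}K\setminus\{x\}$ and $h_{x,e}\otimes e\in\H$ (using that $\H$ is closed), whence surjectivity follows by exact interpolation $f=\sum_x h_{x,f(x)}|_{\Ch_{\H}K}\otimes f(x)$. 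You instead keep the approximate peak functions as they are, show that the range of $\Phi$ is dense by the estimate $\norm{\Phi(h)-(e_1,\dots,e_n)}\le\eta\sum_i\norm{e_i}$, and conclude with the standard fact that the isometric image of the complete space $\H$ is closed, so dense plus closed gives everything. Your version is more elementary (no compactness or cluster-point argument, no need to note finite-dimensionality of $\A$), while the paper's version yields the slightly stronger structural by-product that $\A$ contains genuine indicator-like peak functions on the boundary; for the statement of the lemma itself both arguments are equally adequate. One tiny point worth making explicit in your write-up: the bound $\abs{g_j(x_j)-1}\le\eta$ uses that $g_j(x_j)$ is real with $1-\eta<g_j(x_j)\le 1$ (the convention implicit in the inequality $g_j(x_j)>1-\eta$ of the weak peak point definition); in the complex case, if one only assumed $\Re g_j(x_j)>1-\eta$, the estimate would degrade to $\abs{g_j(x_j)-1}\le\sqrt{2\eta}$, which still suffices.
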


\begin{proof}
Let $\A$ be the canonical scalar function space of $\H$.
First we show that $\H$ is isometrically embedded into $\ell^\infty(\Ch_{\H} K, E)$ by the restriction mapping $r\colon \H\to \ell^\infty(\Ch_{\H} K, E)$. The mapping is an isometry by the maximum principle \cite[Theorem~2.3.8]{fleming-jamison-1}. Indeed, for each $h \in \H$ we have
\[\norm{h}_{\sup}=\sup_{\e \in S_{\E}, x \in K} \abs{\e(h)(x)}=\sup_{\e \in S_{\E}, x \in \Ch_{\H} K} \abs{\e(h)(x)}=\norm{r(h)}.\]
It remains to prove that the mapping $r$ is surjective.
Let $x\in \Ch_{\H} K$ and $e \in E$ be given. We show that there exists a function $h_{x, e}\in B_{\A}$ such that $h_{x, e}(x)=1$, $h_{x, e}=0$ on $\Ch_{\H} K\setminus \{x\}$ and $h_{x, e} \otimes e \in \H$. To this end we consider net $\{h_{U,\ep}\}$ in $B_{\A}$, where $U$ is a neighborhood of $x$, $\ep\in (0,1)$ and $h_{U,\ep}$ is a function satisfying $h_{U,\ep}(x)>1-\ep$, $\abs{h_{U,\ep}}<\ep$ on $\Ch_{\H} K\setminus U$ and $h_{U, \ep} \otimes e \in \H$. We consider the partial order on the set of pairs $(U,\ep)$ given by $(U_1,\ep_1)\le (U_2,\ep_2)$ provided $U_2\subset U_1$ and $\ep_2<\ep_1$. Since we know that $\H \subset \ell^\infty(\Ch_{\H} K, E)$, it follows that the space $\A \subset \ell^\infty(\Ch_{\H} K, \ef)$ is finite-dimensional. Thus $B_{\A}$ is compact in the norm topology, and the net $\{h_{U,\ep}\}$ possesses a cluster point $h_{x, e}\in B_{\A}$.  Then $h_{x, e}(x)=1$ and $h_{x, e}=0$ on $\Ch_{\H} K\setminus \{x\}$. Moreover, since each member of this net satisfies $h_{U,\ep}\otimes e \in \H$ and $\H$ is closed, we obtain that $h_{x, e} \otimes e \in \H$.

Now, any $f\in\ell^\infty(\Ch_{\H} K, E)$ can be written as
\[
f=\sum_{x\in \Ch_{\H} K}h_{x, f(x)}|_{\Ch_{\H} K} \otimes f(x),
\]
and thus $h=\sum_{x\in \Ch_{\H} K} h_{x, f(x)} \otimes f(x) \in \H$ satisfies $r(h)=f$.
\end{proof}

\begin{lemma}
\label{c0}
Let $\H$ be a closed subspace of $\C(K, E)$ such that $\Ch_{\H} K$ is infinite and each point of $\Ch_{\H} K$ is a weak peak point. Then for each $\ep>0$, $\H$ contains a $(1+\ep)$-isomorphic copy of $c_0$.
\end{lemma}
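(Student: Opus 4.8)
The plan is to realize the standard unit vector basis of $c_0$ by a sequence of almost-peak functions concentrated near well-separated points of $\Ch_{\H} K$. Fix once and for all a unit vector $e\in S_E$ (which exists since $\dim E\ge 1$) and, for a given $\ep>0$, a summable sequence $\delta_n>0$ with $\sum_n\delta_n<\ep$. The building blocks will be functions $h_n=g_n\otimes e\in\H$, where $g_n\in B_{\A}$ is produced by the weak peak point property; the key is to arrange each $g_n$ to be essentially supported near a point $x_n$, with the supports pairwise disjoint, so that at every point of $\Ch_{\H} K$ at most one $g_n$ is large.

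First I would isolate the topological input: since $K$ is compact Hausdorff and $\Ch_{\H} K$ is infinite, there are distinct points $x_n\in\Ch_{\H} K$ together with \emph{pairwise disjoint} open sets $U_n\ni x_n$. I expect this to be the main obstacle, because a countable subset of a compact Hausdorff space need not admit pairwise disjoint neighbourhoods unless the points are chosen with care. I would establish it by an inductive ``reservoir'' argument: keeping an open set $W_{n-1}$ with $W_{n-1}\cap\Ch_{\H} K$ infinite and $\overline{U_1},\dots,\overline{U_{n-1}}$ disjoint from $W_{n-1}$, one uses that an infinite subset of a compact space has an $\omega$-accumulation point $p$; picking $x_n\in W_{n-1}\cap\Ch_{\H} K$ with $x_n\ne p$ and invoking normality of $K$, one separates $x_n$ from a neighbourhood $V_n\subset W_{n-1}$ of $p$ by an open $U_n$ with $\overline{U_n}\subset W_{n-1}$ and $\overline{U_n}\cap V_n=\emptyset$, and then sets $W_n=V_n\cap W_{n-1}$, which is again open and meets $\Ch_{\H} K$ in an infinite set. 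Nesting the $W_n$ forces $U_m\subset W_n$ for $m>n$, whence the $U_n$ are pairwise disjoint and the $x_n$ are distinct.

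With the $x_n$ and $U_n$ in hand, I would apply the weak peak point property at each $x_n$ with the vector $e$, the neighbourhood $U_n$ and the parameter $\delta_n$ to obtain $g_n\in B_{\A}$ with $g_n(x_n)>1-\delta_n$, $\abs{g_n}\le\delta_n$ on $\Ch_{\H} K\setminus U_n$ and $g_n\otimes e\in\H$; set $h_n=g_n\otimes e$. The core is then a routine two-sided estimate of $\norm{\sum_n a_n h_n}_{\sup}$ for finitely supported scalars $(a_n)$. Since $\norm{h_n}=\norm{g_n}$ and $g=\sum_n a_n g_n\in\A$, the maximum principle (the identity $\sup_{x\in K}\abs{g(x)}=\sup_{x\in\Ch_{\H} K}\abs{g(x)}$ for $g\in\A$, coming from \cite[Theorem~2.3.8]{fleming-jamison-1} and already used in Lemmas~\ref{repre} and~\ref{l_inf}) reduces the supremum to $\Ch_{\H} K$. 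Pairwise disjointness of the $U_n$ means that at any $x\in\Ch_{\H} K$ at most one $g_n(x)$ can exceed $\delta_n$, yielding the upper bound $\norm{\sum_n a_n h_n}\le(1+\sum_n\delta_n)\norm{(a_n)}_\infty\le(1+\ep)\norm{(a_n)}_\infty$; evaluating at $x_k$ for an index $k$ with $\abs{a_k}=\norm{(a_n)}_\infty$ (so $\abs{g_n(x_k)}\le\delta_n$ for $n\ne k$) gives the lower bound $\norm{\sum_n a_n h_n}\ge(1-\sum_n\delta_n)\norm{(a_n)}_\infty\ge(1-\ep)\norm{(a_n)}_\infty$.

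These uniform bounds show that the partial sums of $\sum_n a_n h_n$ are Cauchy for every $(a_n)\in c_0$, so the map $e_n\mapsto h_n$ extends to an isomorphism of $c_0$ onto $\overline{\span}\{h_n\}\subset\H$ with distortion at most $\tfrac{1+\ep}{1-\ep}$. Finally, given any target $\ep'>0$, running the construction with $\ep$ small enough that $\tfrac{1+\ep}{1-\ep}\le 1+\ep'$ produces a $(1+\ep')$-isomorphic copy of $c_0$ inside $\H$, which is exactly the assertion.
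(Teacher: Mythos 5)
Your proof is correct and follows essentially the same route as the paper: pairwise disjoint open neighbourhoods of points of $\Ch_{\H} K$ chosen so that an infinite ``reservoir'' of the Choquet boundary survives each step, weak peak functions tensored with a fixed unit vector $e\in S_E$, and the two-sided $c_0$-basis estimate obtained from disjointness of supports together with the maximum principle of \cite[Theorem~2.3.8]{fleming-jamison-1}. The only (inessential) difference is the topological bookkeeping: you produce the disjoint sets via an $\omega$-accumulation point and nested reservoirs $W_n$, whereas the paper uses a simpler pigeonhole observation — for two points with disjoint closed neighbourhoods $\ov{U},\ov{V}$, at least one of $\Ch_{\H} K\setminus\ov{U}$, $\Ch_{\H} K\setminus\ov{V}$ must remain infinite — iterated inductively to the same effect.
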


\begin{proof}
Let $\A$ be the canonical scalar function space of $\H$ and let $\ep>0$ be given. First observe that since $\Ch_{\H} K$ is infinite, there exists an open set $U \subset K$ such that $U$ contains a point of $\Ch_{\H} K$ and $\Ch_{\H} K \setminus \ov{U}$ is infinite. Indeed, choose arbitrary distinct points $x, y \in \Ch_{\H} K$ and let $U, V$ be open neighbourhoods of $x$ and $y$, respectively, such that $\ov{U} \cap \ov{V}=\emptyset$. Then if both $\Ch_{\H} K \setminus \ov{U}$ and $\Ch_{\H} K \setminus \ov{V}$ were finite, $\Ch_{\H} K$ would be finite as well. 

 Fix $e \in S_E$. We proceed by induction to find a sequence of functions $\{ f_n\}_{n=1}^\infty \subset B_{\A}$, points $\{ x_n\}_{n=1}^\infty \subset \Ch_{\H} K$ and pairwise disjoint open sets $\{U_n \}_{n=1}^\infty$, such that for each $n \in \en$, $h_n=f_n \otimes e \in \H$, $f_n(x_n)>1-\frac{\ep}{2^n}$, $\abs{f_n} <\frac{\ep}{2^n}$ on $\Ch_{\H} K \setminus U_n$ and $\Ch_{\H} K \setminus \bigcup_{i=1}^n \ov{U_i}$ is infinite.

 First we choose an open set $U_1 \subset K$ such that there exists a point $x_1 \in U_1 \cap \Ch_{\H} K$ and $\Ch_{\H} K \setminus \ov{U_1}$ is infinite. Since $x_1$ is a weak peak point, there exists a function $f_1 \in B_{\A}$ such that $f_1(x_1)>1-\frac{\ep}{2}$, $\abs{f_1}<\frac{\ep}{2}$ on $\Ch_{\H} K \setminus U_1$ and $f_1 \otimes e \in \H$. We denote $h_1=f_1 \otimes e$. Now suppose that $k \in \en$ and we have constructed finite sequences
$\{ h_n\}_{n=1}^k$, $\{ x_n\}_{n=1}^k$ and pairwise disjoint open sets $\{U_n \}_{n=1}^k$.
Then, since $\Ch_{\H} K \setminus \bigcup_{n=1}^k \ov{U_n}$  is infinite, there exists an open set $V_{k+1}$ intersecting $\Ch_{\H} K \setminus \bigcup_{n=1}^k \ov{U_n}$ and such that $(\Ch_{\H} K \setminus \bigcup_{n=1}^k \ov{U_n}) \setminus \ov{V_{k+1}}$ is infinite. Define $U_{k+1}=V_{k+1} \setminus \bigcup_{n=1}^k \ov{U_n}$. Next choose arbitrary $x_{k+1} \in U_{k+1} \cap \Ch_{\H} K$. Since $x_{k+1}$ is a weak peak point, there exists a function $f_{k+1} \in B_{\A}$ such that $f_{k+1}(x_{k+1})>1-\frac{\ep}{2^{k+1}}$, $\abs{f_{k+1}}<\frac{\ep}{2^{k+1}}$ on $\Ch_{\H} K \setminus U_{k+1}$ and $h_{k+1}=f_{k+1} \otimes e \in \H$. This finishes the construction.

Now, choose an arbitrary finite sequence $\{ \alpha_i \}_{i=1}^n$ of scalars. Choose $j \in \lbrace 1, \ldots, n \rbrace$ such that $\abs{\alpha_j}=\max \lbrace \abs{\alpha_i}, i=1, \ldots, n \rbrace$. Then we have

\begin{equation}
\nonumber
\begin{aligned}
&\norm{\sum_{i=1}^n \alpha_i h_i}_{\sup} \geq \abs{\alpha_j}\norm{h_j(x_j)}-\sum_{i \neq j} \abs{\alpha_i} \norm{h_i(x_j)} 
\geq \\& \geq \abs{\alpha_j}\norm{h_j(x_j)}-\abs{\alpha_j} \sum_{i \neq j} \norm{h_i(x_j)}  \geq \abs{\alpha_j} (1-\frac{\ep}{2}-\ep)\geq \abs{\alpha_j}(1-2\ep).
\end{aligned}
\end{equation}

Thus using \cite[Theorem~2.3.8]{fleming-jamison-1} we deduce that
\begin{equation}
\nonumber
\begin{aligned}
&(1-2\ep)\max_{i=1, \ldots, n} \abs{\alpha_i}\leq 
\norm{\sum_{i=1}^{n} \alpha_i h_i}_{\sup}
= \norm{e} \norm{\sum_{i=1}^{n} \alpha_i f_i}_{\sup} 
=\\&=
\sup_{x \in \Ch_{\H} K} \abs{\sum_{i=1}^n \alpha_i f_i(x)}\leq \max_{i=1, \ldots, n} \abs{\alpha_i} \sup_{x \in \Ch_{\H} K} \sum_{i=1}^n \abs{f_i(x)}\leq(1+\ep)\max_{i=1, \ldots, n} \abs{\alpha_i}.
\end{aligned}
\end{equation}
Thus it follows that if we define $T:c_0 \rightarrow \H$ by $T(\{ \alpha_n \}_{n=1}^\infty)=\sum_{n=1}^\infty \alpha_n h_n$, then $T$ is an isomorphism satisfying $\norm{T}\cdot\norm{T^{-1}}\leq \frac{1+\ep}{1-2\ep}$. Since $\ep>0$ is arbitrary, this finishes the proof.
\end{proof}

\begin{prop}
\label{finite}
Let for $i=1, 2$, $\H_i$ be a closed subspace of $\C(K_i, E_i)$ such that each point of $\Ch_{\H_i} K_i$ is a weak peak point. Let $E_1$ does not contain an isomorphic copy of $c_0$. Suppose that $\Ch_{\H_1} K_1$ is finite and $\H_1$ is isomorphic to $\H_2$. Then $\Ch_{\H_2} K_2$ is finite.
\end{prop}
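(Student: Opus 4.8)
The plan is to argue by contradiction, exploiting the dichotomy furnished by Lemmas~\ref{l_inf} and~\ref{c0} together with the hypothesis that $E_1$ omits $c_0$. Suppose that $\Ch_{\H_2} K_2$ is infinite. Since each point of $\Ch_{\H_2} K_2$ is a weak peak point, Lemma~\ref{c0} shows that $\H_2$ contains an isomorphic copy of $c_0$. Because containment of $c_0$ is an isomorphic invariant and $\H_1$ is isomorphic to $\H_2$, the space $\H_1$ must then contain an isomorphic copy of $c_0$ as well.

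On the other hand, $\Ch_{\H_1} K_1$ is finite and all its points are weak peak points, so by Lemma~\ref{l_inf} the space $\H_1$ is isometrically isomorphic to $\ell^{\infty}(\Ch_{\H_1} K_1, E_1)$. Writing $n$ for the (finite) cardinality of $\Ch_{\H_1} K_1$, this target space is precisely the $\ell^{\infty}$-direct sum of $n$ copies of $E_1$, that is, $\H_1 \simeq E_1 \oplus \cdots \oplus E_1$. I would then invoke the stability of the class of Banach spaces not containing $c_0$ under finite direct sums: since $E_1$ does not contain $c_0$ by assumption, neither does this finite sum, and hence $\H_1$ does not contain $c_0$. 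This directly contradicts the conclusion of the previous paragraph and therefore forces $\Ch_{\H_2} K_2$ to be finite.

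The only nonroutine ingredient, and the place where the main work lies, is the stability of the property \emph{``does not contain $c_0$''} under finite direct sums. The cleanest route is the Bessaga--Pe\l czy\'nski characterization: a Banach space omits $c_0$ exactly when every weakly unconditionally Cauchy series in it is unconditionally convergent. Given such a series $\sum_k z_k$ in $E_1 \oplus \cdots \oplus E_1$, applying the bounded coordinate projections produces weakly unconditionally Cauchy series in each copy of $E_1$; each of these converges unconditionally because $E_1$ omits $c_0$, whence $\sum_k z_k$ converges unconditionally coordinatewise. Thus the finite sum omits $c_0$. I expect no genuine obstacle here beyond correctly citing (or briefly reproving) this standard fact; everything else is a direct concatenation of the two lemmas with the isomorphism invariance of $c_0$-containment.
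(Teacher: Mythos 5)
Your proof is correct and structurally the same as the paper's: both argue by contradiction, combining Lemma~\ref{c0} (an infinite Choquet boundary of weak peak points forces an isomorphic copy of $c_0$ inside $\H_2$, hence inside $\H_1$) with Lemma~\ref{l_inf} (finiteness of $\Ch_{\H_1} K_1$ gives $\H_1\simeq \ell^{\infty}(\Ch_{\H_1} K_1, E_1)$, a finite $\ell^{\infty}$-sum of copies of $E_1$). The only divergence is in the last ingredient: the paper passes from ``$\ell_n^{\infty}(E_1)$ contains $c_0$'' to ``$E_1$ contains $c_0$'' by citing \cite[Theorem 1]{Samuel}, whereas you reprove the underlying stability fact --- that omitting $c_0$ is preserved by finite direct sums --- directly from the Bessaga--Pe\l czy\'nski characterization, projecting a weakly unconditionally Cauchy series onto the coordinates. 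That argument is sound: the projected series are weakly unconditionally Cauchy because $x^*\circ P_i$ is a functional on the sum, each converges unconditionally since $E_1$ omits $c_0$, and coordinatewise unconditional convergence yields unconditional convergence of the original series. What your route buys is self-containedness, and it reuses exactly the characterization \cite[Theorem 6.7]{morrison2001functional} that the paper already invokes in the proof of Theorem~\ref{weak Banach-Stone}; what the paper's citation buys is brevity. Either justification is acceptable.
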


\begin{proof}
By Lemma \ref{l_inf} there exists an $n \in \en$ such that $\H_1$, and thus also $\H_2$, is isomorphic to $\ell_n^{\infty}(E_1)$.
 Hence if $\Ch_{\H_2} K_2$ were infinite, then by Lemma \ref{c0}, $\ell_n^{\infty}(E_1)$ would contain an isomorphic copy of $c_0$. Thus by \cite[Theorem 1]{Samuel}, $E_1$ would contain an isomorphic copy of $c_0$ as well, contradicting our assumption. Thus $\Ch_{\H_2} K_2$ is finite.
\end{proof}

Now we embark on the proof of Theorem \ref{weak Banach-Stone}. We recall that a series $\sum_{i=1}^{\infty} e_i$ in a Banach space $E$ is \emph{weakly unconditionally Cauchy} if $\sum_{i=1}^{\infty} \abs{\la \e, e_i \ra}< \infty$ for each $\e \in \E$.

\emph{Proof of Theorem \ref{weak Banach-Stone}.} Using Lemma \ref{comp} we may assume that the spaces $K_1, K_2$ are compact. Moreover, we have to deal only with the case when both $\Ch_{\H_1} K_1$ and $\Ch_{\H_2} K_2$ are infinite by Proposition \ref{finite}.
Let $\pi_1\colon \M(K_1,\E_1)\to \H_1^{*}$ be the restriction mapping and let $\phi_2\colon K_2 \times B_{\E_2} \to B_{\H_2^*}$ be the evaluation mapping. For each $x\in \Ch_{\H_1} K_1$ we consider the function ${\chi}_{\{x\}}$. Let for $u \in S_{E_1}$, $\wh{\chi_{\{x\}}} \otimes u \colon \M(K_1, \E_1)\to \ef$ be defined as in Lemma~\ref{first-borel} and let $a_{x, u}^{**}\in \H_1^{**}$ satisfies $\la a_{x, u}^{**}, \pi_1(\mu) \ra=(\wh{\chi_{\{x\}}} \otimes u)(\mu)$ for $\mu$ carried by $\ov{\Ch_{\H_1} K_1}$ (see Lemma~\ref{faktor}). Then $a_{x, u}^{**}$ is of the first Borel class on $rB_{\H_1^*}$ for any $r>0$, see Lemma~\ref{first-borel} and~\ref{pi}(b).

We show that $\abs{\Ch_{\H_1} K_1} \leq \abs{\Ch_{\H_2} K_2}$. For $y \in \Ch_{\H_2} K_2$ we denote
\[X_y=\lbrace x \in \Ch_{\H_1} K_1: \sup_{u \in S_{E_1}, \t \in S_{\E_2}} \abs{\langle T^{\ast\ast}(a_{x, u}^{\ast\ast}), \phi_2(y, \t)\rangle} >0 \rbrace .\]	
First we show that for each $x \in \Ch_{\H_1} K_1$ there exists $y \in \Ch_{\H_2} K_2$ such that $x \in X_y$. Assuming the contrary, there exists $x \in \Ch_{\H_1} K_1$ such that for all $y \in \Ch_{\H_2} K_2, u \in S_{E_1}$ and $\t \in S_{\E_2}$:
\[ \abs{\langle T^{\ast\ast}(a_{x, u}^{\ast\ast}), \phi_2(y, \t)\rangle} =0.\]	
Then for arbitrary $u \in S_{E_1}$ using Lemma \ref{principle}(i) we have 
\[0=\sup_{y \in \Ch_{\H_2} K_2, \t \in S_{\E_2}} \abs{ \langle T^{\ast\ast}(a_{x, u}^{\ast\ast}), \phi_2(y, \t)\rangle}=\norm{T^{\ast\ast}(a_{x, u}^{**})} > 0,\]
which is a contradiction.

Now we show that for each $y \in \Ch_{\H_2} K_2$, the set $X_y$ is at most countable. Suppose not. Then we fix $y \in \Ch_{\H_2} K_2$ with $X_y$ uncountable.
For $\t \in S_{\E_2}$ we denote by $\M_{\t}$ the set of vector measures $\mu \in \pi_1^{-1}(T^\ast(\phi_2(y, \t))) \cap \M(K_1, \E_1)$ satisfying $\norm{\mu}=\norm{T^\ast(\phi_2(y, \t))}$, and carried by $\ov{\Ch_{\H_1} K_1}$. Each such measure satisfies $\norm{\mu} \leq \norm{T^*}=\norm{T}$.
Since the set $X_y$ is uncountable, there exists an $\ep>0$ such that the set 
\[X_y^{\ep}=\{x \in \Ch_{\H_1} K_1 \exists \t \in S_{\E_2} \exists u \in S_{E_1}:\abs{\langle T^{\ast\ast}(a_{x, u}^{\ast\ast}), \phi_2(y, \t)\rangle} >\ep \}\]
is infinite.

By \eqref{duality} we know that for each $\mu \in \M_{\t}$ it holds that 
\[\la \mu(\{x\}), u \ra=\langle T^{\ast\ast}(a_{x, u}^{\ast\ast}), \phi_2(y, \t)\rangle,\] 
and thus we see that the set $X_y^{\ep}$ coincides with
\begin{equation}
\nonumber
\lbrace x \in \Ch_{\H_1} K_1 \exists \t \in S_{\E_2} \forall \mu \in \M_{\t}: \norm{\mu(\lbrace x \rbrace)} > \ep\rbrace.
\end{equation}

By the proof of Lemma \ref{c0}, there exists a sequence $\{ x_n\}_{n=1}^{\infty} \subset X_y^{\ep}$ and a sequence $\{U_n\}_{n=1}^{\infty}$ of pairwise disjoint open subsets of $K_1$ such that $x_n \in U_n$ for each $n \in \en$. For each $n \in \en$, since $x_n \in X_y^{\ep}$, there exists $\t_n \in S_{\E_2}$, $\mu_n \in \M_{\t_n}$ and $u_n \in S_{E_1}$ such that $\abs{\la \mu_n(\{x_n\}), u_n \ra }> \ep$. Now we find $n_0 \in \en$ such that 
\[ \frac{1}{2^{n_0}}\norm{T}<\ep(\frac{1}{2}-\frac{1}{2^{n_0}}).\]

By making the sets $U_n$ smaller if necessary, we may further assume that
\begin{equation}
\label{reg}
\abs{\la \mu_n, u_n \ra}(U_n \setminus \{ x_n \}) < \ep(\frac{1}{2}-\frac{1}{2^{n_0}})-\frac{1}{2^{n_0}}\norm{T}
\end{equation}
 for each $n \in \en$.
Moreover, for each $n \in \en$ we can find a function $f_n \in B_{\A_1}$, where $\A_1$ is the canonical scalar function space of $\H_1$, such that $h_n=f_n \otimes u_n \in \H_1$,
 \[ f_n(x_n)>
1-\frac{1}{2^{n_0+n}} \quad \text{and} \quad \abs{f_n} < \frac{1}{2^{n_0+n}} \quad \text{on} \quad \Ch_{\H_1} K_1 \setminus U_n. \] 

Now we claim that the series $ \sum_{n=1}^{\infty}Th_n (y)$ in $E_2$ is weakly unconditionally Cauchy. To this end, first observe that for each $n \in \en$ and scalars $\alpha_1, \ldots \alpha_n \in S_{\ef}$  we have 
by \cite[Theorem~2.3.8]{fleming-jamison-1} that 
\[ \norm{\sum_{i=1}^n \alpha_i h_i}_{\sup} \leq \max_{i=1, \ldots, n} \norm{u_i} \sup_{x \in \Ch_{\H_1} K_1} \sum_{i=1}^n \abs{\alpha_i f_i(x)} <2.\]

For a given $v^* \in S_{\E_2}$, we choose an arbitrary $\mu \in \M_{v^*}$, and for each $n \in \en$ we find $\alpha_1, \ldots \alpha_n \in S_{\ef}$ such that $\abs{\la \mu, h_i \ra}=\alpha_i \la \mu, h_i \ra$ for each $i=1, \ldots, n$. Then we have
\begin{equation}
\nonumber
\begin{aligned}
&\sum_{i=1}^n \abs{ \la v^*, Th_i(y) \ra}=\sum_{i=1}^n \abs{ \la \phi_2(y, v^*), Th_i \ra}=
\sum_{i=1}^n \abs{ \la T^*\phi_2(y, v^*), h_i \ra}
=\\&=
\sum_{i=1}^n \abs{ \la \mu, h_i \ra}=\sum_{i=1}^n \alpha_i \la \mu, h_i \ra=\la \mu, \sum_{i=1}^n \alpha_i h_i \ra  \leq
 \norm{\mu} \norm{\sum_{i=1}^n \alpha_i h_i} \leq 2 \norm{\mu}.
\end{aligned}
\end{equation}

Thus also $\sum_{i=1}^{\infty} \abs{ \la v^*, Th_i(y) \ra} \leq 2\norm{\mu} < \infty$, and the series is weakly unconditionally Cauchy.

Now we show that the norms of the members of the series are uniformly bounded away from zero. For each $i \in \en$ we have 

 \begin{equation}
\nonumber
\begin{aligned}
&\norm{Th_i(y)} \geq \abs{\la \t_i, T(f_i \otimes u_i)(y) \ra}=\abs{\la \phi_2(y, \t_i), T(f_i \otimes u_i) \ra}
=\\&=
\abs{\la T^*(\phi_2(y, \t_i)), f_i \otimes u_i \ra}=\abs{\la \mu_i, f_i \otimes u_i \ra}=\abs{\int_{K_1} f_i~d\la \mu_i, u_i \ra}
\geq \\& \geq 
\int_{\lbrace x_i \rbrace} f_i~d\la \mu_i, u_i \ra-\abs{\int_{U_i \setminus \lbrace x_i \rbrace} f_i~d\la \mu_i, u_i \ra}-\abs{\int_{\ov{\Ch_{\H_1} K_1} \setminus U_i} f_i~d\la \mu_i, u_i \ra}
> \\& >
(1-\frac{1}{2^{n_0+i}})\ep-\abs{\la \mu_i, u_i \ra} (U_i \setminus \{ x_i \})-\frac{1}{2^{n_0+i}}\norm{\mu_i} \geq \\& \geq 
(1-\frac{1}{2^{n_0}})\ep-\abs{\la \mu_i, u_i \ra} (U_i \setminus \{ x_i \})-\frac{1}{2^{n_0}}\norm{T}
\geq^{\eqref{reg}} \frac{\ep}{2}.
\end{aligned}
\end{equation}
Thus we obtained that the series $ \sum_{n=1}^{\infty}T(h_n) (y)$ in $E_2$ is weakly unconditionally Cauchy, but $\inf_{n \in \en} \norm{T(h_n) (y)}>0$. This by \cite[Theorem 6.7]{morrison2001functional} means that $E_2$ contains an isomorphic copy of $c_0$, contradicting our assumption. This contradiction shows that $X_y$ is at most countable for all $y \in \Ch_{\H_2} K_2$. 

Thus, since we know that
\[\Ch_{\H_1} K_1 =\bigcup_{y \in \Ch_{\H_2} K_2} X_y,\]
we conclude that $\abs{\Ch_{\H_1} K_1} \leq \abs{\Ch_{\H_2} K_2}$. By reversing the role of $\Ch_{\H_1} K_1$ and $\Ch_{\H_2} K_2$ we obtain the reverse inequality, which concludes the proof.

%%%%%%%%%%%%%%%%%%%%%%%%%%%%%%%%%%%%%%%%%%%%%%%%%%%%%%%%%%%%%%%

\bibliography{iso-functions}\bibliographystyle{siam}
\end{document}